\documentclass[12pt, oneside, psamsfonts]{amsart}

\newif\ifPDF
\ifx\pdfoutput\undefined\PDFfalse
\else \ifnum \pdfoutput > 0 \PDFtrue
        \else \PDFfalse
        \fi
\fi

\usepackage[centertags]{amsmath}
\usepackage{amsfonts}
\usepackage{mathrsfs}
\usepackage{textcomp}
\usepackage{amssymb}
\usepackage{amsthm}
\usepackage{newlfont}
\usepackage[all]{xy}


\ifPDF
  \usepackage[pdftex]{color, graphicx}
  \usepackage[pdftex, bookmarks, colorlinks]{hyperref}
  \hypersetup{colorlinks=false}


\else
  \usepackage{color}
  \usepackage[dvips]{graphicx}
  \usepackage[dvips]{hyperref}
\fi


\usepackage[scale=0.8]{geometry}



\usepackage[pagewise, mathlines, displaymath]{lineno}

\newtheorem{thm}{Theorem}[section]
\newtheorem{cor}[thm]{Corollary}
\newtheorem{lem}[thm]{Lemma}

\theoremstyle{definition}
\newtheorem{defn}[thm]{Definition}
\theoremstyle{remark}
\newtheorem{rem}[thm]{Remark}

\numberwithin{equation}{section}

\newcommand{\norm}[1]{\left\Vert#1\right\Vert}
\newcommand{\abs}[1]{\left\vert#1\right\vert}

\newcommand{\Real}{\mathbb R}
\newcommand{\Int}{\mathbb Z}
\newcommand{\Comp}{\mathbb C}
\newcommand{\Ratn}{\mathbb Q}
\newcommand{\eps}{\varepsilon}

\newcommand{\Kzero}{\textrm{K}_0}
\newcommand{\Kone}{\textrm{K}_1}
\newcommand{\tr}{\mathrm{T}}
\newcommand{\MC}[2]{\mathrm{M}_{#1}(\mathrm{C}(#2))}
\newcommand{\MCO}[2]{\mathrm{M}_{#1}(\mathrm{C}_0(#2))}
\newcommand{\Mat}[1]{\mathrm{M}_{#1}(\mathbb C)}
\newcommand{\aff}{\mathrm{Aff}}


\begin{document}


\title{The classification of simple separable unital locally ASH algebras}

\author{George A. Elliott}
\address{Department of Mathematics, University of Toronto, Toronto, Ontario, Canada~\ M5S 2E4.}
\email{elliott@math.toronto.edu}

\author{Guihua Gong}
\address{Department of Mathematics, University of Puerto Rico.}
\email{ghgong@gmail.com}

\author{Huaxin Lin}
\address{University of Oregon and East China Normal University.}
\email{hlin@uoregon.edu}

\author{Zhuang Niu}
\address{Department of Mathematics, University of Wyoming, Laramie, WY, 82071, USA.}
\email{zniu@uwyo.edu}

\date{\today}


\begin{abstract}
Let $A$ be a simple separable unital locally approximately subhomogeneous C*-algebra (locally ASH algebra). It is shown that $A\otimes Q$ can be tracially approximated by unital Elliott-Thomsen algebras with trivial $\Kone$-group, where $Q$ is the universal UHF algebra. In particular, it follows that $A$ is classifiable by the Elliott invariant if $A$ is Jiang-Su stable.
\end{abstract}

\maketitle

\setcounter{tocdepth}{1}

\section{Introduction}
Recently, several major steps have been taken in the classification of what might be called ``well behaved" separable amenable simple unital C*-algebras. The phenomenon of well behavedness itself was explicitly noticed only relatively recently, by Toms and Winter (see \cite{E-Toms-BAMS}) who conjectured that within this class of C*-algebras several properties were equivalent, and that the algebras in this robust subclass (and only these) were the ones that could be classified by means of what might be called the naive Elliott invariants---the ordered $\Kzero$-group together with the class of the unit, the simplex of tracial states paired naturally with it, and the $\Kone$-group. (Other invariants, such as the Cuntz semigroup, might then be helpful for more general classes of amenable C*-algebras.) Breakthroughs in the understanding of the robustness of this class were made by Matui and Sato in \cite{Matui-Sato-CP} and \cite{Matui-Sato-DR}.

Perhaps the most striking development in the direction of actually proving isomorphism has been the technique---sometimes referred to as the Winter deformation technique---introduced by Winter in \cite{Winter-Z} (with refinements by Lin in \cite{Lin-App} and Lin and Niu in \cite{L-N}), through which a class of (separable, amenable, simple unital) C*-algebras which are well behaved in the sense of absorbing tensorially the Jiang-Su algebra $\mathcal Z$, and are also known to satisfy the UCT, can be classified in terms of the (naive) Elliott invariant if this is true for the subclass of algebras absorbing the universal Glimm UHF C*-algebra $Q$.

Using this, Gong, Lin, and Niu, in \cite{GLN-TAS}---following on earlier work in this direction (see e.g. \cite{LinTAI}, \cite{Niu-TAS-I},  \cite{Niu-TAS-II}, \cite{Niu-TAS-III}, \cite{Lin-Asy}, \cite{Winter-Z}, \cite{Lin-App}, \cite{L-N}, \cite{Lnclasn})---have achieved a classification of finite algebras in this well behaved class which is close to being definitive---it is simple to describe and it exhausts completely the possible values of the invariant. (The complementary class, the infinite algebras in the well behaved class under consideration, were dealt with some time ago by Kirchberg and Phillips---\cite{Kirch-infty}, \cite{KP0}, \cite{Ph1}.)

Unfortunately, while it is believed that all well behaved finite separable amenable simple unital C*-algebras may be ASH algebras (inductive limits of subalgebras of full matrix algebras over commutative C*-algebras)---and, indeed, that the algebra need not be assumed to be well behaved if in addition matrix algebras over it are also finite in the Murray-von Neumann sense (i.e., if the algebra is stably finite)---the class considered by Gong, Lin, and Niu does not on the face of it include the class of all well behaved---``Jiang-Su stable"---simple unital ASH algebras.

Using the recent result of Santiago, Tikuisis, and the present authors (\cite{ENST-ASH}) that any Jiang-Su stable simple unital ASH algebra has finite nuclear dimension (one of the Toms-Winter well behavedness properties---the important concept of nuclear dimension was introduced by Winter and Zacharias in \cite{WZ-ndim})---and also using the notion of non-commutative cell complex introduced in \cite{ENST-ASH} in the proof of this result---, the present note shows that indeed such an algebra (Jiang-Su stable simple unital ASH) belongs to the class dealt with by Gong, Lin, and Niu. (Even if the ASH algebra has slow dimension growth, so that by \cite{Toms-SDG} and \cite{Winter-Z-stable-02} it is indeed Jiang-Su stable, it is not known to belong to the class studied by Gong, Lin, and Niu---the class of rationally tracially approximately point--line algebras---see below.)

\subsection*{Acknowledgements}
The research of G.~A.~E.~was supported by a Natural Sciences
and Engineering Research Council of Canada (NSERC) Discovery Grant, the research of G.~G.~was supported by an NSF Grant, the research of H.~L.~was supported by an NSF Grant, and the research of Z.~N.~was supported by a Simons Foundation Collaboration Grant.
Z.~N.~also thanks Aaron Tikuisis for his comments on the early version of the paper.

\section{Noncommutative cell complexes}

\begin{defn}[Definition 2.1 of \cite{ENST-ASH}]
The class of noncommutative cell complexes (NCCC) is the smallest class $\mathcal C$ of C*-algebras such that
\par (1) every finite dimensional algebra is in $\mathcal C$; and
\par (2) if $B\in \mathcal C$, $n\in\mathbb N$, $\varphi: B\to \MC{k}{S^{n-1}}$ is a unital homomorphism, and $A$ is given by the pullback diagram
\begin{displaymath}
\xymatrix{
A\ar[r] \ar[d] & \MC{k}{D^n} \ar[d]^{f\to f|_{S^{n-1}}}\\
B \ar[r]_-{\varphi} & \MC{k}{S^{n-1}},
}
\end{displaymath}
then $A\in\mathcal C$.
\end{defn}

The reason we consider NCCCs is as follows: 
\begin{thm}[Theorem 2.15 of \cite{ENST-ASH}]
Let $A$ be a unital subhomogeneous C*-algebra. Then $A$ can be locally approximated by sub-C*-algebras which are NCCCs.
\end{thm}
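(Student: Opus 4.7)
The plan is to combine Phillips's structure theorem for separable unital subhomogeneous C*-algebras with the approximation of compact metric spaces by finite CW complexes. By Phillips, $A$ can be written as a recursive subhomogeneous (RSH) algebra, that is, an iterated pullback $A=A_\ell$ with
\[
A_k = A_{k-1}\oplus_{\MC{n_k}{Y_k}}\MC{n_k}{X_k},
\]
where each $X_k$ is a compact metric space, $Y_k\subset X_k$ is closed, and $A_0$ is finite dimensional. I would argue by induction on the number of stages $\ell$; the base case is immediate since every finite-dimensional algebra is an NCCC by clause~(1) of the definition.

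For the inductive step, fix a finite set $F\subset A_\ell$ and $\eps>0$, and apply the inductive hypothesis to produce an NCCC subalgebra $B\subset A_{\ell-1}$ which $\eps$-contains the image of $F$ in $A_{\ell-1}$. Next, writing the compact metric pair $(X_\ell, Y_\ell)$ as an inverse limit of finite CW pairs $(K_m,L_m)$, I would choose $m$ so large that: (a) the $\MC{n_\ell}{X_\ell}$-components of the elements of $F$ are $\eps$-close to the subalgebra $\MC{n_\ell}{K_m}\subset\MC{n_\ell}{X_\ell}$; and (b) the composite $B\hookrightarrow A_{\ell-1}\to\MC{n_\ell}{Y_\ell}$ is $\eps$-close on a fixed generating set of $B$ to a homomorphism with values in $\MC{n_\ell}{L_m}$. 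Forming the pullback
\[
\xymatrix{
B'\ar[r]\ar[d] & \MC{n_\ell}{K_m}\ar[d]\\
B\ar[r] & \MC{n_\ell}{L_m}
}
\]
produces an algebra $B'$ which, after a small perturbation absorbed into $\eps$, sits as a sub-C*-algebra of $A_\ell$ that $\eps$-contains $F$. Since $K_m$ is built from $L_m$ by attaching finitely many cells $D^d$ along maps $S^{d-1}\to L_m^{(d-1)}$, the algebra $\MC{n_\ell}{K_m}$ is obtained from $\MC{n_\ell}{L_m}$ by a corresponding finite sequence of NCCC pullbacks, and hence $B'$ is itself an NCCC by iterated application of clause~(2) of the definition, starting from the NCCC $B$.

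The main obstacle is step~(b): converting the approximate factorization of the gluing map $B\to\MC{n_\ell}{Y_\ell}$ through $\MC{n_\ell}{L_m}$ into an exact one, and then realising the resulting pullback as a genuine sub-C*-algebra of $A_\ell$. The first part relies on a semiprojectivity argument, applied either to the inductively constructed NCCC $B$ or to the relative CW inclusion $\MC{n_\ell}{L_m}\hookrightarrow\MC{n_\ell}{K_m}$, combined with the freedom to refine the CW approximation index $m$ arbitrarily; the second part is a routine perturbation whose error can be absorbed into $\eps$. Careful bookkeeping of the constants through the induction—so that the errors introduced at each of the finitely many stages remain controlled by the initial choice of $\eps$—will then complete the proof.
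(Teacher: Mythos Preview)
The paper does not contain a proof of this statement; it is quoted verbatim as Theorem 2.15 of \cite{ENST-ASH} and used as a black box, so there is no proof here to compare against. Your outline---Phillips's recursive subhomogeneous decomposition, inverse-limit approximation of the base pairs $(X_k,Y_k)$ by finite CW pairs, and a semiprojectivity argument to rectify the gluing map---is indeed the strategy pursued in the cited reference, and the obstacle you single out in step~(b) is precisely the technical crux there. One point to watch: Phillips's structure theorem requires separability, which is not stated in the hypothesis here, so either a reduction to the separable case or a direct appeal to the more general formulation in \cite{ENST-ASH} is needed; and the semiprojectivity input you invoke for NCCCs is itself a nontrivial result established in that same paper, so your sketch is not self-contained without it.
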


\begin{defn}
Let $A$ be an NCCC, and fix an NC cell complex decomposition of $A$ with length $l$ (in the sense that $A$ is built from a finite dimensional C*-algebra $A_0$ by attaching $l$ noncommutative cells). Assume that $A_0=\Mat{s_1}\oplus\cdots\oplus\Mat{s_{r}}$ for some natural number $r$,  and denote the attached cell at the $i$-th step by $\MC{k_i}{D^{n_i}}$. 

Consider the spaces
$$\underbrace{\{\mathrm{pt}\}, ..., \{\mathrm{pt}\}}_r, D^{n_1}, ..., D^{n_l},$$
and denote them by
$X_1, X_2, ..., X_m$, where $m=r+l$. Denote the matrix orders of the corresponding C*-algebras by
$$d_1, ..., d_r, d_{r+1}, ..., d_m.$$
Then there is a standard embedding
$$ \Pi: A\to \bigoplus_{i=1}^m \MC{d_i}{X_i}.$$ 
Denote by $\Pi_i: A\to \MC{d_i}{X_i}$, $i=1, ..., m$, the projection of $\Pi$ onto the $i$-th direct summand.
\end{defn}

\begin{lem}\label{decp-tr-nccc}
Let $A= B\oplus_{\MC{k}{S^{n-1}}} \MC{k}{D^n}$ be an NCCC, and let $\tau\in \tr(A)$. Then there is a decomposition
$$\tau(f_B, f_D)=\alpha \tau_B(f_B)+\beta \int_{D^n\setminus S^{n-1}} \mathrm{tr}(f_D(x)) \mathrm{d}\mu(x),\quad (f_B, f_D)\in A,$$
where $\tau_D\in\tr(B)$, $\mu$ is a probability measure on $D^n\setminus S^{n-1}$, $\mathrm{tr}$ is the standard trace of $\Mat{k}$, and $\alpha, \beta\in[0, 1]$ with $\alpha+\beta=1$. Moreover, $\alpha$ and $\beta$ are unique and $\tau_B$ (or $\mu$) is unique if $\alpha$ (or $\beta$) is not zero.
\end{lem}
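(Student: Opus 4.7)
The structure of $A$ provides a short exact sequence
$$0 \to I \to A \xrightarrow{\pi_B} B \to 0,$$
where $I = \{(0, f_D) \in A : f_D|_{S^{n-1}} = 0\} \cong \MCO{k}{D^n \setminus S^{n-1}}$ and $\pi_B$ is the projection onto $B$. My plan is to decompose $\tau \in \tr(A)$ canonically into a piece supported on $I$ and a piece factoring through $B$, and then to identify each piece concretely.

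For the splitting, I would take an approximate unit $(e_\lambda) \subset I$ and set $\tau_I(a) := \lim_\lambda \tau(a e_\lambda)$. Passing to the enveloping von Neumann algebra, where $I^{**} = z A^{**}$ for a central projection $z$ and $\tau_I(a) = \tau^{**}(az)$, one checks that this limit defines a positive tracial linear functional on $A$ agreeing with $\tau$ on $I$. Then $\tau - \tau_I$ is a positive tracial functional on $A$ that vanishes on $I$, hence descends to a positive tracial functional on $A/I \cong B$. Setting $\beta := \tau_I(1_A)$ and $\alpha := 1 - \beta$ yields $\alpha, \beta \in [0,1]$ with $\alpha + \beta = 1$.

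To identify the pieces, I would invoke the standard fact that positive tracial functionals on $\MCO{k}{Y}$ for $Y$ locally compact Hausdorff correspond bijectively to finite positive Borel measures $\nu$ on $Y$ via $f \mapsto \int_Y \mathrm{tr}(f(x)) \, d\nu(x)$---a consequence of the uniqueness of the normalized trace on $\Mat{k}$ and the Riesz representation theorem. Applied to $I$, this produces a unique finite positive measure $\nu$ on $D^n \setminus S^{n-1}$ realizing $\tau_I|_I$, with $\beta = \nu(D^n \setminus S^{n-1})$. The same integral formula extends to a tracial functional on all of $A$ (depending only on $f_D$ and tracial because $\mathrm{tr}$ is), and must coincide with $\tau_I$ by agreement on $I$. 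Writing $\nu = \beta \mu$ with $\mu$ a probability measure when $\beta > 0$, and (when $\alpha > 0$) letting $\tau_B$ be $\alpha^{-1}$ times the positive tracial functional on $B$ induced by $\tau - \tau_I$, gives the claimed decomposition.

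Uniqueness is then immediate: $\tau|_I$ determines $\nu$, hence $\beta$ and (when $\beta > 0$) $\mu$, while the residual functional on $A/I \cong B$ determines $\alpha$ and (when $\alpha > 0$) $\tau_B$. I do not expect a substantial obstacle here; the only step requiring some care is the traciality of the approximate-unit limit $\tau_I$, which reduces to the fact that the ideal $I^{**} \subset A^{**}$ is generated by a central projection, a standard property of weak-$*$ closed two-sided ideals in von Neumann algebras.
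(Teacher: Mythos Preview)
Your proof is correct and takes a genuinely different route from the paper's.

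The paper defines $\tilde\tau(f,g) = \tau(f,g) - \beta\int \mathrm{tr}(g)\,d\mu$ directly and then proves \emph{by hand} that $\tilde\tau$ is positive: it introduces cutoff functions $\chi_\eta$ supported near $S^{n-1}$, shows $\tilde\tau(f,g)=\tilde\tau(f,g_\eta)$, and then splits into the cases $\inf_\eta \tau(f,g_\eta)=0$ and $>0$ to conclude $\tilde\tau\ge 0$. Only after this does it observe that $\tilde\tau$ kills $I$ and hence factors through $B$.

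You instead pass to the bidual and use the central projection $z$ with $I^{**}=zA^{**}$, so that $\tau_I:=\tau^{**}(\,\cdot\,z)$ and $\tau-\tau_I=\tau^{**}(\,\cdot\,(1-z))$ are \emph{automatically} positive tracial functionals. This replaces the paper's explicit positivity computation by a one-line appeal to standard von Neumann algebra structure, at the price of invoking that machinery; the paper's argument is more elementary but longer.

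One sentence deserves a word of justification: ``must coincide with $\tau_I$ by agreement on $I$'' is not true for arbitrary positive functionals agreeing on an ideal. What makes it work here is that $\tau_I$ factors through multiplication by $z$: for $a=(f_B,f_D)$ and an approximate unit $e_\lambda=(0,h_\lambda)$ of $I$ one has
\[
\tau_I(a)=\lim_\lambda \tau(ae_\lambda)=\lim_\lambda \int_{D^n\setminus S^{n-1}} \mathrm{tr}(f_D h_\lambda)\,d\nu=\int_{D^n\setminus S^{n-1}} \mathrm{tr}(f_D)\,d\nu
\]
by dominated convergence, which is exactly your integral functional. With this line added, the argument is complete.
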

\begin{proof}
The uniqueness part of the lemma is clear. Let us show the existence part.
 
Consider the restriction of $\tau$ to $I:=\MCO{k}{D^n\setminus S^{n-1}}\subseteq A$. Then it is a trace with norm at most one, and thus there is $\beta\in[0, 1]$ and a probability measure $\mu$ on $D^n\setminus S^{n-1}$ such that
$$\tau((0, g))=\beta\int_{D^n\setminus S^{n-1}} \mathrm{tr}(g(x)) \mathrm{d}\mu(x),\quad g\in  \MCO{k}{D^n\setminus S^{n-1}}.$$

Define a linear map $\tilde{\tau}: A\to \Comp$ by
$$\tilde{\tau}((f, g))=\tau(f, g)-\beta\int_{D^n\setminus S^{n-1}} \mathrm{tr}(g(x)) \mathrm{d}\mu(x),\quad (f, g)\in A.$$
For each $g\in\MC{k}{D^n}$ and any $\eta\in(0, 1)$, define  
$$\chi_\eta: [0, 1]\ni x\mapsto \left\{
\begin{array}{ll}
1, & x\in[1-\eta/2, 1], \\
\textrm{linear}, & x\in[1-\eta, 1-\eta/2],\\
0, & x\in[0, 1-\eta];
\end{array}
\right.
$$
and
$$g_\eta(x)=g(x)\chi_\eta(\norm{x}).$$
Then a direct calculation shows that
\begin{equation}\label{cut-inv}
\tilde{\tau}(f, g)=\tilde{\tau}(f, g_\eta),\quad (f, g)\in A,\ \eta\in (0, 1).
\end{equation}

It is clear that $\tilde{\tau}$ is self-adjoint; let us show that it is positive. Let $(f, g)\in A$ be positive. Define
$$\delta=\inf\{\tau(f, g_\eta): \eta\in(0, 1)\}.$$

If $\delta=0$, let us show that $\tilde{\tau}(f, g)=0$. Indeed, in this case, one has
$$\tau((f, g))=\tau(f, g_\eta)+\tau(0, g-g_\eta)$$
and hence
\begin{equation}\label{approx-tr-sup}
\tau((f, g))=\sup\{\tau(0, g-g_\eta): \eta\in(0, 1)\}=\sup\{\beta\int_{D^n\setminus S^{n-1}} \mathrm{tr}_x(g-g_\eta)\mathrm{d}\mu(x): \eta\in(0, 1)\}.
\end{equation}
Note that for any $\eta\in(0, 1)$.
\begin{eqnarray*}
\tilde{\tau}((f, g)) & = & \tau(f, g)-\beta\int_{D^n\setminus S^{n-1}} \mathrm{tr}(g(x)) \mathrm{d}\mu(x) \\
& = & \tau(f, g)-\beta\int_{D^n\setminus S^{n-1}} \mathrm{tr}_x(g-g_\eta) \mathrm{d}\mu(x) \\
&&+\beta\int_{D^n\setminus S^{n-1}} \mathrm{tr}(g_\eta(x)) \mathrm{d}\mu(x),
\end{eqnarray*}
and since $\mu$ is a probability measure, the integral $\beta\int_{D^n\setminus S^{n-1}} \mathrm{tr}(g_\eta(x)) \mathrm{d}\mu(x)$ is arbitrarily small if $\eta$ is small enough. By \eqref{approx-tr-sup}, one has that $\tilde{\tau}((f, g))=0$.

If $\delta>0$, since $\mu$ is a probability measure, there is $\eta\in(0, 1)$ such that 
$$\beta\int_{D^n\setminus S^{n-1}} \mathrm{tr}(g_\eta(x))\mathrm{d}\mu(x)<\delta/2,$$
and therefore
$$\tilde{\tau}((f, g))=\tilde{\tau}((f, g_\eta))= \tau(f, g_\eta)-\beta\int_{D^n\setminus S^{n-1}} \mathrm{tr}(g_\eta(x)) \mathrm{d}\mu(x)\geq-\delta/2= \delta/2>0.$$

Therefore, one always has $\tilde{\tau}((f, g))\geq 0$, and so $\tilde{\tau}$ is a positive linear functional. Therefore $\tilde{\tau}$ is a (positive) trace of $A$. Note that $\tilde{\tau}(I)=0$, and therefore $\tilde{\tau}$ factors through $A/I\cong B$, and hence in fact is a trace of $B$. Therefore, there are $\alpha\in[0, 1]$ and $\tau_B\in\tr(B)$ such that $$ \tau(f, g)-\beta\int_{D^n\setminus S^{n-1}} \mathrm{tr}(g(x)) \mathrm{d}\mu(x)=\tilde{\tau}(f, g)=\alpha\tau_B(f),\quad  (a, b)\in A,$$ 
as desired.
\end{proof}

\begin{cor}\label{tr-measure}
Let $A$ be an NCCC with a given decomposition with length $l$. 
Then any trace $\tau$ of $A$ has a decomposition
$$\tau=\alpha_1\tau_1+\alpha_1\mu_1+\cdots+\alpha_m\mu_m,$$
where $m=\mathrm{rank}(\Kzero(A_0))+l$, $\mu_i$ is a probability measure on $D^{n_i}\setminus S^{n_i-1}$ if $X_i=D^{n_i}$, and $\mu_i$ is the Dirac measure if $X_i$ consists of a point, $\alpha_i\in[0, 1]$ and $\alpha_1+\alpha_2+\cdots+\alpha_m=1$. Moreover, the coefficients $\alpha_i$ are unique.
\end{cor}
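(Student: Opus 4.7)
The plan is to prove the corollary by induction on the length $l$ of the NCCC decomposition, with Lemma \ref{decp-tr-nccc} playing the role of the one-cell step. For the base case $l=0$, the algebra is finite dimensional, $A=A_0=\Mat{s_1}\oplus\cdots\oplus\Mat{s_r}$, and $m=r=\mathrm{rank}(\Kzero(A_0))$; here every trace on $A$ is automatically a convex combination $\sum_{i=1}^r \alpha_i \mathrm{tr}_i$ of the normalized matrix traces on the summands, and each $\mathrm{tr}_i$ is the integral of the standard trace on $\Mat{s_i}$ against the Dirac measure on the one-point space $X_i$, so the conclusion is immediate and the $\alpha_i$ are clearly unique.

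For the inductive step I would fix an NCCC decomposition $A = B \oplus_{\MC{k_l}{S^{n_l-1}}} \MC{k_l}{D^{n_l}}$ with $B$ an NCCC of length $l-1$, and apply Lemma \ref{decp-tr-nccc} to $\tau\in\tr(A)$, obtaining unique $\alpha,\beta\in[0,1]$ with $\alpha+\beta=1$, a probability measure $\mu_m$ on $D^{n_l}\setminus S^{n_l-1}$, and a tracial state $\tau_B$ on $B$ with
$$\tau(f_B,f_D) = \alpha\tau_B(f_B) + \beta\int_{D^{n_l}\setminus S^{n_l-1}} \mathrm{tr}(f_D(x))\,\mathrm{d}\mu_m(x).$$
I then apply the inductive hypothesis to $\tau_B$ on $B$ (indexed by the cells $X_1,\ldots,X_{m-1}$) to get $\tau_B = \gamma_1\mu_1+\cdots+\gamma_{m-1}\mu_{m-1}$ of the required form with unique $\gamma_i\in[0,1]$ summing to $1$, and set $\alpha_i=\alpha\gamma_i$ for $i<m$ and $\alpha_m=\beta$; this yields the claimed decomposition with $\sum_{i=1}^m\alpha_i=\alpha+\beta=1$.

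Uniqueness of the $\alpha_i$ follows by unrolling the same induction: $\alpha_m=\beta$ is forced because it equals the total mass of $\tau$ restricted to the ideal $I=\MCO{k_l}{D^{n_l}\setminus S^{n_l-1}}$, which is determined by $\tau$; if $\alpha_m<1$ then $\alpha>0$, so $\tau_B$ is uniquely determined by the uniqueness clause of the lemma, and the inductive uniqueness of the $\gamma_i$ gives unique $\alpha_i=\alpha\gamma_i$, while if $\alpha_m=1$ the remaining coefficients are forced to be zero. The main thing to be careful about is bookkeeping -- threading the normalization through the induction and ensuring that the order in which cells are attached is consistent with the indexing fixed in the statement -- but no further analytic work is needed beyond what is already contained in Lemma \ref{decp-tr-nccc}.
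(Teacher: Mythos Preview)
Your proof is correct and is exactly the intended argument: the paper states this as an immediate corollary of Lemma~\ref{decp-tr-nccc} without giving a separate proof, and the natural way to extract it is precisely the induction on the length $l$ that you spell out. The bookkeeping you flag (threading the normalization and indexing through the induction) is all that is needed.
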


\begin{defn}
Let $A$ be an NCCC with a given decomposition, and let $\tau\in\tr(A)$. 
Referring to Corollary \ref{tr-measure}, define $$\alpha_i(\tau)=\alpha_i.$$
\end{defn}

\begin{lem}\label{fg-K}
Let $A$ be a noncommutative cell complex (NCCC). Then the K-groups  of $A$ are finitely generated (as abelian groups).
\end{lem}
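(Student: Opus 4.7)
My plan is to prove this by induction on the length $l$ of an NC cell complex decomposition of $A$. The base case $l=0$ is immediate: $A = A_0$ is finite dimensional, so $\Kzero(A_0)\cong\Int^r$ is free abelian of finite rank and $\Kone(A_0)=0$, both obviously finitely generated.

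For the inductive step, suppose $A=B\oplus_{\MC{k}{S^{n-1}}}\MC{k}{D^n}$, where $B$ is an NCCC of length $l-1$ and, by the inductive hypothesis, $\Kzero(B)$ and $\Kone(B)$ are finitely generated. The pullback is surjective on one factor (the restriction map $\MC{k}{D^n}\to\MC{k}{S^{n-1}}$ is onto), so the Mayer--Vietoris exact sequence in K-theory applies:
\begin{equation*}
\cdots\to \mathrm{K}_{i+1}(\MC{k}{S^{n-1}})\to \mathrm{K}_i(A)\to \mathrm{K}_i(B)\oplus \mathrm{K}_i(\MC{k}{D^n})\to \mathrm{K}_i(\MC{k}{S^{n-1}})\to\cdots
\end{equation*}
Morita invariance and the contractibility of $D^n$ give $\mathrm{K}_0(\MC{k}{D^n})=\Int$ and $\mathrm{K}_1(\MC{k}{D^n})=0$. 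Likewise $\mathrm{K}_*(\MC{k}{S^{n-1}})\cong \mathrm{K}_*(C(S^{n-1}))$, and the K-theory of a sphere is free abelian of rank at most two. Thus the two groups flanking $\mathrm{K}_i(A)$ in the exact sequence above are finitely generated.

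Finite generation now passes across a three-term exact sequence of abelian groups by a standard argument: if $H\to G\to K$ is exact with $H$ and $K$ finitely generated, then the image of $H$ in $G$ (a quotient of $H$) and the image of $G$ in $K$ (a subgroup of $K$, and subgroups of finitely generated abelian groups are finitely generated) are both finitely generated, whence so is $G$. Applying this with $H=\mathrm{K}_{i+1}(\MC{k}{S^{n-1}})$ and $K=\mathrm{K}_i(B)\oplus \mathrm{K}_i(\MC{k}{D^n})$ shows that $\mathrm{K}_i(A)$ is finitely generated for $i=0,1$, completing the induction.

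There is no real obstacle: the only thing to check carefully is the applicability of Mayer--Vietoris, which is guaranteed by surjectivity of the evaluation map $\MC{k}{D^n}\to\MC{k}{S^{n-1}}$, together with the elementary fact that finite generation of abelian groups is a three-out-of-two-term property in exact sequences.
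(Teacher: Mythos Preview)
Your proof is correct and follows essentially the same inductive strategy as the paper. The only cosmetic difference is that the paper uses the six-term exact sequence associated to the ideal/quotient extension $0\to \MCO{k}{\Real^{n}}\to A\to B\to 0$ rather than the Mayer--Vietoris sequence for the pullback; these carry the same information, and the conclusion follows in the same way.
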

\begin{proof}
The statement is true if $A$ is finite dimensional. Assume the statement is true for noncommutative complexes with length at most $l$.

Let $A$ be a noncommutative complex with length $l+1$. Write $$A=B\oplus_{\MC{k}{S^{n-1}}} \MC{k}{D^{n}},$$
where $B$ is a noncommutative complex with length $l$. Then there is a short exact sequence
\begin{displaymath}
\xymatrix{
0 \ar[r] & \MCO{k}{\Real^{n}} \ar[r] & A \ar[r] & B \ar[r] & 0, 
}
\end{displaymath}
and the corresponding six-term exact sequence is
\begin{displaymath}
\xymatrix{
\Kzero(\mathrm{C}_0(\Real^{n}))\ar[r] & \Kzero(A)\ar[r] & \Kzero(B) \ar[d] \\
\Kone(B) \ar[u] & \Kone(A) \ar[l] & \Kone(\mathrm{C}_0(\Real^{n})). \ar[l]
}
\end{displaymath}
If $n$ is odd, one has
\begin{displaymath}
\xymatrix{
0 \ar[r] & \Kzero(A) \ar[r] & \Kzero(B) \ar[r] & \cdots ,
}
\end{displaymath}
and
\begin{displaymath}
\xymatrix{
0\ar[r] &\Int/m\Int \ar[r] & \Kone(A) \ar[r] & \Kone(B) \ar[r] & 0,
}
\end{displaymath}
for some positive integer $m$.
By the inductive hypothesis, the groups $\Kzero(B)$ and $\Kone(B)$ are finitely generated, and therefore the groups $\Kzero(A)$ and $\Kone(A)$ are finitely generated.

If $n$ is even, a similar argument shows that $\Kzero(A)$ and $\Kone(A)$ are finitely generated. Therefore, the K-groups of $A$ are always finitely generated. Hence by induction, the statement holds for all noncommutative cell complexes.
\end{proof}

In general, the positive cone of $\Kzero(A)$ might not be finitely generated; for instance, the positive cone of $\Kzero(\mathrm{C}(S^2))$ is
$$\{(m, n)\in\Int^2: m>0\}\cup\{(0, 0)\},$$
which is not finitely generated. (On the other hand, consider the image $$G:=\rho(\Kzero(A))\subseteq \aff(\tr(A)),$$ with respect to the canonical map $\rho$, with the induced order from $\aff(\tr(A))$ (i.e., an element $g\in G$ is positive if and only if $g$ is positive in $\aff(\tr(A))$), is isomorphic to $\Int$ and so the positive cone of $G$ is finitely generated.)

The following lemma was stated and proved in \cite{GLN-TAS} for the $\Kzero$-group of an Elliott-Thomsen algebra. The argument in fact shows the following (for the reader's convenience, we include the proof). (In fact the ordered groups arising are the same.)
\begin{lem}[Theorem 3.14 of \cite{GLN-TAS}]\label{embedding-fg}
Consider $(\Int^l, (\Int^l)^+)$ with the standard (direct sum) order.  Let $G$ be a subgroup of $\Int^m$, and put $G^+=G\cap (\Int^m)^+$. Then the positive cone $G^+$ is finitely generated (as a semigroup).
\end{lem}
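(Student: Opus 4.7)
The plan is to invoke Dickson's lemma: every subset of $\Int_{\geq 0}^m$ has only finitely many minimal elements under the coordinatewise partial order. Applying this to $G^+\setminus\{0\}$ produces a finite list $m_1,\ldots,m_N$ of minimal nonzero elements of $G^+$, which I will show generate $G^+$ as a semigroup.

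To establish generation, I will induct on the $\ell^1$-norm $\|g\|_1 = \sum_{i=1}^m g_i$ for $g\in G^+$. Given nonzero $g\in G^+$, the set $\{h\in G^+\setminus\{0\}: h\leq g\}$ (coordinatewise) is nonempty since it contains $g$, hence admits a minimal element; by minimality in $G^+\setminus\{0\}$ this element must be one of the $m_i$. Then $g-m_i$ has all coordinates nonnegative and lies in $G$, hence in $G^+$, and $\|g-m_i\|_1 < \|g\|_1$ since $m_i\neq 0$. By the inductive hypothesis $g-m_i$ is a finite sum of the $m_j$'s, so $g$ is too.

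An alternative is to invoke Gordan's lemma directly: since $G$ is a subgroup of $\Int^m$, it is free abelian of some rank $k\leq m$, and fixing a basis $v_1,\ldots,v_k\in\Int^m$ of $G$ identifies $G^+$ with the set of integer points in the rational polyhedral cone $\{x\in\Real^k: \sum_i x_i v_i\in\Real_{\geq 0}^m\}$, which is a finitely generated semigroup by Gordan (proved via the fundamental parallelotope of the defining generators). The Dickson approach is shorter and avoids choosing a basis for $G$.

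Either way the main content is the observation that a standard combinatorial principle supplies the finite generating set; no serious obstacle arises, as one need not analyze the cone structure of $G^+$ beyond the coordinatewise order. If desired, Dickson's lemma itself admits a short inductive proof on $m$ (projecting to the last coordinate and fibering over subsemigroups of $\Int_{\geq 0}$), but since it is a standard fact I would simply cite it.
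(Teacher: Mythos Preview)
Your proposal is correct and follows essentially the same route as the paper: the paper's first paragraph is a self-contained proof of Dickson's lemma for the set $G^+\setminus\{0\}$ (via a sequence argument rather than citing it), and its second paragraph is exactly your descent argument of subtracting off a minimal element dominated by $g$. The only cosmetic differences are that you name Dickson's lemma instead of reproving it, and you use the $\ell^1$-norm to make the termination explicit where the paper simply observes ``This process is finite.''
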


\begin{proof}
Let us first show that $G^+\setminus\{0\}$ has only finitely many minimal elements. Suppose, otherwise, that $\{q_n\}$ is an infinite set of minimal elements  of $G_+\setminus \{0\}.$
Write $$q_n=(m(1,n), m(2,n), ..., m(j, n))\in \Int^m_+,$$ where $m(i,n)$ are positive integers (including zero),
$i=1,2,...,m$ and $n=1, 2,....$ If there is an integer $M\ge 1$ such that
$m(i,n)\le M$ for all $i$ and $n,$ then $\{q_n\}$ is a finite set. So we may assume
that $\{m(i,n)\}$ is unbounded for some $1\le i\le m.$
Passing to a subsequence of $\{n_k\}$ such that
$\lim_{k\to\infty} m(i,n_k)=+\infty,$ we may assume
that  $\lim_{n \to\infty}m(i,n)=+\infty.$ We may assume that, for some $j,$ $\{m(j,n)\}$ is bounded.
Otherwise, by passing to a subsequence,  we may assume that $\lim_{n\to\infty}m(i,n)=+\infty$ for
all $i\in \{1,2,...,m\}.$
Therefore  $\lim_{n\to\infty}m(i,n)-m(i,1)=+\infty.$
It follows that, for some $n\ge 1,$
$m(i,n)>m(i,1)$ for all $i\in\{1,2,...,m\}.$ Therefore, $q_n> q_1$, which contradicts the fact
that $q_n$ is minimal.
By passing to a subsequence, we may write $\{1,2,...,m\}=N\sqcup B$ such that
$\lim_{n\to\infty} m(i,n)=+\infty$ if $i\in N$ and
$\{m(i,n)\}$ is bounded if $i\in B.$ Therefore, $\{m(j,n)\}$ has only finitely many different values if $j\in B.$
Thus, by passing to a subsequence  again, we may assume that
$m(j,n)=m(j,1)$ if $j\in B.$ Therefore, for some $n>1,$
$m(i,n)>m(i,1)$ for all $n$ if $i\in N$, and $m(j,n)=m(j,1)$ for all $n$ if $j\in B.$
It follows that $q_n\ge q_1.$ This is impossible, since $q_n$ is minimal.
This shows that $G^+$ has only finitely many minimal elements.

To show that $G^+$ is generated by these minimal elements,
fix an element $q\in G^+\setminus \{0\}.$   It suffices to show that $q$ is a finite sum
of minimal elements in $G^+.$ If $q$ is not minimal, consider
the set of all elements in $G^+\setminus\{0\}$ which are (strictly) smaller than $q.$
This set is finite.  Choose one which is minimal among them, say $p_1.$ Then $p_1$ is minimal  element
in $G^+\setminus \{0\},$ as otherwise there is one smaller than $p_1.$  Since $q$ is not minimal, $q\not=p_1.$
Consider $q-p_1\in G^+\setminus\{0\}.$ If $q-p_1$ is minimal, then
$q=p_1+(q-p_1).$ Otherwise, we repeat the same argument to obtain a minimal element
$p_2\le q-p_1.$ If $q-p_1-p_2$ is minimal, then $q=p_1+p_2+(q-p_1-p_2).$
Otherwise we repeat the same argument. This process is finite. Therefore $q$ is a finite sum of
minimal elements in $G^+\setminus \{0\}.$
\end{proof}

With Lemma \ref{embedding-fg}, one has

\begin{lem}\label{fg-quotient}
Let $A$ be an NCCC. Then the ordered group $$(\rho_A(\Kzero(A)), \rho_A(\Kzero(A))\cap \aff^+(\tr(A)))$$ is finitely generated (as an ordered group). (In other words, the positive cone is finitely generated as a semigroup.)
\end{lem}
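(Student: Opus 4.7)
The plan is to reduce to Lemma \ref{embedding-fg} by embedding $\rho_A(\Kzero(A))$ order-compatibly into $\Int^m$. The reduction hinges on Corollary \ref{tr-measure} together with the fact that each $X_i$ is connected (either a point or a disk), which forces any projection in $M_N(\MC{d_i}{X_i})$ to have constant pointwise rank on $X_i$.

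Explicitly, for each $i=1,\ldots,m$ I would introduce a group homomorphism $r_i:\Kzero(A)\to\Int$ assigning to a projection $p\in M_N(A)$ the constant integer $\mathrm{rank}(\Pi_i(p)(x))$, $x\in X_i$. Combined with Corollary \ref{tr-measure}, for any trace $\tau=\sum_i\alpha_i(\tau)\mu_i$ one has
\[
\tau(p)=\sum_{i=1}^m \alpha_i(\tau)\,\frac{r_i([p])}{d_i}.
\]
Consequently, $\rho_A$ factors as $\Kzero(A) \xrightarrow{\varphi} \bigoplus_{i=1}^m \frac{1}{d_i}\Int \to \aff(\tr(A))$, where $\varphi([p])=(r_i([p])/d_i)_i$ and the second arrow sends $(c_i)$ to the affine function $\tau\mapsto\sum\alpha_i(\tau)c_i$. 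Since every probability measure on $X_i$ gives an actual trace of $A$ by Corollary \ref{tr-measure} (in particular a Dirac measure at any point), the vector $(\alpha_i(\tau))$ ranges over the full standard simplex as $\tau$ varies, which makes the second arrow injective and allows $\varphi$ to descend to an injective homomorphism $\overline\varphi:\rho_A(\Kzero(A))\to\bigoplus_i \frac{1}{d_i}\Int$.

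Next I would verify that $\overline\varphi$ is an order embedding. By the same simplex-exhaustion argument, $\rho_A(x)\ge 0$ in $\aff(\tr(A))$ iff $\sum \alpha_i c_i \ge 0$ for all $(\alpha_i)$ in the standard simplex, where $(c_i)=\overline\varphi(\rho_A(x))$; evaluating at each extreme point of the simplex shows this is equivalent to $c_i\ge 0$ for every $i$. Thus the positive cone of $\rho_A(\Kzero(A))$ corresponds exactly to $\overline\varphi(\rho_A(\Kzero(A)))\cap \bigl(\bigoplus_i \tfrac{1}{d_i}\Int\bigr)^+$. Since $\bigoplus_i \frac{1}{d_i}\Int$ with its coordinate-wise order is order-isomorphic to $(\Int^m,(\Int^m)^+)$ via rescaling, and since $\Kzero(A)$, hence also $\rho_A(\Kzero(A))$, is finitely generated by Lemma \ref{fg-K}, Lemma \ref{embedding-fg} applies directly to produce the desired finite generation of the positive cone.

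The only point that requires care is the order-reflection step: confirming that as $\tau$ varies over $\tr(A)$ the coefficient vector $(\alpha_i(\tau))$ realizes each extreme point of the simplex, so that positivity in $\aff(\tr(A))$ is detected coordinate-wise. This is immediate from the existence half of Corollary \ref{tr-measure} applied to a Dirac measure on each $X_i$. Everything else is a direct bookkeeping reduction to Lemma \ref{embedding-fg}.
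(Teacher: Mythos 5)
Your argument is correct and, modulo cosmetic differences, is exactly the proof the paper gives: both pass to the rank vector $[p]\mapsto(\mathrm{rank}\,\Pi_i(p))_i\in\Int^m$, show via Corollary \ref{tr-measure} that $\rho_A$ factors through this map with the quotient an order isomorphism onto its image, and then invoke Lemma \ref{embedding-fg}. The only real difference is that you state the factorization through $\bigoplus_i\tfrac{1}{d_i}\Int$ rather than through $\Int^m$ directly, and you are somewhat more explicit than the paper about why positivity in $\aff(\tr(A))$ is detected coordinate-wise, namely that evaluations at an interior point of each $X_i$ realize the extreme points of the coefficient simplex; the paper leaves this as an unspoken appeal to the structure of the trace simplex.
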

\begin{proof}
With the fixed NC cell complex decomposition of $A$, consider the standard embedding
$$\Pi: A\to \bigoplus_{i=1}^m \MC{d_i}{X_i}.$$

Define
$$\rho: \Kzero(A)\ni [p] \mapsto (\mathrm{rank}(\Pi_1(p)), ..., \mathrm{rank}(\Pi_m(p))) \in \Int^m.$$
Clearly, the map $\rho$ is positive.

Define
$$G=\rho(\Kzero(A)) \quad\mathrm{and}\quad G^+=\rho(\Kzero(A))\cap (\Int^m)^+.$$
It follows from Lemma \ref{embedding-fg} that the cone $(G, G^+)$ is a finitely generated ordered group. In order to prove the lemma, one only has to show that there is an isomorphism
\begin{equation*}
(G, G^+) \cong (\rho_A(\Kzero(A)), \rho_A(\Kzero(A))\cap\aff^+(\tr(A))).
\end{equation*}

Define
$$\theta: \rho(\Kzero(A))\ni (x_1, x_2, ..., x_m) \mapsto (\tau\mapsto \sum_{i=1}^m\frac{x_i}{d_i}\alpha_i(\tau))\in \aff(\tr(A)).$$
Then $\theta$ is injective. Note that for any $\tau\in\tr(A)$ with the decomposition $$\tau=\alpha_1\mu_0+\alpha_2\mu_2+\cdots+\alpha_m\mu_m,$$
one has
\begin{eqnarray*}
\rho_A(p)(\tau) & = & \sum_{i=1}^m \alpha_{i}\int_{X_i} \mathrm{tr}_i(\Pi_i(p)(x))\mathrm{d}\mu_{i}(x) \\ 
& = & \sum_{i=1}^m \alpha_{i} \int_{X_i} \frac{\mathrm{rank}(\Pi_i(p))}{d_i} \mathrm{d}\mu_{i, j} \\
& = & \sum_{i=1}^m \alpha_{i} \frac{\mathrm{rank}(\Pi_i(p))}{d_i}=\theta(\rho(p)),
\end{eqnarray*}
and therefore
$$\theta(\rho(\Kzero(A)))=\theta(G)=\rho_A(\Kzero(A)).$$

It is clear that $\theta(G^+)\subseteq \rho_A(\Kzero(A))\cap\aff^+(\tr(A))$. Moreover, if $\theta(x_1, ..., x_m)\in \aff^+(\tr(A))$, then the affine map $\tau\mapsto \sum_{i=1}^m\frac{x_i}{d_i}\alpha_i(\tau)$ is positive, and hence each $x_i$ must be positive; that is, $\theta$ induces an order isomorphism between $(G, G^+)$ and $\rho_A(\Kzero(A)), \rho(\Kzero(A))\cap\aff^+(\tr(A))$, as desired.
\end{proof}

\begin{lem}\label{approx-finite-tr}
Let $A$ be an NCCC. Then, for any finite set $\mathcal F\subseteq \aff(\tr(A))$ and any $\eps>0$, there are positive continuous affine maps $\theta_1: \aff(\tr(A))\to\Real^s$ and $\theta_2: \Real^s\to \aff(\tr(A))$ for some $s\in\mathbb N$ such that
$$\norm{\theta_2\circ\theta_1(f)-f}_\infty<\eps,\quad f\in\mathcal F.$$
\end{lem}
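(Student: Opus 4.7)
My plan is to discretize the trace decomposition from Corollary \ref{tr-measure}. For $x \in X_i$, write $\tau_{i,x}$ for the point-evaluation trace $a \mapsto \mathrm{tr}(\Pi_i(a)(x))$; this is continuous in $x$. For $f \in \aff(\tr(A))$, the function $f_i(x) := f(\tau_{i,x})$ is then continuous on $X_i$, and the affinity of $f$ combined with Corollary \ref{tr-measure} yields the key identity
$$ f(\tau) = \sum_{i=1}^{m} \alpha_i(\tau) \int_{X_i} f_i \, d\mu_i(\tau). $$

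For each $i$, I would choose a continuous partition of unity $\{\phi_{i,j}\}_{j=1}^{s_i}$ on $X_i$ whose elements have supports of diameter less than the uniform continuity modulus of every $f_i$ with $f \in \mathcal F$, together with points $x_{i,j} \in \mathrm{supp}(\phi_{i,j})$. With $s := \sum_i s_i$, define
$$ \theta_1(f) := \bigl(f(\tau_{i,x_{i,j}})\bigr)_{i,j} \in \Real^s, \qquad \theta_2\bigl((y_{i,j})\bigr)(\tau) := \sum_{i,j} y_{i,j} \, h_{i,j}(\tau), $$
where $h_{i,j}(\tau) := \alpha_i(\tau) \int_{X_i} \phi_{i,j} \, d\mu_i(\tau)$. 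Both maps are manifestly positive and affine, and substituting the pointwise inequality $\bigl\|f_i - \sum_j f_i(x_{i,j})\phi_{i,j}\bigr\|_\infty < \eps$ into the key identity gives $\|\theta_2\theta_1(f) - f\|_\infty < \eps$ for $f \in \mathcal F$.

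The main obstacle is ensuring that each $h_{i,j}$ lies in $\aff(\tr(A))$, i.e.\ is continuous on $\tr(A)$; affinity is automatic from the uniqueness part of Corollary \ref{tr-measure}, but $\alpha_i(\tau)$ and $\mu_i(\tau)$ separately are only lower semicontinuous (as visible in the formula $\beta = \sup_\eta \tau((0, \chi_\eta I_k))$ in the proof of Lemma \ref{decp-tr-nccc}). I would secure continuity by induction on the NC cell-complex length $l$ of $A$, with the base case $l = 0$ being trivial ($A$ finite dimensional, $\aff(\tr(A)) \cong \Real^r$). In the inductive step, writing $A = A_{l-1} \oplus_{\varphi_l} \MC{k_l}{D^{n_l}}$, one requires the partition $\{\phi_{l,j}\}_j$ on the new disk $X_l = D^{n_l}$ to consist of functions vanishing on $S^{n_l-1}$; then each $c_{l,j} := (0, \phi_{l,j} \cdot 1_{d_l})$ lies in the ideal $\MCO{k_l}{D^{n_l}\setminus S^{n_l-1}} \subseteq A$ and $h_{l,j}(\tau) = \tau(c_{l,j})$ for all $\tau$, giving continuity directly. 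For the earlier cells one applies the inductive hypothesis to $A_{l-1}$ and lifts the resulting finite-dimensional approximation from $\aff(\tr(A_{l-1}))$ to $\aff(\tr(A))$ via a continuous cross-section of the quotient $A \to A_{l-1}$; the mass of $\mu_l(\tau)$ near $S^{n_l-1}$ is absorbed into the $A_{l-1}$-contribution through the gluing map $\varphi_l$, and the errors from the two parts combine additively.
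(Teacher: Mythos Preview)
Your proposal is correct and, once the inductive fix is carried out, coincides with the paper's proof: both induct on the cell-complex length, handle the new disk via a partition of unity supported away from $S^{n_l-1}$ (so that each $h_{l,j}$ equals $\tau\mapsto\tau(c_{l,j})$ and is continuous), and lift the approximation from $B=A_{l-1}$ through a positive section of $\aff(\tr(A))\to\aff(\tr(B))$. The paper makes this section explicit as the collar map $g\mapsto(g,\chi_\gamma(\varphi_*(g)))$ in the pullback $\aff(\tr(A))\cong\aff(\tr(B))\times_{C_{\Real}(S^{n-1})}C_{\Real}(D^n)$; your phrase ``continuous cross-section of the quotient $A\to A_{l-1}$'' must be read at this $\aff(\tr)$ level (a C*-level section does not directly give a map in the required direction), and with it the transition-region estimate you allude to is exactly the paper's bound $\|f_D g_\gamma-(f_D-\chi_\gamma(f_D|_{S^{n-1}}))\|_\infty<\eps/3$.
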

\begin{proof}
The statement clearly holds for $A$ a finite dimensional C*-algebra. Assume that $$A=B\oplus_{\MC{k}{S^{n-1}}}\MC{k}{D^n},$$
and suppose that the statement holds for $B$.

Let $(\mathcal F, \eps)$ be given. Note that $\aff(\tr(A))$ is the pullback of $\aff(\tr(B))$ and $\mathrm{C}_{\Real}(D^n)$ in the same manner as $A$.  For each $f\in\mathcal F$, write it as $f=(f_B, f_D)$, where $f_B\in \aff(\tr(B))$ and $f_D\in\mathrm{C}_{\Real}(D^n)$. Denote by $\mathcal F_B$ the set of $f_B$'s. 

Since $B$ satisfies the statement, there are continuous positive affine maps $\theta_{B, 1}: \aff(\tr(B))\to \Real^{s_B}$ and $\theta_{B, 2}: \Real^{S_B}\to \aff(\tr(B))$ such that
\begin{equation}\label{small-B}
\norm{\theta_{B, 2}\circ\theta_{B, 1}(f_B)-f_B}_\infty<\eps/3,\quad f\in\mathcal F.
\end{equation}

Choose $\gamma\in(0, 1)$ such that 
$$\norm{f_D(x)-f_D(y)}<\eps/3,\quad f\in\mathcal F,$$
provided $\mathrm{dist}(x, y)<\gamma$.

Define
$$g_\gamma: [0, 1]\ni x\mapsto \left\{
\begin{array}{ll}
1, & x\in[0, 1-\gamma], \\
\textrm{linear}, & x\in[1-\gamma, 1-\gamma/2],\\
0, & x\in[1-\gamma/2, 1];
\end{array}
\right.
$$
and consider the linear map
$$\chi_\gamma: \mathrm{C}(S^{n-1}) \ni f \mapsto (x\mapsto (1-g_\gamma(\norm{x})) \cdot f(\frac{x}{\norm{x}}))\in \mathrm{C}(D^n).$$
Then $\chi_\gamma$ is a positive affine map from $\aff(\tr(\MC{k}{S^{n-1}}))$ to $\aff(\tr(\MC{k}{D^n}))$. 
Note that
\begin{equation}\label{bd-cut}
\norm{f_Dg_\gamma-(f_D- \chi_\gamma (f_D|_{S^{n-1}}))}_\infty=\sup\{(1-g_\gamma(\norm{x}))(f_D(x)-f_D(\frac{x}{\norm{x}})): x\in D^n\}<\eps/3
\end{equation}
for any $f\in\mathcal F$.


Pick an open cover $\mathcal U$ of $D^n$ such that the variation of the function $f_Dg_\gamma$ on any open subset in $\mathcal U$ is at most $\eps/3$ for any $f\in\mathcal F$. Moreover, one requires that the diameter of each open set in $\mathcal U$ should be at most $\gamma/4$. Choose $\{\phi_{U}: U\in \mathcal U\}$ to be a partition of unity subordinated to $\mathcal U$, and fix $x_U\in U$ for each $U\in {\mathcal U}$.

Put $$\mathcal U_\gamma=\{U_i: U\cap S^{n-1}=\O\},$$ and write
$\mathcal U_\gamma=\{U_1, U_2, ..., U_{\abs{{\mathcal U_\gamma}}}\}.$

Since the diameter of each $U_i\in\mathcal U$ is at most $\gamma/4$, one has that if $U\notin\mathcal U_\gamma$, then $g_\gamma(x_U)=0$, and hence
\begin{eqnarray}\label{small-pert}
 \norm{f_Dg_\gamma-\sum_{U\in\mathcal U_\gamma}(f_Dg_\gamma)(x_U)\phi_U} 
&=&  \norm{f_Dg_\gamma-\sum_{U\in\mathcal U_\gamma}(f_Dg_\gamma)(x_U)\phi_U- \sum_{U\notin\mathcal U_\gamma}(f_Dg_\gamma)(x_U)\phi_U} \\
&=& \norm{f_Dg_\gamma-\sum_{U\in\mathcal U}(f_Dg_\gamma)(x_U)\phi_U}\leq \eps/3. \nonumber
\end{eqnarray}

Define
$$\theta_1: \aff(\tr(A))\ni (f, g)\mapsto \theta_{B, 1}(f) \oplus ((gg_\gamma)(x_{U_i})) \in \Real^{s_B}\oplus\Real^{\abs{\mathcal U_\gamma}},$$
and
\begin{eqnarray*}
\theta_2: \Real^{s_B}\oplus\Real^{\abs{\mathcal U_\gamma}}  & \ni &  ((\xi_1, ..., \xi_{s_B}), (\eta_1, ..., \eta_{\abs{\mathcal U_\gamma}})) \\ 
& \mapsto &   (\theta_{B, 2}((\xi_1, ..., \xi_{s_B})),  \chi_\gamma(\varphi(\theta_{B, 2}((\xi_1, ..., \xi_{s_B}))))+ \sum_{i=1}^{\abs{\mathcal U_\gamma}} \eta_i\phi_{U_i}) \in  \aff(\tr(A)).
\end{eqnarray*}

Then, a straightforward calculation shows that 
$$\theta_2\circ\theta_1((f_B, f_D))=(\theta_{B, 2}(\theta_{B, 1}(f_B)),  \chi_\gamma(\varphi(\theta_{B, 2}(\theta_{B, 1}(f_B))))+ \sum_{i=1}^{\abs{\mathcal U_\gamma}} (f_Dg_\gamma)(x_{U_i})\phi_{U_i})$$

By the inductive assumption, one has that for any $f\in\mathcal F$,
$$\norm{f_B-\theta_{B, 2}(\theta_{B, 1}(f_B))}<\eps/3<\eps,$$
and 
\begin{eqnarray*}
&& \norm{f_D- (\chi_\gamma(\varphi(\theta_{B, 2}(\theta_{B, 1}(f_B))))+ \sum_{i=1}^{\abs{\mathcal U_\gamma}} (f_Dg_\gamma)(x_{U_i})\phi_{U_i})}_\infty \\
& < & \norm{f_D- (\chi_\gamma(f_D|_{S^{n-1}})+ \sum_{i=1}^{\abs{\mathcal U_\gamma}} (f_Dg_\gamma)(x_{U_i})\phi_{U_i})}_\infty  +\eps/3\quad\textrm{(by \eqref{small-B})}\\
& = & \norm{(f_D- \chi_\gamma(f_D|_{S^{n-1}})- \sum_{i=1}^{\abs{\mathcal U_\gamma}} (f_Dg_\gamma)(x_{U_i})\phi_{U_i})}_\infty  +\eps/3 \\
& \leq  & \norm{g_\gamma f_D- \sum (f_Dg_\gamma)(x_p)\phi_p}_\infty  +2\eps/3\quad \textrm{(by \eqref{bd-cut})} \\
& \leq & \eps \quad \textrm{(by \eqref{small-pert})}.
\end{eqnarray*} 
Therefore
$$\norm{\theta_2\circ \theta_1(f)-f}<\eps,\quad f\in\mathcal F,$$
as desired.
\end{proof}

\begin{rem}
In fact, as shown in \cite{EN-K0-Z} (Lemma 2.6), the statement of Lemma \ref{approx-finite-tr} always holds if $\tr(A)$ is replaced by an arbitrary compact metrizable Choquet simplex.
\end{rem}

\section{An existence theorem}

Let $A$ be an NCCC. The main result in this section (Theorem \ref{est-AM}) is that any almost compatible pair $(\kappa, \gamma)$ from an NCCC to $Q$ can be lifted to an algebra homomorphism, where $\kappa: \Kzero(A)\to\Ratn$ and $\gamma: \aff(\tr(A))\to\aff(\tr(Q))\cong\Real$.

\begin{lem}\label{pert-sol}
Let $P$ be an $m\times n$ matrix with integer entries, and let $\xi\in \Real^m$ with each entry a positive number (including zero). Assume that each entry of $P\xi$ is rational. Then, for any $\eps>0$, there is $\zeta\in\Ratn^m$ with positive entries (including zero) such that
\begin{enumerate}
\item $P\xi=P\zeta$, and
\item $\norm{\xi-\zeta}_\infty<\eps.$
\end{enumerate} 
\end{lem}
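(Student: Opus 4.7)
The plan is to reduce the problem to the elementary fact that a nonempty rational affine subspace of $\Real^N$ has its rational points dense inside it. The key structural observation is that one should not try to perturb the zero coordinates of $\xi$ into small positive rationals (which will generically break the constraint $P\zeta = P\xi$ exactly); instead, one pins those coordinates at $0$ from the start, after which positivity becomes an automatic consequence of closeness.

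Concretely, partition the coordinate indices as $Z = \{i : \xi_i = 0\}$ and $N = \{i : \xi_i > 0\}$; if $N = \emptyset$ then $\xi = 0$ is already rational and we take $\zeta = 0$, so suppose $N \neq \emptyset$ and set $\delta = \min\bigl\{\eps,\ \tfrac{1}{2}\min_{i\in N}\xi_i\bigr\}$. Consider the affine subspace
$$W = \bigl\{y : Py = P\xi \text{ and } y_i = 0 \text{ for all } i \in Z\bigr\}.$$
The defining linear system has integer coefficients on the left (the rows of $P$ together with standard coordinate functionals) and rational entries on the right (the entries of $P\xi$ together with zeros), so $W$ is a rational affine subspace; and it is nonempty since $\xi \in W$.

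Now I would invoke the standard fact that $W\cap \Ratn^N$ is dense in $W$: since the rational linear system defining $W$ is consistent over $\Real$ it is consistent over $\Ratn$, giving some rational $\xi^0 \in W$; writing $W = \xi^0 + U$, the translation space $U$ is defined by a rational homogeneous system and therefore admits a $\Ratn$-basis which is also an $\Real$-basis, so $U \cap \Ratn^N$ is dense in $U$. Pick $\zeta \in W \cap \Ratn^N$ with $\|\zeta - \xi\|_\infty < \delta$. Then by construction $P\zeta = P\xi$; for $i \in Z$ we have $\zeta_i = 0$, and for $i \in N$ the estimate $|\zeta_i - \xi_i| < \delta \leq \xi_i/2$ forces $\zeta_i > 0$; and $\|\zeta - \xi\|_\infty < \delta \leq \eps$, as required.

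There is no genuinely hard step --- the only pitfall is the one flagged above, namely trying to approximate the vanishing coordinates by small positive rationals rather than cutting them to exactly $0$. Once one cuts them, the remaining task is a textbook density statement about $\Ratn$-points in a $\Ratn$-affine subspace.
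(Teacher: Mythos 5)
Your proposal is correct and follows essentially the same route as the paper: both arguments pin the vanishing coordinates of $\xi$ at exactly zero (the paper by deleting the corresponding columns of $P$, you by adjoining the constraints $y_i=0$) and then invoke the density of rational points in the resulting nonempty rational affine subspace to produce $\zeta$, with strict positivity of the remaining coordinates guaranteed by taking the approximation fine enough. The only difference is cosmetic, and you are slightly more explicit about why the zero coordinates must be handled this way rather than perturbed.
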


\begin{proof}
By deleting the columns of $P$ corresponding to the $0$'s of $\xi$, one may assume that each entry of $\xi$ is strictly positive. 

Let us show that $$\ker P=\overline {\ker P\cap\Ratn^m}.$$ It is clear that $\ker P \supseteq \overline {\Ratn^m\cap \ker P}.$ Note that $P$ is rational, so that one can choose a basis of $\ker P$ (as a real vector space) consisting of rational vectors, from which it follows that
$$\mathrm{dim}_\Real(\ker P) \leq  \mathrm{dim}_{\Ratn} (\ker P\cap\Ratn^m),$$ and hence $\mathrm{dim}_\Real(\ker P) \leq  \mathrm{dim}_{\Real} (\overline{\ker P\cap\Ratn^m})$.  This forces $\ker P=\overline {\Ratn^m\cap \ker P}.$

Again by the rationality of $P$, there is a vector $\eta\in\Ratn^m$ such that
$$P\eta=P\xi,$$
and hence
$$P^{-1}(\{P\xi\})=P^{-1}(\{P\eta\})=\eta+\ker P=\eta+ \overline {\Ratn^m\cap \ker P}=\overline {\eta+\Ratn^m\cap \ker P}.$$
It is clear that
$$\xi\in \overline {\eta+\Ratn^m\cap \ker P}.$$
On noting that any entry of $\xi$ is strictly positive, and all vectors in $\eta+\Ratn^m\cap \ker P$ are rational, for any $\eps>0$, it follows that there is $\zeta\in \eta+\Ratn^m\cap \ker P$ such that 
$$\norm{\zeta-\xi}_\infty<\eps$$
and each entry of $\zeta$ is positive.
\end{proof}

\begin{lem}\label{est-exact}
Let $A$ be a unital subhomogeneous C*-algebra such that $\mathrm{Prim}_d(A)$ has finitely many connected components for each $d$. Let $(\mathcal F, \eps)$ be given. Then, for any compatible pair $(\kappa, \gamma)$ satisfying, where $\kappa: \Kzero(A)\to \Ratn=\Kzero(Q)$ and $\gamma: \aff(\tr(A))\to \Real=\aff(\tr(A))$, there is a homomorphism $\phi: A\to Q$ such that
\begin{enumerate}
\item $[\phi]_0=\kappa,$ and
\item $|\gamma(\tilde{f})(\mathrm{tr})-\mathrm{tr}(\phi(f))|<\eps,\quad f\in\mathcal F,$
\end{enumerate}
where $\mathrm{tr}$ is the canonical trace of $Q$. Moreover, $\phi$ can be chosen to have finite dimensional range.
\end{lem}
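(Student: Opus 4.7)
The plan is to construct $\phi$ as a finite direct sum of irreducible point evaluations of $A$ realized inside a matrix subalgebra of $Q$. Since $A$ is unital subhomogeneous of bounded degree, every $y\in\mathrm{Prim}(A)$ gives an irreducible representation $\pi_y\colon A\to\Mat{d(y)}$. Choosing finitely many points $y_1,\dots,y_s$ and positive integer multiplicities $m_1,\dots,m_s$ with $N=\sum_i m_id(y_i)$ produces a unital homomorphism $\psi=\bigoplus_{i=1}^s \pi_{y_i}^{\oplus m_i}\colon A\to\Mat{N}$, which embeds unitally into $Q$ because $Q$ is UHF and absorbs $\Mat{N}$. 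The problem therefore reduces to choosing the $y_i$ and $m_i$ so that (i) $\kappa([p])=\frac{1}{N}\sum_i m_i\,\mathrm{rank}(\pi_{y_i}(p))$ on a generating set of $\Kzero(A)$, and (ii) $|\gamma(\tilde f)(\mathrm{tr})-\sum_i \frac{m_id(y_i)}{N}\,\mathrm{tr}_{d(y_i)}(\pi_{y_i}(f))|<\eps$ for each $f\in\mathcal F$.

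To produce real weights first, consider the trace $\sigma(a):=\gamma(\tilde a)(\mathrm{tr})$ on $A$. By the standard decomposition of traces on a unital subhomogeneous C*-algebra, $\sigma$ is an integral of normalized matrix traces $\mathrm{tr}_y\circ\pi_y$ against a Borel probability measure $\mu$ on $\mathrm{Prim}(A)$. The hypothesis that each $\mathrm{Prim}_d(A)$ has finitely many connected components ensures that the functions $y\mapsto \mathrm{tr}_y(\pi_y(f))$ are continuous on a finite stratification, so weak-$*$ density of finitely supported measures yields nonnegative reals $\alpha_1,\dots,\alpha_s$ with $\sum\alpha_i=1$ and points $y_1,\dots,y_s$ satisfying $|\sigma(f)-\sum_i\alpha_i\,\mathrm{tr}_{d(y_i)}(\pi_{y_i}(f))|<\eps/3$ for every $f\in\mathcal F$. (Alternatively one can run Lemma \ref{approx-finite-tr} on $\{\tilde f:f\in\mathcal F\}$ and post-compose with $\gamma$, then realize the resulting positive functional on $\Real^s$ by point evaluations.)

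The compatibility of $(\kappa,\gamma)$ forces these weights to also give an approximate solution of the $\Kzero$ equation---indeed, $\kappa$ factors through $\rho_A$ since $\gamma$ vanishes on the kernel of the canonical map $\Kzero(A)\to\aff(\tr(A))$---and a small real perturbation of $\alpha$ lying in the kernel of the rank matrix $P$ makes the $\Kzero$ equation hold exactly while only marginally affecting the tracial estimate. Choose a finite set $g_1,\dots,g_t$ generating $\rho_A(\Kzero(A))$---finitely generated by Lemma \ref{fg-quotient} in the NCCC case and by the component-finiteness hypothesis in general; the rank map yields an integer matrix $P$, and the equation reads $P\alpha=(\kappa(g_j))_j\in\Ratn^t$. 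By Lemma \ref{pert-sol}, $\alpha$ can be replaced by a nearby positive rational vector $\zeta$ with $P\zeta=P\alpha$ and $\norm{\zeta-\alpha}_\infty$ arbitrarily small; in particular small enough to keep the total tracial error below $\eps$. Clearing denominators gives integers $m_i$ and $N$, and the corresponding $\phi$ satisfies both conditions.

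The principal obstacle is the simultaneous enforcement of the exact $\Kzero$ equality and the approximate tracial bound while maintaining positivity of all weights. Lemma \ref{pert-sol} is exactly designed for this; positivity of the perturbed rational vector is automatic provided we first drop any point $y_i$ with $\alpha_i=0$ before applying the perturbation, so this step presents no real difficulty.
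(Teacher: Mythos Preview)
Your overall strategy---decompose the trace, approximate by a finitely supported measure, rationalize the weights via Lemma~\ref{pert-sol}, and assemble a direct sum of point evaluations---is exactly the paper's approach. The difference is the order in which you carry out the discretization and the rationalization, and this is where a gap appears.

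You first approximate $\sigma$ by a discrete measure supported on points $y_1,\dots,y_s$ with real weights $\alpha_i$, controlling only $|\sigma(f)-\sum_i\alpha_i\,\mathrm{tr}(\pi_{y_i}(f))|$ for $f\in\mathcal F$. You then assert that compatibility of $(\kappa,\gamma)$ forces $P\alpha\approx(\kappa(g_j))_j$ and that ``a small real perturbation of $\alpha$ lying in the kernel of the rank matrix $P$ makes the $\Kzero$ equation hold exactly''. Neither claim is justified. Compatibility gives $\sigma(g_j)=\kappa(g_j)$ for the \emph{original} trace $\sigma$, but your weak-$*$ approximation was only arranged on $\mathcal F$, not on the projections $g_j$, so there is no reason $P\alpha$ is close to $(\kappa(g_j))_j$. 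And a perturbation lying in $\ker P$ by definition leaves $P\alpha$ unchanged, so it cannot repair the $\Kzero$ equation. Consequently, when you invoke Lemma~\ref{pert-sol} with the hypothesis $P\alpha=(\kappa(g_j))_j\in\Ratn^t$, that hypothesis has not been established.

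The paper avoids this by reversing the order. It first decomposes the compatible trace $\tau$ along the NCCC cell structure as $\tau=\sum_i\alpha_i\int_{X_i}\mathrm{tr}_i\,d\mu_i$ (Corollary~\ref{tr-measure}); because the rank of $\Pi_i(p)$ is constant on each cell $X_i$, the vector $\xi=(\alpha_i/d_i)$ satisfies $P\xi=(\kappa(p_j),1)^{\mathrm T}$ \emph{exactly} from the outset. Lemma~\ref{pert-sol} then produces a nearby positive rational vector $\zeta$ with $P\zeta=P\xi$, and only \emph{after} this does one discretize each $\mu_i$---a step which cannot disturb the $\Kzero$ equation since rank is constant on $X_i$. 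Your argument can be repaired in the same way: stratify $\mathrm{Prim}(A)$ into the finitely many connected components of the $\mathrm{Prim}_d(A)$, record the $\mu$-mass $\alpha_i$ of each component, observe that $P\alpha=(\kappa(g_j))_j$ holds exactly by compatibility (rank is constant on components), apply Lemma~\ref{pert-sol} to rationalize $\alpha$, and only then choose sample points within each component to approximate $\mathcal F$.
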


\begin{proof}
Without loss of generality, one may assume that $\mathcal F$ is inside the unit ball.
Since $Q$ has unique trace, it is enough to show that for any $\kappa: \Kzero(A) \to \Ratn$ and $\tau\in\tr(A)$ with
$$\tau(p)=\kappa(p),\quad p\in\Kzero(A),$$
there is a homomorphism $\phi: A\to Q$
such that $[\phi]_0=\kappa$ and
$$\abs{\tau(f)-\mathrm{tr}(\phi(f))}<\eps,\quad f\in\mathcal F.$$

By Corollary \ref{tr-measure}, there is a probability measure $\mu_{i}$ on each $X_{i}$, and $\alpha_{i}\in [0, 1]$ with 
$$\sum_{i=1}^m\alpha_{i}=1$$ such that
$$\tau(a)=\sum_{i=1}^m \alpha_{i}\int_{X_{i}} \mathrm{tr}_i(\Pi_i(a))\mathrm{d}\mu_{i},\quad a\in A,$$
where $\mathrm{tr}_i$ is the canonical trace of $\Mat{k_i}$.

Note that for each projection $p\in A\otimes\mathcal K$, one has
\begin{eqnarray}\label{eva-proj}
\tau(p) & = & \sum_{i=1}^m \alpha_{i}\int_{X_i} \frac{\mathrm{rank}(\Pi_i(p))}{d_i}\mathrm{d}\mu_{i} \nonumber \\
& = & \sum_{i=1}^m \alpha_{i}  \frac{\mathrm{rank}(\Pi_i(p))}{d_i}.
\end{eqnarray}

Since $\Kzero(A)$ is finitely generated, there are $p_1, p_2, ..., p_r\in \Kzero(A)$ which generate $\Kzero(A)$.
Consider the matrix
\begin{displaymath}
P=\left(
\begin{array}{cccc}
\mathrm{rank}(\Pi_1(p_1)) & \mathrm{rank}(\Pi_2(p_1)) & \cdots & \mathrm{rank}(\Pi_m(p_1)) \\
\mathrm{rank}(\Pi_1(p_2)) & \mathrm{rank}(\Pi_2(p_2))& \cdots & \mathrm{rank}(\Pi_m(p_2))\\
\vdots &\vdots  & \vdots & \vdots \\
\mathrm{rank}(\Pi_1(p_r)) & \mathrm{rank}(\Pi_2(p_r)) & \cdots & \mathrm{rank}(\Pi_m(p_r))\\
d_1 & d_2 & \cdots & d_m
\end{array}
\right),
\end{displaymath}
and the vector
$$\xi=(\frac{\alpha_1}{d_1}, \frac{\alpha_2}{d_2}, ..., \frac{\alpha_m}{d_m})^{\mathrm{T}}.$$
By \eqref{eva-proj}, one has
$$P\xi=(\tau(p_1), \tau(p_2), ..., \tau(p_r), 1)^{\mathrm{T}}=(\kappa(p_1), \kappa(p_2), ..., \kappa(p_r), 1)^{\mathrm{T}}\in\Ratn^{r+1}.$$
Then, by Lemma \ref{pert-sol}, there is a positive rational vector $\xi\in\Ratn^m$ such that
\begin{equation}\label{new-vector}
P\xi=P\zeta\quad\mathrm{and}\quad\norm{\xi-\eta}_\infty < \eps/2l(k_1+\cdots+k_l).
\end{equation}
Write $$\zeta=(\frac{\beta_1}{d_1}, \frac{\beta_2}{d_2}, ..., \frac{\beta_m}{d_m})^{\mathrm{T}}.$$
By \eqref{new-vector}, one has that 
\begin{equation}\label{small-coef}
\abs{\alpha_{i}-\beta_{i}}<\eps/2m,\quad 1\leq i\leq m.
\end{equation}

Since $P\xi=P\zeta$, one has 
$$\sum_{i=1}^m\beta_{i} \frac{\mathrm{rank}(\pi_x(p_j))}{d_i}=\sum_{i=1}^m\alpha_{i} \frac{\mathrm{rank}(\pi_x(p_j))}{d_i},\quad j=1, ..., r. $$
Since $p_1, p_2, ..., p_l$ generate $\Kzero(A)$, one has
\begin{equation}\label{unchange-proj}
\sum_{i=1}^m\beta_{i} \frac{\mathrm{rank}(\pi_x(p))}{d_i}=\sum_{i=1}^m\alpha_{i} \frac{\mathrm{rank}(\pi_x(p))}{d_i},\quad p\in \Kzero(A).
\end{equation}
It also follows from $P\xi=P\zeta$ that $\beta_1+\cdots+\beta_m=1$.

Consider $\tau'\in \tr(A)$ defined by
$${\tau'}(a)=\sum_{i=1}^m\beta_{i}\int_{X_i} \mathrm{tr}_i(\Pi_i(a))\mathrm{d}\mu_{i},\quad a\in A.$$
Then, by \eqref{unchange-proj} and \eqref{small-coef}, one has
$${\tau'}(p)=\tau(p),\quad p\in\Kzero(A),$$
and
$$\abs{\tau(f)-{\tau'}(f)}<\eps/2,\quad f\in\mathcal F.$$

Consider each probability measure $\mu_{i}$, and choose a discrete measure $\tilde{\mu}_{i}$ such that
$$\abs{\int_{X_{i}}\mathrm{tr}_i(\Pi_i(f))\mathrm{d}\mu_{i}-\int_{X_{i}}\mathrm{tr}_i(\Pi_i(f))\mathrm{d}\tilde{\mu}_{i}}<\eps/2,\quad f\in\mathcal F.$$

Write 
$$\tilde{\mu}_{i}=\frac{1}{l_{i}}\sum_{k=1}^{l_{i}}\delta_{x_{ik}},$$
for some $x_{ik}\in X_{i}$, where $\delta_{x_{ik}}$ is the Dirac measure concentrated at $x_{ik}$. Without loss of generality, one may assume that all $l_{i}$ are the same and equal to $l$ for some $l$.

Define 
$$\tilde{\tau}(a)=\sum_{i=1}^m \beta_{i}\int_{X_i} \mathrm{tr}_i(\Pi_i(a))\mathrm{d}\tilde{\mu}_{i},\quad a\in A.$$
One then has 
$$\tilde{\tau}(p)=\tau(p),\quad p\in\Kzero(A),$$
and
$$\abs{\tau(f)-\tilde{\tau}(f)}<\eps,\quad f\in\mathcal F.$$
Write $\beta_{i}=q_{i}/q$ for natural numbers $q_{i}\leq q$. Since $\sum\beta_{i}=1$, one has that $$\sum q_{i}=q.$$

Define a unital homomorphism by
$$\phi: A\ni a\mapsto \bigoplus_{i=1}^m\bigoplus_{k=1}^{l} (\underbrace{\pi_{x_{ik}}(a)\oplus\cdots\oplus \pi_{x_{ik}}(a)}_{d_1 \cdots d_{i-1}d_{i+1}\cdots d_mq_{i}})\in \Mat{N},$$
where $N=d_1 \cdots d_m lq.$
Then
\begin{eqnarray*}
\mathrm{tr}(\phi(a)) 
& = & \frac{\sum_{i=1}^m  \sum_{k=1}^l(d_1 \cdots d_{i-1}d_{i+1}\cdots d_mq_i)\mathrm{Tr}(\pi_{x_{ik}}(a))}{d_1\cdots d_n ql}\\
& = & \sum_{i=1}^m \frac{d_1 \cdots d_{i-1}d_{i+1}\cdots d_md_{i}}{d_1\cdots d_n q}q_i\frac{1}{l}\sum_{k=1}^l\mathrm{tr}_i(\pi_{x_{ik}}(a))\\
& = & \sum_{i=1}^m \frac{q_{i}}{q}(\frac{1}{l} \sum_{k=1}^l\mathrm{tr}_i(\pi_{x_{ik}}(a))) = \tilde{\tau}(a).
\end{eqnarray*}
Let $\iota: \Mat{N}\to Q$ be an unital embedding. Then the homomorphism $\iota\circ\phi$ satisfies the condition of the lemma.
\end{proof}

\begin{lem}\label{lbd-test}
Let $A$ be a NCCC. For any finite set $\mathcal F\subseteq A$ and any $\eps>0$, there is a finite set $\mathcal H\subseteq A^+$ such that for any $\sigma>0$, there is $\delta>0$ such that if $\tau\in\tr(A)$ satisfies
$$\tau(h)>\sigma,\quad h\in\mathcal H,$$
there exists $\tau'\in\tr(A)$ such that 
\begin{enumerate}
\item $\tau'(p)=\tau(p),$ $p\in\Kzero(A)$,
\item $\abs{\tau'(f)-\tau(f)}<\eps$, $f\in\mathcal F$, and
\item $\alpha_{i}(\tau')>\delta$, all $i$.
\end{enumerate}
Moreover, one may require that $\delta$ depends only on $\sigma$.
\end{lem}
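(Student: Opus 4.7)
My approach is to choose $\mathcal H$ cell-by-cell so that the hypothesis $\tau(h)>\sigma$ for $h\in\mathcal H$ either directly controls $\alpha_i(\tau)$ from below (for disk cells) or provides enough freedom to perturb $\tau$ into $\tau'$ with $\alpha_i(\tau')>\delta$ (for point cells). The decomposition from Corollary \ref{tr-measure} is the basic tool, together with an affine perturbation argument preserving $\Kzero$-values.

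For each cell $X_i=D^{n_i}\setminus S^{n_i-1}$ coming from an attached disk, I include in $\mathcal H$ a positive element $h_i$ of the ideal $\MCO{k_i}{D^{n_i}\setminus S^{n_i-1}}\subseteq A$ with $\norm{h_i}\leq 1$. Writing $\tau=\sum_j\alpha_j\mu_j$ via Corollary \ref{tr-measure}, one computes $\tau(h_i)=\alpha_i\int \mathrm{tr}_i(h_i)\,\mathrm{d}\mu_i\leq \alpha_i$, so $\tau(h_i)>\sigma$ directly yields $\alpha_i(\tau)>\sigma$. For each cell $X_i=\{\mathrm{pt}\}$ corresponding to a summand $\Mat{s_i}$ of $A_0$, I include a positive lift of the unit of $\Mat{s_i}$; this does not by itself bound $\alpha_i$ below, but will enter the perturbation argument.

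Given $\tau$ satisfying the hypothesis, the disk bounds $\alpha_j(\tau)>\sigma$ are automatic. I then construct $\tau'$ by, for each point-cell index $i$, adding a small amount $\gamma_i=O(\sigma)$ of the canonical trace $\tau_{\mathrm{pt},i}$ of $A$ (factoring through $\Mat{s_i}$) and compensating by redistributing mass among the disk components, so that $\tau'|_{\Kzero(A)}=\tau|_{\Kzero(A)}$. The compensation is feasible precisely because the disk-cell $\alpha_j$'s are bounded below by $\sigma$, leaving room in the $\Kzero$-compatible affine subspace to rebalance. The resulting $\tau'$ satisfies (1) by construction and (3) with $\delta=\delta(\sigma)>0$ (essentially $\min(\sigma/2,\min_i\gamma_i)$).

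Condition (2) is handled using Lemma \ref{approx-finite-tr}: each $f\in\mathcal F$ is approximated within $\eps/3$ by $\theta_2\circ\theta_1(f)\in\aff(\tr(A))$, so it suffices to control $|\theta_2\theta_1(f)(\tau')-\theta_2\theta_1(f)(\tau)|$, which is a finite-dimensional quantity controlled by the size of the perturbation $\gamma_i$. The main obstacle I expect is verifying that the $\Kzero$-preserving redistribution in the perturbation step is always constructible, with uniform control in $\sigma$: this requires analyzing the rank matrix $P$ (as in Lemma \ref{pert-sol}) and using the disk-cell lower bounds to exhibit a neighborhood in the $\Kzero$-compatible affine subspace within which point-cell mass can freely be added while staying in the simplex $\tr(A)$.
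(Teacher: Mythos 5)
The central claim of your proposal---that for each disk cell $X_i$ you can choose $h_i$ in ``the ideal $\MCO{k_i}{D^{n_i}\setminus S^{n_i-1}}\subseteq A$'' and conclude $\tau(h_i)\leq\alpha_i(\tau)$---does not hold for a general NCCC. The interior of the $i$-th attached disk is an ideal of the intermediate algebra $A_i$, but it need \emph{not} give an ideal of $A$ when $i$ is not the last attached cell: the gluing maps $\varphi_j\colon A_{j-1}\to\MC{k_j}{S^{n_j-1}}$ of later cells may be nonzero (indeed, surjective) on the ideal $\MCO{k_i}{\mathring D^{n_i}}\subseteq A_i$, so every element $h\in A$ with $\Pi_i(h)$ a nonzero bump in $\mathring D^{n_i}$ is forced to have $\Pi_j(h)\neq 0$ for some $j>i$ (its boundary value on the $j$-th cell is $\varphi_j(\Pi_i(h))\neq 0$). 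Consequently $\tau(h_i)=\sum_j \alpha_j\int\mathrm{tr}_j(\Pi_j(h_i))\,\mathrm d\mu_j$ has unavoidable positive contributions from $j\neq i$, and $\tau(h_i)>\sigma$ does not force $\alpha_i(\tau)>\sigma$. For a trace carried by a later cell glued onto the interior of cell $i$, you can have $\tau(h_i)$ close to $\|h_i\|$ while $\alpha_i(\tau)=0$. So the ``disk bounds are automatic'' step fails, and with it the scaffold on which the rest of your perturbation argument rests (you use the lower bounds on the disk $\alpha_j$'s to justify that the $\Kzero$-preserving redistribution stays inside the trace simplex).

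This is exactly the difficulty the paper circumvents by arguing by induction on the cell-complex length, peeling off only the \emph{last} attached cell at each stage: for the outermost cell the interior genuinely is an ideal of the current algebra, so $\tau(\tilde g)>\sigma$ controls $\beta$, while $1-\tilde g_{\gamma/2}$ controls the total mass $\alpha'$ landing on $B$, and the elements $\tilde h=(h,h_D)$ (explicit extensions of $h\in\mathcal H_B$ into the collar of the disk) together with a preliminary modification $\tilde\tau$ of the trace---which pushes the measure near $S^{n-1}$ into $B$ without changing $\Kzero$-values---allow the inductive hypothesis to be applied to the trace $\tilde\tau_B$ restricted to $B$. In short, the paper never tries to bound all $\alpha_i$ directly from a single finite set of test elements; it bounds only the last coefficient directly and obtains the others from the recursion, with $\delta=\sigma\cdot\delta_B(\sigma)$ accumulating multiplicatively. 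A repaired version of your argument would have to incorporate that recursive peeling (or an equivalent device to deal with cells whose interiors fail to be ideals of $A$).

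As a secondary point: the ``redistribution'' step as you describe it---adding mass at the point cells and compensating inside the disk cells so that $\Kzero$-values are preserved---is not obviously constructible with control depending only on $\sigma$ unless you already have \emph{all} $\alpha_i$ (not just the disk ones) bounded below, which is precisely what the lemma is trying to produce; see how the paper instead preserves $\Kzero$-values exactly by pushing boundary mass to the boundary sphere (so the rank of $\Pi_i(p)$ is unchanged) rather than by solving a rank-matrix equation. Your appeal to Lemma~\ref{approx-finite-tr} for condition~(2) is reasonable, but that part of the argument is not where the gap lies.
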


\begin{proof}
If $A=\Mat{n_1}\oplus\cdots\oplus\Mat{n_k}.$ Then $\mathcal H=\{1_{\Mat{n_i}}: 1\leq i\leq k\}$ satisfies the lemma with $\delta=\sigma$ for any given $\sigma$ (in fact, one has that $\tau'=\tau$ in this case).

Let $$A=B\oplus_{\MC{k}{S^{n-1}}} \MC{k}{D^{n}},$$
where $B$ is NCCC, and assume that the lemma holds for $B$.

Let $\mathcal F\subseteq A$, $\eps>0$ be given. For each $f\in A$, write $f=(f_B, f_D)$ for $f_B\in B$ and $f_D\in \MC{k}{D^n}$. Since $B$ is assumed to satisfy the lemma, there is $\mathcal H_B$ such that for any $\sigma$, there is $\delta_B(\sigma)>0$ such that
if $\tau_B\in\tr(B)$ satisfies
$$\tau_B(h)>\sigma,\quad h\in\mathcal H_B,$$
there is $\tau_B'\in\tr(A)$ such that
$$\tau_B'(p)=\tau_B(p),\quad p\in\Kzero(B),$$
$$\abs{\tau_B'(f_B)-\tau_B(f_B)}<\eps/2,\quad f\in \mathcal F,$$ and
$$\alpha_{i}(\tau'_B)>\delta_B(\sigma),\quad \textrm{all $i$}.$$

Choose $\gamma>0$ such that 
$$\norm{f_D(x)-f_D(y)}<\eps/2,\quad f\in\mathcal F,$$
if $\mathrm{dist}(x, y)<\gamma$.

Define 
$$g_\gamma: [0, 1]\ni x\mapsto \left\{
\begin{array}{ll}
1, & x\in[1-\gamma/2, 1], \\
\textrm{linear}, & x\in[1-\gamma, 1-\gamma/2],\\
0, & x\in[0, 1-\gamma/2];
\end{array}
\right.
$$
similarly, define $g_{\gamma/2}$ and $g_{2\gamma}$.

For each $h\in\mathcal H_B$ define $h_D\in \MC{k}{D^n}$ by
$$
h_D(x)=\left\{
\begin{array}{ll}
0, & \norm{x}\in[0, 1-\gamma/2],\\
\frac{4\norm{x}+2\gamma-4}{\gamma} \varphi(h)(\frac{x}{\norm{x}})  , &  \norm{x}\in[1-\gamma/2, 1-\gamma/4], \\
  \varphi(h)(\frac{x}{\norm{x}}), &  \norm{x}\in[1-\gamma/4, 1],  
\end{array}
\right.
$$
where $\varphi: B\to\MC{k}{S^{n-1}}$ is the gluing map.

For each $h\in\mathcal H_B$, define 
$$\tilde{h}=(h, h_D)\in A=B\oplus_{\MC{k}{S^{n-1}}}\MC{k}{D^n}.$$
Also define $\tilde{g}_{2\gamma}=(0, g_{2\gamma} 1_{\Mat{k}}), \tilde{g}_{\gamma/2}=(0, g_{\gamma/2} 1_{\Mat{k}})\in A$.

Then 
$$\mathcal H=\{\tilde{h}, \tilde{g}_{2\gamma}, 1-\tilde{g}_{\gamma/2}: h\in \mathcal H_B\}$$
satisfies the condition of the lemma. 

Indeed, for any $\sigma>0$, set
$$\delta=\sigma\delta_B(\sigma).$$

Let $\tau\in \tr(A)$ be such that
$$\tau(h)>\sigma,\quad h\in\mathcal H.$$
Without loss of generality, one may assume that 
$$\tau((f, g))=\alpha\tau_B(f)+\beta\int_{D^n\setminus S^{n-1}} g\mathrm{d}\mu,$$
where $\mu$ is a discrete probability measure of $D^n\setminus S^{n-1}$.
Since $\tau(\tilde{g}_\gamma)>\sigma,$ one has
$$\beta>\sigma.$$

Write $\mu=\sum_i\beta_i\delta_{x_i}$. For each $x_i$ with $\norm{x_i}\geq \frac{\gamma}{2}$, define $\tau_{B, x_i}\in\tr(B)$ by
$$\tau_{B, x_i}(f)=\frac{1}{k}\mathrm{Tr}(\varphi(f)(\frac{x_i}{\norm{x_i}})).$$
Then define the trace 
\begin{eqnarray*}
\tilde{\tau}((f, g)) & = & \alpha\tau_B(f)+ \beta(\sum_{\norm{x_i}\geq \frac{\gamma}{2}}\beta_i\tau_{B, x_i}(f)) + \beta(\sum_{\norm{x_i}<\frac{\gamma}{2}}\beta_i)\int_{(D^n)^\circ} g\mathrm{d}(\frac{1}{\sum_{\norm{x_i}<\frac{\gamma}{2}}\beta_i}\sum_{\norm{x_i}<\frac{\gamma}{2}}\beta_i\delta_{x_i})\\
& = & \alpha'\tilde{\tau}_B(f) + \beta' \int_{(D^n)^\circ} g\mathrm{d}\mu',
\end{eqnarray*}
where $$\tilde{\tau}_B(f)=\frac{\alpha\tau_B(f)+ \beta(\sum_{\norm{x_i}\geq \frac{\gamma}{2}}\beta_i \tau_{B, x_i}(f))}{\alpha+ \sum_{\norm{x_i}\geq \frac{\gamma}{2}}\beta\beta_i},\quad \alpha'=\alpha+ \sum_{\norm{x_i}\geq \frac{\gamma}{2}}\beta\beta_i,$$
and
$$\mu'= \frac{1}{\sum_{\norm{x_i}<\frac{\gamma}{2}}\beta_i}\sum_{\norm{x_i}<\frac{\gamma}{2}}\beta_i\delta_{x_i}, \quad \beta'= \beta(\sum_{\norm{x_i}<\frac{\gamma}{2}}\beta_i).$$

Note that
\begin{equation}\label{k-fix-0}
\tau(p)=\tilde{\tau}(p),\quad p\in\Kzero(A).
\end{equation}

By the choice of $\gamma$, one has
\begin{equation}\label{pert-mov}
\abs{\tilde{\tau}(f)-\tau(f)}<\eps/2,\quad f\in\mathcal F.
\end{equation}
Noting that
$$\tilde{\tau}(\tilde{g}_{2\gamma})=\tau(\tilde{g}_{2\gamma})\quad\textrm{and}\quad \tilde{\tau}(1-\tilde{g}_{\gamma/2})\geq\tau(1-\tilde{g}_{\gamma/2}),$$
one has
$$\tilde{\tau}(\tilde{g}_{2\gamma})>\sigma\quad\textrm{and}\quad \tilde{\tau}(1-\tilde{g}_{\gamma/2})>\sigma.$$
A straightforward calculation shows that
$$\beta'\geq\tilde{\tau}(\tilde{g}_{2\gamma})\quad \textrm{and}\quad \alpha'=\tilde{\tau}(1-\tilde{g}_{\gamma/2}),$$
and therefore
\begin{equation}\label{lbd-coef}
\beta'>\sigma\quad\textrm{and}\quad \alpha'>\sigma.
\end{equation}

Also noting that for any $h\in\mathcal H_B$, one has
$$\tilde{\tau}(\tilde{h})\geq\tau(\tilde{h})>\sigma,$$
and
$$\tilde{\tau}(\tilde{h})=\alpha'\tilde{\tau}_B(h).$$
Therefore,
$$\tilde{\tau}_B(h)>\sigma/\alpha'>\sigma,\quad h\in\mathcal H_B.$$
By the inductive assumption, there is $\tau'_B\in\tr(B)$ 
such that
\begin{equation}\label{k-fix}
\tau_B'(p)=\tau_B(p),\quad p\in\Kzero(B),
\end{equation}
\begin{equation}\label{pert-ind}
\abs{\tau'_B(f_B)-\tilde{\tau}_B(f_B)}<\eps/2,\quad f\in\mathcal F,
\end{equation}
and 
\begin{equation}\label{lbd-ind}
\alpha_{i}(\tau'_B)>\delta_B(\sigma),\quad  \textrm{all $i$}.
\end{equation}
Then the trace
$$\tau'(f, g)=\alpha'\tau'_B(f) + \beta' \int_{(D^n)^\circ} g\mathrm{d}\mu'$$
satisfies the condition of the lemma. By \eqref{k-fix-0} and \eqref{k-fix}, one has
$$\tau'(p)=\tau(p),\quad p\in\Kzero(A),$$
Indeed, by \eqref{pert-mov} and \eqref{pert-ind}, one has
\begin{equation}
\abs{\tau'(f)-\tau(f)}<\eps,\quad f\in\mathcal F.
\end{equation}
By \eqref{lbd-coef} and \eqref{lbd-ind}, one has
$$\alpha_{i}(\tau')>\sigma\delta_B(\sigma),\quad   \textrm{all $i$},$$
as desired.
\end{proof}

\begin{lem}\label{mat-pert}
Let $P$ be a $m\times n$ matrix. Assume that $m\geq n$ and $\mathrm{rank}(P)=n$. Let $\sigma>0$ be given. Then, for any $\eps>0$,  there is $\delta>0$ such that for any vectors $\xi\in\Real^m$ and $\kappa\in\Real^n$ with
\begin{itemize}
\item[(1)] $\xi_i>\sigma$, $i=1, ..., m$, and
\item[(2)] $\norm{\kappa-P\xi}_\infty<\delta,$
\end{itemize}
there is $\zeta\in\Real^m$ with $\zeta_i\geq 0$, $i=1, ..., m$ such that
\begin{itemize}
\item[(3)] $P\zeta=\kappa$, and
\item[(4)] $\|\xi-\zeta\|_\infty<\eps$.
\end{itemize}
\end{lem}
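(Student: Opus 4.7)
The idea is to correct $\xi$ by a small preimage-direction vector. Since $P$ has rank $n$ and $m\geq n$, viewed as a linear map $\Real^m\to\Real^n$ it is surjective, so it admits a (bounded, linear) right inverse $Q:\Real^n\to\Real^m$ with $PQ=I_n$ (for concreteness one can take $Q=P^{\tr}(PP^{\tr})^{-1}$). Let $C$ denote the operator norm of $Q$ with respect to $\|\cdot\|_\infty$ on both sides; since $Q$ is linear between finite-dimensional spaces, $C<\infty$.

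Given $\eps>0$ (and the fixed $\sigma>0$), I would set
$$\delta:=\frac{\min(\eps,\sigma)}{C+1}.$$
Then, for any $\xi\in\Real^m$ with $\xi_i>\sigma$ and $\kappa\in\Real^n$ with $\|\kappa-P\xi\|_\infty<\delta$, define
$$\zeta:=\xi+Q(\kappa-P\xi).$$
Applying $P$ gives
$$P\zeta=P\xi+(PQ)(\kappa-P\xi)=P\xi+(\kappa-P\xi)=\kappa,$$
which is (3). The bound
$$\|\zeta-\xi\|_\infty=\|Q(\kappa-P\xi)\|_\infty\leq C\|\kappa-P\xi\|_\infty<C\delta<\min(\eps,\sigma)$$
immediately yields (4). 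Combined with the hypothesis $\xi_i>\sigma$, it also gives
$$\zeta_i\geq\xi_i-|\zeta_i-\xi_i|>\sigma-\min(\eps,\sigma)\geq 0,$$
so every entry of $\zeta$ is nonnegative (in fact strictly positive), as required.

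\textbf{Where the work is.} There is essentially no serious obstacle: this is the standard continuity-of-solutions argument for an underdetermined but consistent linear system, once one observes that surjectivity of $P$ gives a bounded right inverse. The only modest point of care is that the single parameter $\delta$ must do two jobs simultaneously---keep $\zeta$ within $\eps$ of $\xi$ in sup norm, and keep the correction small enough that it does not overwhelm the lower bound $\xi_i>\sigma$ and produce negative entries---and the choice $\delta=\min(\eps,\sigma)/(C+1)$ handles both at once.
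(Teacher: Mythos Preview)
Your proof is correct and follows essentially the same approach as the paper: both use surjectivity of $P$ to find a small correction $\theta$ with $P\theta=\kappa-P\xi$ and set $\zeta=\xi+\theta$. The only cosmetic difference is that you make things explicit via a concrete right inverse $Q=P^{\tr}(PP^{\tr})^{-1}$ and its operator norm $C$, whereas the paper invokes the open mapping property abstractly (choosing $\delta$ so that $\mathrm{B}_{\|\cdot\|_\infty}(0,\delta)\subseteq P(\mathrm{B}_{\|\cdot\|_\infty}(0,\eps))$, after first reducing WLOG to $\eps<\sigma$); neither adds anything substantive over the other.
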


\begin{proof}
Regard $P$ as a linear map from $\Real^m\to \Real^n$. Since $\mathrm{rank}(P)=n$, one has that $P$ is surjective. Therefore,
for the given $\epsilon<\sigma$, there is $\delta>0$ such that
\begin{equation}\label{gap}
\mathrm{B}_{\norm{\cdot}_\infty}(0, \delta)\subseteq P(\mathrm{B}_{\norm{\cdot}_\infty}(0, \eps)).
\end{equation}
Then the $\delta$ satisfies the condition of the lemma. Indeed, let $\xi\in\Real^m$ and $\kappa\in\Real^n$ be given, and satisfy 
$$\norm{\kappa-P\xi}_\infty<\delta.$$ 
Then 
$$\kappa-P(\xi)\in\mathrm{B}_{\norm{\cdot}_\infty}(0, \delta)\subseteq \Real^n.$$
By \eqref{gap}, there is $\theta\in\mathrm{B}_{\norm{\cdot}_\infty}(0, \eps)\subseteq \Real^m$ such that
$$P(\theta)=\kappa-P(\xi),$$
and hence
$$P(\xi+\theta)=\kappa.$$
Since each entries of $\xi$ is at least $\sigma>\eps,$ one has that $$\xi+\theta\in(\Real^+)^m,$$
and therefore $$\zeta=\xi+\theta$$
is the desired vector.
\end{proof}

\begin{rem}
Let $A$ be an NCCC, and let $\kappa: \Kzero(A)\to \Ratn$ be a positive map. Since $A$ is of type I, it is exact; therefore, $\kappa$ (regarded as a state of $\Kzero(A)$) is induced by a trace. That is, there is $\tau\in \tr(A)$ such that 
$$\kappa(p)=\tau(p),\quad p\in\Kzero(A).$$
In particular, this implies that $\kappa$ factors through $\rho_A(\Kzero(A))$.
\end{rem}

\begin{lem}\label{pert-tr}
Let $A$ be an NCCC. Let $(\mathcal F, \eps)$ be given.  Let $p_1, p_2, ..., p_n\in\Kzero(A)$ be such that $\{p_1, ..., p_n\}$ is a set of generators for $\rho_A(\Kzero(A))$ (as an abelian group) (still use the same notation for the image of $p_i$). Then, there is a finite set $\mathcal H\subseteq A^+$ such that for any $\sigma>0$, there is $\delta>0$ such that if $\kappa: \Kzero(A)\to\Ratn$ and $\tau\in \tr(A)$ satisfy
\begin{itemize}
\item[(1)] $\tau(h)>\sigma$, $h\in \mathcal H,$ and
\item[(2)] $\abs{\tau(p_i)-\kappa(p_i)}<\delta$, $i=1, ..., n,$
\end{itemize}
then there is $\tau'\in \tr(A)$ such that
\begin{itemize}
\item[(3)] $\abs{\tau'(f)-\tau(f)}<\eps$, $f\in\mathcal F,$ and
\item[(4)] $ \tau'(p_i)=\kappa(p_i)$, $i=1, ..., n.$
\end{itemize}
\end{lem}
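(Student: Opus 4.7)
The plan is to combine Lemma \ref{lbd-test}, which yields a uniform lower bound on the coefficients $\alpha_i$ in the trace decomposition from Corollary \ref{tr-measure}, with the linear-algebraic perturbation Lemma \ref{mat-pert}, which adjusts those coefficients to match prescribed values on $p_1,\ldots,p_n$.

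First I would apply Lemma \ref{lbd-test} to $(\mathcal F,\eps/2)$ to obtain a finite set $\mathcal H_0\subseteq A^+$ and a positive function $\sigma\mapsto\delta_0(\sigma)$ such that every $\tau\in\tr(A)$ with $\tau(h)>\sigma$ on $\mathcal H_0$ admits a replacement $\tilde\tau\in\tr(A)$ which agrees with $\tau$ on $\Kzero(A)$, is $(\eps/2)$-close to $\tau$ on $\mathcal F$, and satisfies $\alpha_i(\tilde\tau)>\delta_0(\sigma)$ for every $i$. I would set $\mathcal H:=\mathcal H_0$.

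Second, I would form the $(n+1)\times m$ rational matrix $P$ whose $(j,i)$ entry is $\mathrm{rank}(\Pi_i(p_j))/d_i$ for $j=1,\ldots,n$ and whose last row is identically $1$ (enforcing $\sum_i\alpha_i=1$). Writing $\tilde\tau=\sum_i\alpha_i\int_{X_i}\mathrm{tr}_i(\Pi_i(\cdot))\,d\mu_i$ via Corollary \ref{tr-measure}, the vector $\xi:=(\alpha_1,\ldots,\alpha_m)^{\mathrm{T}}$ satisfies $P\xi=(\tau(p_1),\ldots,\tau(p_n),1)^{\mathrm{T}}$. By the remark preceding the lemma, $\kappa$ is induced by some trace of $A$, so the target $\kappa^\flat:=(\kappa(p_1),\ldots,\kappa(p_n),1)^{\mathrm{T}}$ lies in the column span $V$ of $P$. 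Passing to a maximal linearly independent set of rows produces a surjective submatrix $P'$, and the corresponding coordinate projection restricts to an isomorphism $V\to\Real^{\mathrm{rank}(P)}$, so Lemma \ref{mat-pert} applies: with lower bound $\sigma_1:=\delta_0(\sigma)$ on the entries of $\xi$ and any chosen tolerance $\eps'>0$, it supplies $\delta'>0$ such that $\|P\xi-\kappa^\flat\|_\infty<\delta'$ yields $\zeta=(\beta_1,\ldots,\beta_m)^{\mathrm{T}}$ with $\beta_i\geq 0$, $P\zeta=\kappa^\flat$, and $\|\xi-\zeta\|_\infty<\eps'$. I would pick $\eps'$ so small that any such $\zeta$ yields a trace $(\eps/2)$-close to $\tilde\tau$ on $\mathcal F$, and set $\delta:=\delta'$.

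Finally, I would define $\tau'(a):=\sum_i\beta_i\int_{X_i}\mathrm{tr}_i(\Pi_i(a))\,d\mu_i$, reusing the measures $\mu_i$ produced for $\tilde\tau$. This is a tracial state, since $\beta_i\geq 0$, $\sum_i\beta_i=1$ from the bottom row of $P\zeta=\kappa^\flat$, and each summand is itself a tracial state. By construction $\tau'(p_j)=(P\zeta)_j=\kappa(p_j)$, giving (4); and the triangle inequality $|\tau'(f)-\tau(f)|\leq|\tau'(f)-\tilde\tau(f)|+|\tilde\tau(f)-\tau(f)|<\eps/2+\eps/2$ gives (3). I expect the main technical subtlety to be the verification that $\kappa^\flat$ lies in the image of $P$ so that Lemma \ref{mat-pert} can be applied, which is handled cleanly by the remark identifying $\kappa$ as induced by a trace of $A$.
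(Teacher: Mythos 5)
Your proof is correct and follows essentially the same route as the paper: apply Lemma \ref{lbd-test} to replace $\tau$ by a trace $\tilde\tau$ whose barycentric coordinates $\alpha_i$ are uniformly bounded below, then apply Lemma \ref{mat-pert} to perturb those coordinates to nonnegative rationals matching $\kappa$ on the $p_j$, and reassemble the trace using the same fibre measures $\mu_i$. The two small points where you deviate are both sound but handled more crudely in the paper: (a) you append a row of $1$'s to $P$ to force $\sum_i\beta_i=1$, whereas the paper recovers this afterwards from $\tau''(1_A)=\kappa([1_A])=1$, using that both $\tau''|_{\Kzero}$ and $\kappa$ factor through $\rho_A(\Kzero(A))$ and agree on the generators; (b) to meet the full-row-rank hypothesis of Lemma \ref{mat-pert} you pass to a maximal linearly independent set of rows and use the injectivity of the coordinate projection on the column span $V$, whereas the paper simply reduces, without loss of generality, to the case that $p_1,\dots,p_n$ are $\Ratn$-independent (which makes $P$ of rank $n$ automatically). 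Both devices are correct and equivalent in effect; the paper's WLOG is shorter, your version makes the reduction explicit, and in particular you spell out why $\kappa^\flat$ lies in the column span of $P$ (via the remark that $\kappa$ is induced by a trace), a point the paper leaves implicit.
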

\begin{proof}
Without loss of generality, one may assume that $\mathcal F$ is contained in the unit ball of $A$. One may also assume that $p_1, p_1, ..., p_n\in \rho_A(\Kzero(A))$ are $\Ratn$-independent.

Let $\mathcal H\subseteq A^+$ be the subset of Lemma \ref{lbd-test} with respect to $A$, $\mathcal F$, and $\eps/2$. Then $\mathcal H$ satisfies the lemma.

In fact, for any $\sigma>0$, let $\delta'$ be the constant of  Lemma \ref{lbd-test} with respect to $\sigma$.
Consider the $m\times n$ matrix
$$
P=\left(
\begin{array}{cccc}
\frac{\mathrm{rank}(\Pi_1(p_1))}{d_1} & \frac{\mathrm{rank}(\Pi_2(p_1))}{d_2} & \cdots & \frac{\mathrm{rank}(\Pi_m(p_1))}{d_m} \\
\frac{\mathrm{rank}(\Pi_1(p_2))}{d_1} & \frac{\mathrm{rank}(\Pi_2(p_2))}{d_2} & \cdots & \frac{\mathrm{rank}(\Pi_m(p_2))}{d_m} \\
\vdots & \vdots & \vdots  & \vdots \\
\frac{\mathrm{rank}(\Pi_1(p_n))}{d_1} & \frac{\mathrm{rank}(\Pi_2(p_n))}{d_2} & \cdots & \frac{\mathrm{rank}(\Pi_m(p_n))}{d_m}
\end{array}
\right).
$$
Since $p_1, p_1, ..., p_n\in \rho_A(\Kzero(A))$ are $\Ratn$-independent, the matrix $P$ has rank $n$ and it satisfies the assumptions of Lemma \ref{mat-pert}. Applying Lemma \ref{mat-pert} to $\delta'$ (in the place of $\sigma$) and $\eps/2m$, one obtains $\delta$. 

Let $(\kappa, \tau)$ be a pair satisfies 
$$\abs{\tau(p_i)-\kappa(p_i)}<\delta,\quad i=1, ..., n,$$
and
\begin{equation}\label{lbd-cond}
\tau(h)>\sigma,\quad h\in \mathcal H.
\end{equation}
By \eqref{lbd-cond} and Lemma \ref{lbd-test}, there is $\tau'\in\tr(A)$ such that 
\begin{equation}\label{keep-proj}
\tau'(p_i)=\tau(p_i),\quad i=1, ..., n,
\end{equation}
$$\abs{\tau'(f)-\tau(f)}<\eps/2,\quad f\in\mathcal F,$$
and
$$\alpha_{i}(\tau')>\delta',\quad\forall i.$$

Set $$\xi=(\alpha_1(\tau'), \alpha_2(\tau'), ..., \alpha_m(\tau'))^{\mathrm{T}}\quad\mathrm{and}\quad \kappa=(\kappa(p_1), \kappa(p_2), ..., \kappa(p_n))^{\mathrm{T}},$$ and then one has
\begin{eqnarray*}
\norm{P\xi-\kappa}_\infty & = & \max\{\tau'(p_i) - \kappa(p_i): i=1, ..., n\} \\
& = & \max\{\tau(p_i) - \kappa(p_i): i=1, ..., n\} \quad \textrm{(by \eqref{keep-proj})} \\
& < & \delta
\end{eqnarray*}
By Lemma \ref{mat-pert}, there is a positive vector $\zeta=(\beta_1, ... , \beta_m)$ such that 
\begin{equation}\label{match-K}
P\zeta=(\kappa(p_1), \kappa(p_2), ..., \kappa(p_n))^{\mathrm{T}}
\end{equation}
and
$$\abs{\alpha_{i}-\beta_{i}}<\eps/2m,\quad \forall i.$$
Consider the trace
$$\tau''=\sum\beta_{i}\mu_{i}.$$
It is clear that 
$$\abs{\tau''(f)-\tau(f)}<\eps,\quad f\in\mathcal F.$$
By \eqref{match-K}, one has
$$\tau''(p)=\kappa(p),\quad p\in\Kzero(A).$$
Moreover, the trace $\tau''$ is indeed a state since $\tau''(1_A)=\kappa([1_A])=1$, as desired.
\end{proof}

\begin{thm}\label{est-AM}
Let $A$ be a unital NCCC, and let $\sigma>0$. Let $(\mathcal F, \eps)$ be given. Let $p_1, p_2, ..., p_n\in\Kzero(A)$ be such that the set $\{p_1, ..., p_n\}$ generates the group $\rho_A(\Kzero(A))$ (let us still use the same notation for the image of $p_i$). Then, there is a finite set $\mathcal H\subseteq A^+$ such that for any $\sigma>0$, there is $\delta>0$ such that if $\kappa: \Kzero(A)\to\Ratn$ and $\tau\in \tr(A)$ satisfy
\begin{itemize}
\item[(1)] $\abs{\tau(p_i)-\kappa[p_i]}<\delta$, $i=1, ..., n$, and 
\item[(2)] $\tau(h)>\sigma$, $h\in \mathcal H,$
\end{itemize} 
then there is a homomorphism $\phi: A\to Q$ such that
\begin{itemize}
\item[(3)] $[\phi]_0=\kappa$, and
\item[(4)] $\abs{\tau(f)-\mathrm{tr}(\phi(f))}<\eps$, $f\in\mathcal F,$
\end{itemize}
where $\mathrm{tr}$ is the canonical trace of $Q$. Moreover, $\phi$ can be chosen to have finite dimensional range.
\end{thm}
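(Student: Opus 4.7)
The plan is to combine the tracial-perturbation result of Lemma \ref{pert-tr} with the exact existence result of Lemma \ref{est-exact}, each applied with tolerance $\eps/2$. Given $(\mathcal F, \eps)$ and the generators $p_1, \ldots, p_n$ of $\rho_A(\Kzero(A))$, I first apply Lemma \ref{pert-tr} to the pair $(\mathcal F, \eps/2)$ and to the same generating set. This yields the finite set $\mathcal H \subseteq A^+$ required by the theorem together with, for each $\sigma > 0$, a constant $\delta > 0$. Whenever $(\kappa, \tau)$ satisfies hypotheses (1) and (2), Lemma \ref{pert-tr} produces $\tau' \in \tr(A)$ with $\tau'(p_i) = \kappa(p_i)$ for every $i$ and $|\tau'(f) - \tau(f)| < \eps/2$ for all $f \in \mathcal F$.

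The next observation is that $\tau'$ and $\kappa$, agreeing on the generating set $\{p_1,\ldots,p_n\}$ of $\rho_A(\Kzero(A))$, must agree on all of $\Kzero(A)$. Every tracial state factors through $\rho_A$ by the very definition of $\rho_A$, and, as noted in the remark preceding Lemma \ref{pert-tr}, $\kappa$ itself factors through $\rho_A(\Kzero(A))$ because $A$ is of type I and hence exact. Consequently $\tau'(p) = \kappa(p)$ for every $p \in \Kzero(A)$, which is precisely the compatibility hypothesis of Lemma \ref{est-exact} for the pair $(\kappa, \gamma)$ with $\gamma(\tilde f) := \tau'(f)$.

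Finally, I apply Lemma \ref{est-exact} to this exactly compatible pair with tolerance $\eps/2$, obtaining a unital homomorphism $\phi: A \to Q$ with finite-dimensional range such that $[\phi]_0 = \kappa$ and $|\mathrm{tr}(\phi(f)) - \tau'(f)| < \eps/2$ for $f \in \mathcal F$. The triangle inequality then gives $|\mathrm{tr}(\phi(f)) - \tau(f)| < \eps$, which is conclusion (4); conclusion (3) is immediate from $[\phi]_0 = \kappa$. I do not expect a serious obstacle: Lemma \ref{pert-tr} has already absorbed the delicate task of matching tracial values on the $\Kzero$-generators while keeping values on $\mathcal F$ nearly fixed under a uniform lower bound on $\mathcal H$, and Lemma \ref{est-exact} already supplies a finite-dimensional representation once compatibility is exact. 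The only subtlety worth flagging is the factorization of $\kappa$ through $\rho_A(\Kzero(A))$, which is what upgrades equality on the finite generating set to equality on all of $\Kzero(A)$.
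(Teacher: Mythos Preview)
Your proposal is correct and follows essentially the same route as the paper: apply Lemma~\ref{pert-tr} with tolerance $\eps/2$ to obtain $\mathcal H$ and, for each $\sigma$, the constant $\delta$; then, once $\tau'$ agrees with $\kappa$ on the generators, feed the exactly compatible pair into Lemma~\ref{est-exact} with tolerance $\eps/2$ and finish with the triangle inequality. Your explicit remark that agreement on the generating set propagates to all of $\Kzero(A)$ because both $\tau'$ and $\kappa$ factor through $\rho_A(\Kzero(A))$ is a detail the paper leaves implicit, but it is indeed the point that makes the hand-off to Lemma~\ref{est-exact} legitimate.
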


\begin{proof}
Let $\mathcal H\subseteq A^+$ denote the finite set of Lemma \ref{pert-tr} with respect to the data $A$, $\mathcal F$, $\eps/2$. Then $\mathcal H$ satisfies the theorem.

Indeed, given $\sigma>0$, consider the constant $\delta$ of Lemma \ref{pert-tr} with respect to $\sigma$.
Let $(\kappa, \tau)$ be a pair as above---$\delta$-compatible on $p_i$, $i=1, ..., n$, and such that $$\tau(h)>\sigma,\quad h\in\mathcal H.$$ By Lemma \ref{pert-tr}, there is $\tau'\in \tr(A)$ such that the pair $(\kappa, \tau')$ is exactly  compatible on the $p_i$ and
$$\abs{\tau'(f)-\tau(f)}<\eps/2,\quad f\in\mathcal F.$$ Then, by Lemma \ref{est-exact}, there is a homomorphism $\phi: A\to Q$ such that
$$[\phi]_0=\kappa$$
and
$$\abs{\mathrm{tr}(\phi(f))-\tau'(f)}<\eps/2,\quad f\in\mathcal F,$$
and therefore satisfying the condition of the theorem.
\end{proof}

\section{A decomposition theorem and a uniqueness theorem}

Recall (see, for instance, \cite{HV-Marriage-Lemma})
\begin{lem}[The Marriage Lemma]\label{PL}
Let $A$ and $B$ be two finite subsets of a metric space.  Suppose that for any set $X\subseteq A$, one has 
$$\#\{y\in B: \mathrm{dist}(y,X)<\eps\}\geq \#X,$$ 
then there is a set $B'\subseteq B$ and a one-to-one pairing between $A$ and $B'$ such that the distance between  the points in each pair is at most $\eps$.  
\end{lem}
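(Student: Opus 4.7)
The plan is to reduce the statement to Hall's marriage theorem for bipartite graphs. First I would build the bipartite graph $G$ with vertex classes $A$ and $B$, placing an edge between $a\in A$ and $b\in B$ precisely when $\mathrm{dist}(a,b)<\eps$. With this definition, the neighborhood in $B$ of any subset $X\subseteq A$ is exactly $N(X)=\{y\in B:\mathrm{dist}(y,X)<\eps\}$, so the hypothesis of the lemma is verbatim Hall's condition $\#N(X)\geq \#X$ for every $X\subseteq A$. Hall's theorem then yields an injection $\psi:A\to B$ with $(a,\psi(a))$ an edge of $G$, and taking $B'=\psi(A)$ gives the required pairing with pairwise distances less than $\eps$.

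If one prefers to give a self-contained argument rather than cite \cite{HV-Marriage-Lemma}, the natural route is induction on $\#A$. The base cases $\#A\in\{0,1\}$ are immediate. For the inductive step, one distinguishes two cases. In the \emph{surplus} case, where $\#N(X)\geq \#X+1$ for every nonempty proper $X\subsetneq A$, one picks any $a\in A$, pairs it with any $b\in N(\{a\})$, and checks that the modified hypothesis still holds for $A\setminus\{a\}$ and $B\setminus\{b\}$ (each neighborhood loses at most one element, but had at least one element of slack), so induction applies. In the \emph{critical} case, where some nonempty proper $X\subsetneq A$ satisfies $\#N(X)=\#X$, one applies the inductive hypothesis separately to the pair $(X,N(X))$ and to the pair $(A\setminus X,B\setminus N(X))$; for the latter, the hypothesis must be verified by a short calculation using that $X$ is critical, and then the two matchings combine into a single pairing on $A$.

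The only step that requires any care is checking Hall's condition for $A\setminus X$ against $B\setminus N(X)$ in the critical case: for $Y\subseteq A\setminus X$, one needs $\#\bigl((N(X\cup Y))\setminus N(X)\bigr)\geq \#Y$, which follows from $\#N(X\cup Y)\geq \#X+\#Y$ together with $\#N(X)=\#X$. Everything else is bookkeeping, and since the statement is a well-known combinatorial fact (and is quoted from \cite{HV-Marriage-Lemma} in the paper), the cleanest presentation is simply to invoke Hall's theorem on the graph $G$ above.
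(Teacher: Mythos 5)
Your reduction to Hall's theorem is correct, and your self-contained induction sketch (splitting into the surplus case and the critical case, and in the latter verifying Hall's condition for $A\setminus X$ against $B\setminus N(X)$) is precisely the Halmos--Vaughan argument. The paper itself states Lemma~\ref{PL} without proof, quoting it from \cite{HV-Marriage-Lemma}, and then uses the same surplus/critical dichotomy in its proof of the generalization Lemma~\ref{PLG}, so your approach coincides with the one the paper relies on.
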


The following statement is a generalization of the Marriage Lemma due to Gong in a unpublished manuscript:
\begin{lem}\label{PLG}
Let $A, B$ be two finite subsets of a metric space with subsets $A_1\subset A$ and $B_1\subset B$.  Suppose that for each $X\subset A_1$, one has
$$\# \{y\in B:  \mathrm{dist}(y,X)<\eps\}\geq \#X $$
and for each $Y\subset B_1$, one has
$$\# \{x\in A: \mathrm{dist}(x,Y)<\eps\}\geq \#Y.$$
Then there are sets $A_2\subseteq A$ and $B_2\subseteq B$ such that
$$A_1\subseteq A_2\quad\textrm{and}\quad B_1\subseteq B_2,$$
and the elements of $A_2$ and $B_2$ can be (bijectively) paired to within $\eps$.
\end{lem}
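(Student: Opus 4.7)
My plan is to translate the hypotheses into two applications of the original Marriage Lemma (Lemma \ref{PL}) and then splice the two resulting pairings together via a symmetric-difference argument on the bipartite graph $G$ whose vertex classes are $A$ and $B$ and whose edges are the $\eps$-close pairs. Applying Lemma \ref{PL} with $A_1$ in the role of $A$ (the first displayed inequality is exactly the Hall condition) yields an injective matching $M_A\colon A_1\hookrightarrow B$ with $\mathrm{dist}(x,M_A(x))<\eps$; symmetrically, the second inequality produces an injective $\eps$-close matching $M_B\colon B_1\hookrightarrow A$. Viewed as edge sets, $M_A$ and $M_B$ are two matchings in $G$ that saturate $A_1$ and $B_1$ respectively, and the problem reduces to finding a single matching in $G$ whose vertex set contains $A_1\cup B_1$.

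I would then analyze the union $M_A\cup M_B$. Every vertex is incident to at most one $M_A$-edge and one $M_B$-edge, so $M_A\cup M_B$ has maximum degree $2$; it therefore decomposes into vertex-disjoint components that are either edges shared by both matchings (the common part $M_0:=M_A\cap M_B$), even cycles alternating between $M_A$- and $M_B$-edges, or simple paths alternating the same way. I keep all of $M_0$, and in each remaining component make a monochromatic choice: in an even cycle, either color saturates every vertex of the cycle; in an odd-length path, the color appearing at both endpoints saturates every vertex of the path.

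The only delicate case is an even-length path $v_0 - v_1 - \cdots - v_k$, whose endpoints necessarily lie on the same side of the bipartition of $G$. The alternation of colors forces $v_0$ and $v_k$ to be saturated by different matchings in the component (one by $M_A$ only, the other by $M_B$ only), and the inclusions $A_1\subseteq V(M_A)$ and $B_1\subseteq V(M_B)$ then imply that at most one of $\{v_0,v_k\}$ can lie in $A_1\cup B_1$; I would pick the color of the sub-matching that saturates that distinguished endpoint. Putting everything together yields a matching $M$ in $G$ with $A_1\cup B_1\subseteq V(M)$, and setting $A_2=V(M)\cap A$ and $B_2=V(M)\cap B$ gives the desired pairing to within $\eps$. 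The main obstacle is exactly this even-length path case, and the key observation that disposes of it is the side/parity compatibility just described—naively choosing a color in each component would typically leave one endpoint unsaturated, but the hypotheses rule out that endpoint ever being in $A_1\cup B_1$.
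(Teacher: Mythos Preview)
Your proof is correct and takes a genuinely different route from the paper's argument. The paper proceeds by a double induction on $(\#A_1,\#B_1)$, splitting into the case where every nonempty subset of $A_1$ (and of $B_1$) satisfies the Hall inequality with one to spare---so that a single pair can be peeled off---and the case of a tight subset, which is matched directly by the classical Marriage Lemma and then excised to shrink the parameters. You instead invoke Lemma~\ref{PL} twice at the outset to produce the two partial matchings $M_A$ and $M_B$, and then use the alternating path/cycle decomposition of $M_A\cup M_B$ to splice them into a single matching; this is essentially the proof of the Mendelsohn--Dulmage theorem. The observation you isolate for the even-length path---that its two endpoints lie on the same side of the bipartition, and since one is missed by $M_A$ and the other by $M_B$, at most one can belong to $A_1\cup B_1$---is exactly what makes the monochromatic choice succeed in every component. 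Your approach is shorter and more structural, and it shows transparently that the final matching can be taken inside $M_A\cup M_B$; the paper's inductive argument, by contrast, is closer in spirit to the standard inductive proof of Hall's theorem itself and does not rely on the symmetric-difference trick.
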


\begin{proof}

The proof is similar to that of \cite{HV-Marriage-Lemma}.  We spell out the details for reader's convenience.  In the case $\#(B_1)=0$, i.e., $B_1$ is empty, then this is the case of the classical Marriage Lemma (Lemma \ref{PL}) in the case of $A_1$ and $B$.  (For this case, $X_0$ can be chosen to be $A_1$.)

We define partial order for $(m,n)\in \Int^+\times\Int^+$ (where $\Int^+=\{0,1,2,...\}$ is the set of positive integers) by $(m,n)\leq (m_1,n_1)$ if $m\leq m_1$ and $n\leq n_1$.  We denote $(m,n)< (m_1,n_1)$ if $(m,n)\leq (m_1,n_1)$ and $(m,n)\not= (m_1,n_1)$.  We will prove the lemma by induction on $\#(A_1)$ and $\#(B_1)$.  That is, we assume that if the result is true for the case $\big(\#(A_1),\#(B_1)\big) < (m,n)$ and prove the lemma to be true for the case
$$\#(A_1)=m\quad \mbox{and} \quad \#(B_1)=n.$$

The rest of the proof divides into two cases.

\underline{Case 1}.  For any nonempty set $X\subseteq A_1$,
$$\#\{y\in B;~ \mathrm{dist}(y,X)<\eps\}\geq \#(X)+1$$
and for  any nonempty set $Y\subseteq B_1$,
$$\#\{x\in A;~ \mathrm{dist}(x,Y)<\eps\}\geq \#(Y)+1.$$
Choose any $a\in A_1$, there is a $b\in B$ such that
$\mathrm{dist}(a,b)<\eps$.  We will pair $a\in A_1$ with $b\in B$.  Then let $\tilde A=A\setminus \{a\}$ with $\tilde A_1=A_1\setminus \{a\}$, and
$\tilde B=B\setminus \{b\}$ with $\tilde B_1=B_1$ if $b\not\in B_1$ or $\tilde B_1=B_1\setminus \{b\}$ if $b\in B_1$.  It is easy to verify that $\tilde A\supseteq \tilde {A_1}$ and $\tilde B\supseteq \tilde {B_1}$
satisfy the condition of the lemma with $\big(\#(\tilde A_1),\#(\tilde B_1)\big)=$
 either $(m-1,n)$ or $(m-1,n-1)$.  That is, $\big(\#(A_1),\#(B_1)\big) < (m,n)$.  By the induction assumption, there is a subset $\tilde A_2\supseteq \tilde A_1$ and $\tilde B_2\supseteq \tilde B_1$ such that $\tilde X_0$ can be paired one by one within $\eps$.  Then the sets $A_2=\tilde A_2\cup \{a\}$ and $B_2=\tilde B_2\cup \{b\}$ satisfy the lemma.

 \underline{Case 2}. The conditions of Case 1 do not hold.  Then either there is $X\subseteq A_1$ or $Y\subseteq B_1$ does not satisfy the condition in Case 1.  That is, either there is $X\subseteq A_1$ such that
 $$\#\{y\in B;~ \mathrm{dist}(y,X)<\eps\}= \#(X),$$
 or there is  $Y\subseteq B_1$ such that
 $$\#\{x\in A;~ \mathrm{dist}(x,Y)<\eps\}= \#(Y).$$
 
Without loss of generality, let us assume there is $Y_1\subseteq B_1$ such that
 $$\#\{ x\in A;~ \mathrm{dist}(x, Y_1)<\eps\} =\# (Y_1).$$
 Let $X_1=\{x\in A;~ \mathrm{dist}(x, Y_1)<\eps\}$.  Then for any subset $Z\subseteq Y_1$, $\{ x\in A;~ \mathrm{dist}(x, Z)<\eps\}\subseteq X_1$ and therefore $\{ x\in A;~ \mathrm{dist}(x, Z)<\eps\}=\{ x\in X_1;~ \mathrm{dist}(x, Z)<\eps\}$ which has at least $\#(Z)$ elements.  That is, $Y_1$ and $X_1$ satisfy  the condition for the classical Pairing Lemma with $Y_1=A,~ X_1=B$ as in 2.10.  Hence $Y_1$ and $X_1$ can be paired one by one to within $\eps$.

 Let $\tilde B=B\setminus Y_1$ with subset $\tilde B_1=B_1\setminus Y_1$, and let
$\tilde A=A\setminus X_1$ with subset $\tilde A_1=A_1\setminus (X_1\cap A_1)$.
Let us verify that $\tilde B\supseteq \tilde B_1$ and $\tilde A\supseteq \tilde A_1$ satisfy the condition of the lemma.  Let $Y\subseteq \tilde B_1$.  Then
$$\{ x\in  A;~ \mathrm{dist}(x, Y\cup Y_1)<\eps\}= \{ x\in \tilde A_1;~ \mathrm{dist}(x, Y)<\eps\}\cup X_1.$$
Since $\#(X_1)=\#(Y_1)$ and $\{ x\in  A;~ \mathrm{dist}(x, Y\cup Y_1)<\eps\}\geq \#(Y)+\#(Y_1)=
\#(Y)+\#(X_1)$, we have,  $\{ x\in \tilde A_1;~ \mathrm{dist}(x, Y)<\eps\}\geq  \#(Y)$.

Let $X\subset \tilde A_1$.  Since for point $y\in Y_1,~ \mathrm{dist}(y, X)\geq \eps$, we have
$$\# \{ y\in \tilde B;~ \mathrm{dist}(y,X)<\eps\}=\# \{ y\in  B;~ \mathrm{dist}(y,X)<\eps\} \geq \#(X).$$
So the conditions of our lemma hold for $\tilde A\supseteq \tilde A_1$ and $\tilde B\supseteq \tilde B_1$ with
$$\#(\tilde B_1)=\#(B_1)-\#(Y_1)<n
\quad \mbox{and }\quad
\#(\tilde A_1)\leq\#(A_1)\leq m.
$$
So by the inductive assumption, there exist
$\tilde A_2\supseteq \tilde A_1$ and $\tilde B_2\supseteq \tilde B_1$ such that
$\tilde A_2$ and $\tilde B_2$ can be paired element by element to within $\eps$.  Then the sets
$A_2=\tilde A_2\cup X_1$ and $B_2=\tilde B_2\cup Y_1$ satisfy the lemma.
\end{proof}

\begin{defn}[See 2.2 of \cite{GLN-TAS}]
Let $A$ be a unital C*-algebra with $\tr(A)\neq\O$. Recall that each self-adjoint $a\in A$ induces $\hat{a}\in \aff(\tr(A))$ by $\hat{a}(\tau)=\tau(a)$, $\tau\in\tr(A)$. Denote this map by $\rho_A$.

Denote by $A^+_{1}$ the set of positive elements with norm at most $1$, and then denote by $A^+_{1, q}$ the image of $A^+_{1}$ in $\aff(\tr(A))$ under the canonical map $\rho_A$.
\end{defn}

As a consequence of this generalized version of the Marriage Lemma, one has

\begin{lem}\label{pert-disk}
Let $A=B\oplus_{\MC{k}{S^{n-1}}} \MC{k}{D^n}$ be an NCCC with $n\geq 1$. Let $\Delta: A^+_{1, q}\setminus\{0\}\to (0, 1)$ be an order preserving map. Let $\mathcal F\subseteq A$ be a finite set, and let $\eps>0$, $1>\gamma>0$, and $M\in\mathbb N$. There are finite sets $\mathcal H_1, \mathcal H_2 \subseteq A^+$ and $\delta>0$ such that for any unital homomorphisms $\phi, \psi: A\to \Mat{m}$ such that
\begin{itemize}
\item[(1)] $\tau(\phi(h)), \tau(\psi(h))>\Delta(\hat{h})$, $h\in \mathcal H_1$, and
\item[(2)] $\abs{\tau(\phi(h))-\tau(\psi(h))}<\delta$, $h\in\mathcal H_2$,
\end{itemize}
there are unital homomorphisms $\phi', \psi': A\to \Mat{m}$ such that
\begin{itemize}
\item[(3)] $\norm{\phi'(f)-\phi(f)}<\eps$, $\norm{\psi'(f)-\psi(f)}<\eps$, $f\in\mathcal F,$
\item[(4)] $\mathrm{SP}(\phi')\cap \mathrm{B}(1-\gamma)=\mathrm{SP}(\psi')\cap \mathrm{B}(1-\gamma)$, and each point in $\mathrm{SP}(\phi')\cap \mathrm{B}(1-\gamma)$ has multiplicity at least $M$, 
\end{itemize}
where $B(1-\gamma)\subseteq D^n$ is the closed ball with radius $1-\gamma$.
\end{lem}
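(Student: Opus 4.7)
The plan is to discretize $B(1-\gamma)$ by a sufficiently fine finite cover, use hypothesis (2) together with appropriately chosen bump functions in $\mathcal H_2$ to show that the spectral measures of $\phi$ and $\psi$ in each cell nearly agree, apply the generalized Marriage Lemma \ref{PLG} to pair up the spectra inside $B(1-\gamma)$, and then perturb $\phi$ and $\psi$ so that each matched pair collapses onto a common representative. Hypothesis (1) with $\mathcal H_1$ will supply the lower bounds needed for the multiplicity condition in clause (4).

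\textbf{Construction of test sets.} Choose $\eta>0$ much smaller than a modulus of uniform continuity for the $D^n$-factors of the functions in $\mathcal F$, so that moving any point-evaluation by at most $2\eta$ alters $\phi(f),\psi(f)$ by less than $\eps$. Cover the enlargement $B(1-\gamma/2)\supseteq B(1-\gamma)$ by finitely many open balls $U_1,\ldots,U_L$ in $D^n$ of diameter at most $\eta$, with centers $y_j\in U_j$. For each $j$ fix a bump $\tilde h_j:D^n\to[0,1]$ equal to $1$ on $U_j$ and supported in a slight enlargement, and put $h_j=(0,\tilde h_j\otimes 1_{\Mat{k}})\in A$. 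Collect these into $\mathcal H_2$; then $(m/k)\tau(\phi(h_j))$ approximates the total spectral multiplicity of $\phi$ in a thickening of $U_j$, and likewise for $\psi$. For $\mathcal H_1$ take analogous bumps whose images $\hat h_j$ in $A^+_{1,q}$ are fixed nonzero, so that the positive number $\Delta(\hat h_j)$ gives a uniform lower bound on $\tau(\phi(h_j))$ and $\tau(\psi(h_j))$, translating to a lower bound on the multiplicity of $\phi$ and $\psi$ in each $U_j$ of roughly $\Delta(\hat h_j)\cdot m/k$.

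\textbf{Pairing and perturbation.} With $\delta$ chosen small compared to $\eta$, the values of $\Delta$ on the $\hat h_j$, and $1/M$, hypothesis (2) implies that for any union $W$ of the $U_j$'s the spectral multiplicities of $\phi$ and $\psi$ on $W$ differ by less than any prescribed integer threshold (the hypotheses (1) are vacuous unless $m$ is correspondingly large, so $\delta m$ can safely be demanded small in absolute terms). This verifies the two-sided Hall condition needed to apply Lemma \ref{PLG} to $A_1=\mathrm{SP}(\phi)\cap B(1-\gamma)$ and $B_1=\mathrm{SP}(\psi)\cap B(1-\gamma)$ inside the full $D^n$-parts of the spectra, producing an $\eta$-bijective pairing between supersets $A_2\supseteq A_1$ and $B_2\supseteq B_1$. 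Passing from the classical Marriage Lemma to its generalized form is essential here because spectral points of one homomorphism near $\partial B(1-\gamma)$ may need to pair with points of the other lying just outside $B(1-\gamma)$. Collapsing all paired point-evaluations within each $U_j$ to the common representative $y_j$ produces $\phi'$ and $\psi'$ with identical spectra inside $B(1-\gamma)$ and with $\norm{\phi'(f)-\phi(f)},\norm{\psi'(f)-\psi(f)}<\eps$ for $f\in\mathcal F$.

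\textbf{Multiplicity and main obstacle.} The chief technical issue is enforcing that every point of $\mathrm{SP}(\phi')\cap B(1-\gamma)$ has multiplicity at least $M$. Hypothesis (1) via the chosen $\mathcal H_1$ furnishes each retained $y_j\in B(1-\gamma)$ with collapsed multiplicity at least $\Delta(\hat h_j)\cdot m/k$, which exceeds $M$ once $\delta$ is taken small enough that the lower-bound hypotheses force $m$ large. Any residual $y_j\in B(1-\gamma)$ whose accumulated mass still falls short of $M$ is relocated via an $O(\eta)$ perturbation to a representative in the annulus $B(1)\setminus B(1-\gamma)$, which preserves clause (3) and removes that representative from the match-requirement in (4). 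The delicate part of the argument is coordinating the cover scale $\eta$, the trace tolerance $\delta$, and the integer $M$ against the a priori given order-preserving function $\Delta$; once this bookkeeping is in place the pairing mechanism of Lemma \ref{PLG} does the real work.
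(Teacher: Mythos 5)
Your outline — bump functions, the generalized Marriage Lemma applied to $\mathrm{SP}(\phi)$ and $\mathrm{SP}(\psi)$, followed by collapsing matched spectral points onto common representatives — matches the paper's strategy, and your observation that the \emph{generalized} Marriage Lemma is needed because points near $\partial \mathrm{B}(1-\gamma)$ must be allowed to pair outward is exactly right. But the bookkeeping has three genuine gaps, the first of which is fatal as written.

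The central error is the parenthetical claim that ``$\delta m$ can safely be demanded small in absolute terms.'' The constant $\delta$ must be fixed before $m$ is seen, and $m$ is unbounded, so $m\delta$ is not small; the mismatch in spectral multiplicities between $\phi$ and $\psi$ in a region is of order $m\delta$ and grows with $m$. The hypotheses (1) do not ``force $m$ large'' in a way that helps — they are fixed lower bounds that can be satisfied already for moderate $m$, and in any case increasing $m$ makes the discrepancy $m\delta$ \emph{larger}. The paper's actual device is to \emph{absorb} the $m\delta$ excess rather than shrink it: for each subset $\{y_1,\dots,y_t\}$ of the $\eta$-net it places into $\mathcal H_1$ a bump $g_{\{y_1,\dots,y_t\}}$ supported in the annulus $\{y_1,\dots,y_t\}_{3\eta}\setminus\{y_1,\dots,y_t\}_{2\eta}$ and chooses $\delta\le\min\Delta(\hat{g}_{\{y_1,\dots,y_t\}})$. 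Hypothesis (1) then guarantees that $\psi$ has at least $m\Delta(\hat{g}_{\{y_1,\dots,y_t\}})\ge m\delta$ spectral points in that annulus, and these extra points absorb the $m\delta$ excess in the chain $\#Y\le m\,\mathrm{tr}(\phi(h))\le m\,\mathrm{tr}(\psi(h))+m\delta\le \#(\mathrm{SP}(\psi)\cap Y_{4\eta})$. Without this annulus absorption, the Hall condition cannot be verified.

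Relatedly, the test functions in $\mathcal H_2$ must be indexed by \emph{subsets} of the $\eta$-net, not by single cells $U_j$. The Hall condition requires controlling $\#Y$ for an arbitrary subset $Y$ of $\mathrm{SP}(\phi)\cap\mathrm{B}(1-\gamma)$ via one trace comparison. If you only have per-cell bumps and sum over the cells meeting $Y$, the error accumulates to (number of cells)$\cdot m\delta$, defeating the absorption. The paper therefore uses the single test function $h_{\{y_1,\dots,y_t\}}$ attached to the whole subset.

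Finally, the multiplicity argument does not close. You estimate each retained point's multiplicity as roughly $\Delta(\hat{h}_j)\cdot m/k$, which exceeds $M$ only when $m$ is large, and nothing in the hypotheses forces $m\ge Mk/\Delta(\hat{h}_j)$. The paper instead puts, for each cell $U$, $M$ bump functions $s_{U,1},\dots,s_{U,M}\in\mathcal H_1$ with pairwise disjoint supports inside $U$; hypothesis (1) then forces at least $M$ \emph{distinct} spectral points in each $U$, for every $m$, and collapsing them to a common representative yields multiplicity $\ge M$ independently of $m$. Your fallback of relocating under-filled representatives to the annulus $\mathrm{B}(1)\setminus\mathrm{B}(1-\gamma)$ doesn't help either, since after such relocation the condition ``every point of $\mathrm{SP}(\phi')\cap\mathrm{B}(1-\gamma)$ has multiplicity $\ge M$'' must still hold for the points that remain, and the relocation itself does not increase anyone's multiplicity.
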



\begin{proof}
Let $(\mathcal F, \eps)$ be given. For each $f\in\mathcal F$, write $f=(f_B, f_D)$ where $f_B\in B$ and $f_D\in \MC{k}{D^n}$.
Set $$\mathcal F_B=\{f_B: f\in\mathcal F\}\quad\textrm{and}\quad \mathcal F_D=\{f_D: f\in\mathcal F\}.$$ 
Choose $\eta>0$ such that for any $x, y\in D^n$ with $\mathrm{dist}(x, y)<4\eta$, one has
\begin{equation}\label{choose-eta}
\norm{f_D(x)-f_D(y)}<\eps,\quad f\in\mathcal F.
\end{equation}
Without loss of generality, one may assume that $5\eta<\gamma$.

Fix an $\eta$-dense finite subset $\{x_1, x_2, ..., x_l\}\subseteq \mathrm{B}(1-\gamma)$. For each subset $$\{y_1, y_2, ..., y_t\}\subseteq\{x_1, x_2, ..., x_l\},$$ 
define
$$ h_{\{y_1, ..., y_t\}}=\max\{1-\frac{1}{\eta}\mathrm{dist}(x, \{y_1, ..., y_t\}_\eta), 0\},$$
where $Y_\eta$ denotes the $\eta$-neighborhood of $Y$ if $Y$ is a subset of a metric space.
Using
$$\{y_1, y_2, ..., y_t\}_{2\eta} \neq \{y_1, y_2, ..., y_t\}_{3\eta},$$
choose a positive function $g_{\{y_1, ..., y_t\}}\in \mathrm{C}(D^n)$ such that $0<g_{\{y_1, ..., y_t\}}\leq 1$ and
$$\mathrm{supp}(g_{\{y_1, ..., y_t\}})\subseteq \{y_1, ..., y_t\}_{3\eta} \setminus \{y_1, ..., y_t\}_{2\eta}.$$
For functions $h_{y_1, ..., y_t}$ and $g_{y_1, ..., y_t}$, regard them as the elements 
$$(0, h_{y_1, ..., y_t}1_{\MC{k}{D^n}}),\  (0, g_{y_1, ..., y_t}1_{\MC{k}{D^n}}) \in A=B\oplus_{\MC{k}{S^{n-1}}}\MC{k}{D^n},$$
and still denote them by $h_{y_1, ..., y_t}$ and $g_{y_1, ..., y_t}$, respectively.

Put 
$$\tilde{\mathcal H}_1=\{g_{\{y_1, ..., y_t\}}: \{y_1, ..., y_t\}\subseteq\{x_1, x_2, ..., x_l\}\},$$
$$\mathcal H_2=\{h_{\{y_1, ..., y_t\}}: \{y_1, ..., y_t\}\subseteq\{x_1, x_2, ..., x_l\}\},$$
and
\begin{equation}\label{defn-delta}
\delta=\min\{\Delta(\hat{g}_{\{y_1, ..., y_t\}}): \{y_1, ..., y_t\}\subseteq\{x_1, ..., x_l\}\}\}.
\end{equation}
Also pick a finite open cover $\mathcal U$ of $B(1-\gamma)$ such that each $U\in\mathcal U$ has diameter at most $\eta$,
$$\bigcup_{U\in\mathcal U} U\subseteq B(1-\gamma/2)\quad\textrm{and}\quad U\setminus \bigcup_{V\neq U} V \neq \O.$$ Then choose continuous functions $s_{U, 1}, ..., s_{U, M}\in\mathrm{C}(D^n)$ such that 
$$\mathrm{supp}(s_{U, i})\subseteq \bigcup_{V\neq U} V\quad\textrm{and}\quad \mathrm{supp}(s_{U, i})\cap \mathrm{supp}(s_{U, j})=\O,\ i\neq j.$$
Regard each $s_{U, i}$ as an element of $A$, and put
$$\mathcal S= \{s_{U, i}: U\in\mathcal U, i=1, ..., M\}.$$

Then $\mathcal H_1:=\tilde{\mathcal H}_1\cup\mathcal S$, $\mathcal H_2$ and $\delta$ have the property stated in the conclusion of the lemma. 

Indeed, let $\phi, \psi: A\to \Mat{m}$ be unital homomorphisms
satisfying
\begin{equation}\label{density-cond-ML}
 \tau(\phi(h)), \tau(\psi(h))>\Delta(\hat{h}),\quad h\in \mathcal H_1,
\end{equation} 
and
\begin{equation}\label{tr-cond-ML}
 \abs{\tau(\phi(h))-\tau(\psi(h))}<\delta,\quad h\in \mathcal H_2.
\end{equation}

Let $Y\subseteq \mathrm{SP}(\phi)\cap \mathrm{B}(1-\gamma)$ be an arbitrary subset. Let 
$$\{y_1, y_2, ..., y_t\}\subseteq \{x_1, x_2, ..., x_l\}$$
denote the subset of the points $y$ satisfying $\mathrm{dist}(y, Y)<\eta$. Then 
\begin{eqnarray*}
\#Y & \leq & m \cdot \mathrm{tr}(\phi(h_{\{y_1, ..., y_t\}})) \\
      & \leq & m \cdot \mathrm{tr}(\psi(h_{\{y_1, ..., y_t\}})) + m \cdot \delta \quad\quad \textrm{(by \eqref{tr-cond-ML})}\\
      & \leq & m \cdot \mathrm{tr}(\psi(h_{\{y_1, ..., y_t\}})) + m \cdot \Delta(\hat{g}_{y_1, ..., y_t})\quad\quad \textrm{(by \eqref{defn-delta})}\\
      & \leq & \#(\mathrm{SP}(\psi)\cap\{y_1, ..., y_t\}_{2\eta}) + m \cdot \Delta(\hat{g}_{y_1, ..., y_t}) \\
      & \leq & \#(\mathrm{SP}(\psi)\cap\{y_1, ..., y_t\}_{3\eta}) \quad\quad \textrm{(by \eqref{density-cond-ML})}\\
      & \leq & \#(\mathrm{SP}(\psi)\cap Y_{4\eta}).
\end{eqnarray*}

The same argument shows that 
$$\#X\leq \#(\mathrm{SP}(\phi)\cap X_{4\eta}),\quad X\subseteq \mathrm{SP}(\psi)\cap\mathrm{B}(1-\gamma).$$

Then, applying Lemma \ref{PLG} with 
$$A=\mathrm{SP}(\phi)\cap B(1-\eta),\quad A_1=\mathrm{SP}(\phi)\cap B(1-\gamma)$$ and 
$$B=\mathrm{SP}(\psi)\cap B(1-\eta),\quad B_1=\mathrm{SP}(\psi)\cap B(1-\gamma),$$
one obtains $A_2$ and $B_2$ such that 
\begin{equation}\label{pairing-A}
\mathrm{SP}(\phi)\cap B(1-\gamma)\subseteq A_2\subset \mathrm{SP}(\phi)\cap B(1-\eta) 
\end{equation}
and
\begin{equation}\label{pairing-B}
\mathrm{SP}(\psi)\cap B(1-\gamma)\subseteq B_2\subset \mathrm{SP}(\psi)\cap B(1-\eta),
\end{equation}
and $A_2$ and $B_2$ can be paired up to $4\eta$.

Write $$A_2=\{z_{\phi, 1}, z_{\phi, 2}, ..., z_{\phi, s}\}\quad\mathrm{and}\quad B_2=\{z_{\psi, 1}, z_{\psi, 2}, ..., z_{\psi, s}\}$$
for some $s$, where 
\begin{equation}\label{small-dist}
\mathrm{dist}(z_{\phi, i}, z_{\psi, i})<4\eta,\quad i=1, ..., s.
\end{equation}

Then, up to unitary equivalence, there are decompositions
$$\phi=\tilde\phi\oplus\bigoplus_{i=1}^s\pi_{z_{\phi, i}}\quad\mathrm{and}\quad \psi=\tilde\psi\oplus\bigoplus_{i=1}^s\pi_{z_{\psi, i}},$$
where $\mathrm{SP}\tilde{\phi}\cap\mathrm{B}(1-\gamma)=\mathrm{SP}\tilde{\psi}\cap\mathrm{B}(1-\gamma)=\O$, and the homomorphisms $$\phi'=\phi=\tilde\phi\oplus\bigoplus_{i=1}^s\pi_{z_{\phi, i}}\quad\mathrm{and}\quad \psi'=\tilde\psi\oplus\bigoplus_{i=1}^s\pi_{z_{\phi, i}}$$
have the required properties except the requirement on multiplicity.
 
Indeed, by \eqref{small-dist} and \eqref{choose-eta}, one has
$$\norm{\phi(f)-\phi'(f)}=0<\eps\quad\mathrm{and}\quad\norm{\psi(f)-\psi'(f)}<\eps,\quad f\in\mathcal F.$$
By \eqref{pairing-A} and \eqref{pairing-B}, one has that
$$\mathrm{SP}(\phi')\cap\mathrm{B}(1-\gamma)=\{z_{\phi, 1}, ..., z_{\phi, s}\}\cap\mathrm{B}(1-\gamma)=\mathrm{SP}(\psi')\cap\mathrm{B}(1-\gamma).$$

To satisfy the multiplicity condition, one needs to perturb $\phi'$ and $\psi'$ further.
Since $$\phi(s_{U, i})> \Delta(\hat{s}_{U, i})>0,\quad U\in\mathcal U, i=1, ..., M,$$ one has that for any $U\in\mathcal U$, at least $M$ of $\{z_{\phi_1}, ..., z_{\phi_s}\}$ are inside $U\setminus\bigcup_{V\neq U} V$ (counting multiplicity). Then there is a grouping of  grouping $\{z_{\phi_1}, ..., z_{\phi_s}\}$ such that each group is insider at most one open set $U$ and has elements at least $M$ if it is covered by an open set $U$. Therefore, after a perturbation, one may assume that each point in $\{z_{\phi_1}, ..., z_{\phi_s}\}\cap B(1-\gamma)$ has multiplicity at least $M$, and hence $\phi'$ and $\psi'$ satisfy the desired multiplicity condition.
\end{proof}

\begin{thm}\label{decp-thm}
Let $A=B\oplus_{\MC{k}{S^{n-1}}} \MC{k}{D^n}$ be an NCCC, and let $\Delta: A^+_{1, q}\setminus\{0\}\to (0, 1)$ be an order preserving map. Let $\mathcal F\subseteq A$ be a finite set. Let $\eps>0$, $\eta>0$ and $K\in \mathbb{N}\setminus\{0\}$. There are finite sets $\mathcal H_1, \mathcal H_2 \subseteq A^+$ and $\delta>0$ such that for any unital homomorphisms $\phi, \psi: A\to \Mat{m}$ such that
\begin{enumerate}
\item[(1)] $\tau(\phi(h)), \tau(\phi(h))>\Delta(\hat{h}),\quad h\in \mathcal H_1$, and
\item[(2)] $  \abs{\tau(\phi(h))-\tau(\psi(h))}<\delta,\quad h\in\mathcal H_2$,
\end{enumerate}
there exist unital homomorphisms $\tilde{\phi}, \tilde{\psi}: A\to \Mat{m}$ such that
\begin{enumerate}
\item[(3)] $\norm{\tilde{\phi}(f)-\phi(f)}<\eps$, $\norm{\tilde{\psi}(f)-\psi(f)}<\eps$, $f\in\mathcal F,$
\item[(4)]  $\tilde{\phi}$ and $\tilde{\psi}$ have decompositions
$$\tilde{\phi}=\tilde{\phi}_0\oplus\underbrace{\tilde{\phi}_1\oplus\cdots\oplus \tilde{\phi}_1}_{K} \quad\textrm{and}\quad \tilde{\psi}=\tilde{\psi}_0\oplus\underbrace{\tilde{\psi}_1\oplus\cdots\oplus\tilde{\psi}_1}_K$$
\end{enumerate}
such that $\tilde{\phi}_1$ and $\tilde{\psi}_1$ are unitarily equivalent, and
$$\mathrm{tr}(\tilde{\phi}_0(a))\leq \eta\cdot \mathrm{tr}(\tilde{\phi}(a)) \quad \textrm{and} \quad  \mathrm{tr}(\tilde{\psi}_0(a))\leq \eta\cdot\mathrm{tr}(\tilde{\psi}(a)),\quad a\in\mathcal F.$$
\end{thm}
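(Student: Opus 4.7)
My approach is induction on the length of the NC cell complex decomposition of $A$. In the inductive step I use Lemma \ref{pert-disk} to align the $D^n$-direction, and the inductive hypothesis on $B$ to align the $B$-direction.

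\emph{Base case.} If $A = \bigoplus_i \Mat{s_i}$ is finite-dimensional, every unital $\phi, \psi: A \to \Mat{m}$ decomposes (up to unitary equivalence) as $\bigoplus_i \pi_i^{\oplus a_i}$, $\bigoplus_i \pi_i^{\oplus b_i}$ for the minimal central representations $\pi_i$. Take $\mathcal{H}_1 = \mathcal{H}_2$ to consist of the minimal central projections; condition (1) yields $a_i, b_i \geq m\Delta(\hat e_i)$, and (2) yields $|a_i-b_i|\leq m\delta$. Setting $\tilde\phi_1 = \tilde\psi_1 = \bigoplus_i \pi_i^{\oplus \lfloor\min(a_i,b_i)/K\rfloor}$, the leftover multiplicities are each at most $(K-1)+|a_i-b_i|$, which is $\leq \eta a_i$ provided $\delta < \eta\Delta(\hat e_i)/2$ and $m\Delta(\hat e_i) \geq 2(K-1)/\eta$. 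In this case one may take $\tilde\phi=\phi$, $\tilde\psi=\psi$.

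\emph{Inductive step.} Write $A = B \oplus_{\MC{k}{S^{n-1}}} \MC{k}{D^n}$ with $B$ of shorter length. Choose $(\mathcal H_1^B, \mathcal H_2^B, \delta^B)$ from the inductive hypothesis applied to $B$ with inputs $(\mathcal F_B = \{f_B: f\in\mathcal F\}, \eps/3, \eta/3, K)$ and a suitable $\Delta^B$, and $(\mathcal H_1^D, \mathcal H_2^D, \delta^D)$ from Lemma \ref{pert-disk} applied to $(\mathcal F, \eps/3)$ with radial cut-off $\gamma$ small enough to exploit uniform continuity of $\mathcal F$ and multiplicity threshold $M_0 \geq 4K/\eta$. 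Take $\mathcal H_1 = \mathcal H_1^D \cup \iota(\mathcal H_1^B)$ and $\mathcal H_2 = \mathcal H_2^D \cup \iota(\mathcal H_2^B)$, where $\iota: B \to A$ is a positive Tietze-style section of $\pi_B$ whose $\MC{k}{D^n}$-component is supported in $B(1) \setminus B(1-2\gamma)$, and let $\delta$ dominate both $\delta^D$ and $\delta^B$ after rescaling. Given $\phi, \psi: A \to \Mat{m}$ satisfying (1), (2), apply Lemma \ref{pert-disk} to obtain $\phi', \psi'$ within $\eps/3$ of $\phi, \psi$ on $\mathcal F$ with $\mathrm{SP}(\phi')\cap B(1-\gamma) = \mathrm{SP}(\psi')\cap B(1-\gamma) = \{y_1,\ldots,y_s\}$ and multiplicities $m_j^\phi, m_j^\psi \geq M_0$. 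Pull off $\tilde\omega_D := \bigoplus_j \pi_{y_j}^{\oplus \lfloor \min(m_j^\phi,m_j^\psi)/K\rfloor}$, appearing $K$ times in each of $\phi', \psi'$ up to interior leftovers (which go into $\tilde\phi_0, \tilde\psi_0$). Replace each residual spectrum point $x$ in the annular region $B(1)\setminus B(1-\gamma)$ by $\pi_{x/\|x\|}\circ\varphi\circ\pi_B$ (a further $\eps/3$-perturbation by uniform continuity), after which the residual parts factor through $B$ as $\hat\phi^B\circ\pi_B$ and $\hat\psi^B\circ\pi_B$; absorb any dimension discrepancy $|m_B^\phi - m_B^\psi|$ into $\tilde\phi_0, \tilde\psi_0$ so that both $\hat\phi^B, \hat\psi^B$ land in a common $\Mat{m_B}$. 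Verify that (1), (2) on $A$ translate, after the boundary push and up to the factor $m_B/m$, into the hypotheses of the inductive hypothesis for $\hat\phi^B, \hat\psi^B$ on $B$. Apply it to obtain $\hat\phi^B = \hat\phi^B_0 \oplus (\hat\phi^B_1)^{\oplus K}$, $\hat\psi^B = \hat\psi^B_0 \oplus (\hat\psi^B_1)^{\oplus K}$ with $\hat\phi^B_1$ unitarily equivalent to $\hat\psi^B_1$, and set $\tilde\phi_1 = \tilde\omega_D \oplus (\hat\phi^B_1\circ\pi_B)$, $\tilde\psi_1 = \tilde\omega_D \oplus (\hat\psi^B_1\circ\pi_B)$, collecting complements in $\tilde\phi_0, \tilde\psi_0$.

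The principal obstacle is the forward choice of $\Delta^B$: one must select it before knowing the ratio $m_B/m$, yet the condition $\tau(\hat\phi^B(h)) > \Delta^B(\hat h)$ on $B$ must be derivable from $\tau(\phi(\iota(h))) > \Delta(\widehat{\iota(h)})$ on $A$, which involves this ratio. The resolution is to include in $\mathcal H_1$ an element $h_{\mathrm{bdry}} \in A^+$ concentrated on the $B$-part (e.g., a cut-off supported away from the centre of $D^n$), whose trace under $\phi$ forces a uniform lower bound $m_B/m \geq \Delta_0 > 0$ independent of $\phi$ and $m$; then $\Delta^B$ can be defined in terms of $\Delta$, $\Delta_0$, and the fixed section $\iota$. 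The same lower bound, combined with condition (2) on the cut-off functions already present in $\mathcal H_2^D$, also yields the bound $|m_j^\phi - m_j^\psi|/m_j^\phi \leq \eta/2$ needed to ensure that the interior leftover's trace is within the final $\eta\cdot\mathrm{tr}(\tilde\phi)$-slack.
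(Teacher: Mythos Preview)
Your approach is essentially the paper's: induction on the cell-complex length, Lemma \ref{pert-disk} to align the disk direction, push the annular spectrum to $S^{n-1}$ so the residual factors through $B$, then apply the inductive hypothesis on $B$, with the forward choice of $\Delta_B$ resolved exactly as you say via a boundary cut-off in $\mathcal H_1$ that bounds $m_B/m$ below. One simplification you are missing: Lemma \ref{pert-disk} already gives $\mathrm{SP}(\phi')\cap B(1-\gamma)=\mathrm{SP}(\psi')\cap B(1-\gamma)$ as \emph{multisets}, so the discrepancies $|m_j^\phi-m_j^\psi|$ and $|m_B^\phi-m_B^\psi|$ you accommodate are in fact zero, and both the interior-leftover bookkeeping and the dimension-matching step can be dropped.
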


\begin{proof}
The statement holds if $A$ is finite dimensional. Assume that the statement holds for $B$, and let us show that the statement holds for $A$.

Let $(\mathcal F, \eps)$ be given. For each $f\in\mathcal F$, write $f=(f_B, f_D)$ where $f_B\in B$ and $f_D\in \MC{k}{D^n}$.
Set $$\mathcal F_B=\{f_B: f\in\mathcal F\}\quad\textrm{and}\quad \mathcal F_D=\{f_D: f\in\mathcal F\}.$$

For each $r>0$, define
$$g_r: [0, 1]\ni x\mapsto \left\{
\begin{array}{ll}
1, & x\in[0, 1-r], \\
\textrm{linear}, & x\in[1-r, 1-r/2],\\
0, & x\in[1-r/2, 1];
\end{array}
\right.
$$
Also define $\tilde{g}_r=(0, g_r(\norm{x})1_{\MC{k}{D^n}})\in A$.

Pick $\gamma>0$ such that 
\begin{equation}\label{defn-gamma-decp}
\norm{f_D(x)-f_D(y)}<\eps/8,\quad f\in\mathcal F,\ \mathrm{dist}(x, y)<\gamma,
\end{equation}
and consider $g_\gamma$.

Consider the linear map
$$\chi_\gamma: \MC{k}{S^{n-1}} \ni f \mapsto (x\mapsto (1-g_\gamma(\norm{x})) \cdot f(\frac{x}{\norm{x}}))\in \MC{k}{D^n}$$
Clearly, $\chi_\gamma$ is a positive linear map, and then the map
$$\Gamma_\gamma: B \ni f \mapsto (f, \chi_\gamma(\varphi (f)))\in A$$
is positive and injective, where $\varphi: B\to\MC{k}{S^{n-1}}$ is the gluing map.

If $f\in B$ satisfies that $\tau(f)=0$, $\tau\in\tr(B)$, then $\tau(\chi(f))=0$, $\tau\in\tr(\MC{k}{D^n})$, and therefore $$\tau(\Gamma_\gamma(f))=0,\quad \tau\in \tr(A).$$
Hence the map
$$\Delta_B: B_q^1\ni  f\mapsto \Delta(\Gamma_\gamma(f))\in(0, 1)$$
is well defined and is order preserving.

Applying the inductive hypothesis to $B$ with $\Delta_B$, $\mathcal F_B$, $\eps/2$, $K$ and $\eta$, 
one obtains $\mathcal H_{B, 1}$, $\mathcal H_{B, 2}$, and $\delta_B$.

Define
$$\tilde{\mathcal H}_{B, 1}=\{\Gamma_\gamma(h): \ h\in\mathcal H_{B, 1}\}\quad
\textrm{and}\quad 
\tilde{\mathcal H}_{B, 2}=\{\Gamma_\gamma(h): \ h\in\mathcal H_{B, 2}\}.$$

Put $$\sigma_B=\min\{\Delta(\hat{h}): h\in\tilde{\mathcal H}_{B, 1}\}.$$

Let $\tilde\gamma $ be a positive number such that 
\begin{equation}\label{defn-Tgamma-decp}
\norm{\chi_\gamma(h)(x)-\chi_\gamma(h)(y)}<\min\{\frac{\Delta(\widehat{1-\tilde{g}_{\gamma/2}})\delta_B}{4}, \frac{\sigma_B}{4}, \frac{\eps}{8}\},\quad h\in\mathcal H_{B, 1}\cup\mathcal H_{B, 2}\cup\mathcal F_D
\end{equation}
for any $x, y\in D^n$ satisfying $\mathrm{dist}(x, y)<\tilde{\gamma}$.

Let $\mathcal H_{D, 1}\subseteq A$, $\mathcal H_{D, 2}\subseteq A$ and $\delta_D$ denote the finite sets and constant of Lemma \ref{pert-disk} with respect to $\mathcal F\cup\tilde{\mathcal H}_{B, 1}\cup\tilde{\mathcal H}_{B, 2}$ (in the place of $\mathcal F$), $\min\{\eps/8, \Delta(\widehat{1-\tilde{g}_{\gamma/2}})\delta_B/8, \sigma_B/8\}$ (in the place of $\eps$), $\tilde{\gamma}$, $M=\lfloor 2K/\eta\rfloor +1$, and $\Delta$.

Then $$\mathcal H_1=\tilde{\mathcal H}_{B, 1}\cup\mathcal H_{D, 1}\cup\{1-{\tilde{g}_{\gamma/2}}, \tilde{g}_{\gamma}\},\quad \mathcal H_2= \tilde{\mathcal H}_{B, 2}\cup\mathcal H_{D, 2},$$
and 
$$\delta=\min\{\Delta(\widehat{1-\tilde{g}_{\gamma/2}})\delta_B/4, \sigma_B/4, \delta_D\}$$
satisfy the statement.

In fact, let $\phi, \psi: A\to \Mat{m}$ be unital homomorphisms satisfying
\begin{equation}\label{density-cond-decp}
\tau(\phi(h)), \tau(\phi(h))>\Delta(\hat{h}),\quad h\in \mathcal H_1
\end{equation}
and
\begin{equation}\label{tr-cond-decp}
 \abs{\tau(\phi(h))-\tau(\psi(h))}<\delta,\quad h\in\mathcal H_2.
\end{equation}

Since $\delta\leq\delta_D$, by Lemma \ref{pert-disk}, there are homomorphisms
$$\phi', \psi': A\to \Mat{m}$$
such that 
\begin{equation}\label{first-pert-phi}
\norm{\phi'(f)-\phi(f)}<\min\{\eps/8, \Delta(\widehat{1-g_{\gamma/2}})\delta_B/8, \sigma_B/8\}, \quad f\in\mathcal F\cup\tilde{\mathcal H}_{B, 1}\cup\tilde{\mathcal H}_{B, 2},
\end{equation}
\begin{equation}\label{first-pert-psi}
\norm{\psi'(f)-\psi(f)}<\min\{\eps/8, \Delta(\widehat{1-g_{\gamma/2}})\delta_B/8, \sigma_B/8\},\quad f\in\mathcal F\cup\tilde{\mathcal H}_{B, 1}\cup\tilde{\mathcal H}_{B, 2} ,
\end{equation}
and
$$\mathrm{Sp}(\phi')\cap \mathrm{B}(1-\tilde{\gamma})=\mathrm{Sp}(\psi')\cap \mathrm{B}(1-\tilde{\gamma})=\{x_1, x_2, ..., x_l\},$$
for some $x_1, x_2, ..., x_l\in \mathrm{B}(1-\tilde{\gamma})$ with multiplicity at least $M$. Therefore, up to unitary equivalence, there are decompositions
\begin{equation}\label{first-decp-phi}
\phi'=\phi'_B\oplus(\bigoplus_{i=1}^{m_\phi}\pi_{x_{\phi, i}})\oplus(\bigoplus_{i=1}^l\pi_{x_i}),
\end{equation}
\begin{equation}\label{first-decp-psi}
\psi'=\psi'_B\oplus(\bigoplus_{i=1}^{m_\psi}\pi_{x_{\psi, i}})\oplus(\bigoplus_{i=1}^l\pi_{x_i}).
\end{equation}
for some $m_\phi, m_\psi\in \mathbb N$ and some $x_{\phi, 1}, ..., x_{\phi, m_\phi}, x_{\psi, 1}, ..., x_{\psi, m_\psi}\in D^n$ with $1>\norm{x_{\phi, i}}\geq \tilde{\gamma}$ and $1>\norm{x_{\psi, j}}\geq \tilde{\gamma}$.

By \eqref{density-cond-decp}, \eqref{first-pert-phi} and \eqref{first-pert-psi}, one has 
\begin{equation}\label{first-pert-lbd}
\mathrm{tr}(\phi'(h)), \mathrm{tr}(\psi'(h)) >\frac{7}{8}\Delta(\hat{h}), \quad h\in\mathcal H_{B, 1}.
\end{equation}

It also follows from \eqref{tr-cond-decp} \eqref{first-pert-phi} and \eqref{first-pert-psi} that for any $h\in \tilde{\mathcal H}_{B, 1}\cup\tilde{\mathcal H}_{B, 2}$, 
\begin{eqnarray}\label{first-pert-dist}
\abs{\mathrm{tr}(\phi'(h))-\mathrm{tr}(\psi'(h))} & < & \abs{\mathrm{tr}(\phi'(h))-\mathrm{tr}(\psi'(h))} + \Delta(\widehat{1-\tilde{g}_{\gamma/2}})\delta_B/4\\
& < & \delta + \Delta(\widehat{1-\tilde{g}_{\gamma/2}})\delta_B/4 \leq \Delta(\widehat{1-\tilde{g}_{\gamma/2}})\delta_B/2.\nonumber
\end{eqnarray}
Therefore, by the decompositions \eqref{first-decp-phi} and \eqref{first-decp-psi}, one has
\begin{equation}\label{first-pert-dist-tr}
\abs{\mathrm{tr}(\phi'_B(h)\oplus(\bigoplus_{i=1}^{m_\phi}h(x_{\phi, i}))) - \mathrm{tr}(\psi'_B(h)\oplus(\bigoplus_{i=1}^{m_\psi}h(x_{\psi, i}))) } < \Delta(\widehat{1-\tilde{g}_{\gamma/2}})\delta_B/2
\end{equation}

For each point $x_{\phi, i}$ (or $x_{\psi, j}$), replace the homomorphism $\pi_{x_{\phi, i}}$ (or $\pi_{x_{\psi, i}}$) by the homomorphism $\pi_{x'_{\phi, i}}$ (or $\pi_{x'_{\psi, i}}$), where $x'_{\phi, i}=x_{\phi, i}/\norm{x_{\phi, i}}\in S^{n-1}$ (or $x'_{\psi, i}=x_{\psi, i}/\norm{x_{\psi, i}}\in S^{n-1}$). By the choice of $\tilde{\gamma}$ (see \eqref{defn-Tgamma-decp}), one has
\begin{equation}\label{sec-pert-pts-phi}
\norm{\bigoplus_{i=1}^{m_\phi}\pi_{x'_{\phi, i}}(h)-\bigoplus_{i=1}^{m_\phi}\pi_{x_{\phi, i}}(h)} < \min\{\frac{\Delta(\widehat{1-\tilde{g}_{\gamma/2}})\delta_B}{4}, \frac{\sigma_B}{4}, \frac{\eps}{8}\},\quad h\in \tilde{\mathcal H}_{B_1, 1}\cup\tilde{\mathcal H}_{B_1, 2}\cup\mathcal F_D,
\end{equation}
and 
\begin{equation}\label{sec-pert-pts-psi}
\norm{\bigoplus_{i=1}^{m_\psi}\pi_{x'_{\psi, i}}(h)-\bigoplus_{i=1}^{m_\psi}\pi_{x_{\psi, i}}(h)} < \min\{\frac{\Delta(\widehat{1-\tilde{g}_{\gamma/2}})\delta_B}{4}, \frac{\sigma_B}{4}, \frac{\eps}{8}\},\quad h\in \tilde{\mathcal H}_{B_1, 1}\cup\tilde{\mathcal H}_{B_1, 2}\cup\mathcal F_D.
\end{equation}

Put
$$\phi_B''=\phi'_B\oplus(\bigoplus_{i=1}^{m_\phi}\pi_{x'_{\phi, i}})\quad\textrm{and}\quad\psi_B''=\psi'_B\oplus(\bigoplus_{i=1}^{m_\psi}\pi_{x'_{\psi, i}}).$$
Since $x'_{\phi, i}, x'_{\psi, j}\in S^{n-1}$, the homomorphisms $\phi_B''$ and $\psi_B''$ factor through $B$.
Define
\begin{equation}\label{sec-decp}
\phi''=\phi''_B\oplus(\bigoplus_{i=1}^l\pi_{x_i})\quad\textrm{and}\quad \psi''=\psi''_B\oplus(\bigoplus_{i=1}^l\pi_{x_i}).
\end{equation}

By \eqref{sec-pert-pts-phi}, \eqref{sec-pert-pts-psi}, one has
\begin{equation}\label{sec-pert-F}
\norm{\phi'(f)-\phi''(f)}<\eps/8\quad\textrm{and}\quad \norm{\psi''(f)-\psi'''(f)}<\eps/8,\quad f\in\mathcal F.
\end{equation}

By \eqref{sec-pert-pts-phi}, \eqref{sec-pert-pts-psi}, and \eqref{first-pert-lbd}, one has 
\begin{equation}\label{second-pert-lbd}
\mathrm{tr}(\phi''(h)), \mathrm{tr}(\psi''(h)) >\frac{3}{4}\Delta(\hat{h}), \quad h\in\mathcal H_{B, 1}.
\end{equation}

By \eqref{sec-pert-pts-phi}, \eqref{sec-pert-pts-psi} and \eqref{first-pert-dist-tr}, one has
\begin{equation}\label{second-pert-tr}
\abs{\mathrm{tr}(\phi''(h)) - \mathrm{tr}(\psi''(h))} < \frac{3}{4}\Delta(\widehat{1-\tilde{g}_{\gamma/2}})\delta_B,\quad h\in \tilde{\mathcal H}_{B, 2}.
\end{equation}

For each point $x_i$ with $\norm{x_i}>1-\gamma$, replace the homomorphism $\pi_{x_i}$ by $\pi_{x_i'}$, where $x_i'=x_i/\norm{x_i}\in S^{n-1}$. Clearly, the homomorphism $\pi_{x_i'}$ factors through $B$. Also note that for each such $x_i$, one has
\begin{equation}\label{third-pert-inc-tr}
\mathrm{tr}(\pi_{x'_i}(h))>\mathrm{tr}(\pi_{x_i}(h)),\quad h\in\tilde{\mathcal H}_{B, 1}.
\end{equation}
By the choice of $\gamma$ (see \eqref{defn-gamma-decp}), one also has
\begin{equation}\label{small-F-3}
\norm{\pi_{x_i'}(f_D) - \pi_{x_i}(f_D)}<\eps/8,\quad f\in\mathcal F.
\end{equation}

Set
$$\phi'''_B=\phi_B''\oplus(\bigoplus_{\norm{x_i}\geq 1-\gamma}\pi_{x'_i})\quad\textrm{and}\quad \psi'''_B=\psi_B''\oplus(\bigoplus_{\norm{x'_i}\geq 1-\gamma}\pi_{x'_i}),$$
and consider
\begin{equation}\label{third-decp}
\phi'''=\phi'''_B\oplus(\bigoplus_{\norm{x_i}<1-\gamma}\pi_{x_i})\quad\textrm{and}\quad \psi''=\psi'''_B\oplus(\bigoplus_{\norm{x_i}<1-\gamma}^l\pi_{x_i}).
\end{equation}

By \eqref{small-F-3}, one has
\begin{equation}\label{third-pert-F}
\norm{\phi''(f)-\phi'''(f)}<\eps/8\quad\textrm{and}\quad \norm{\psi''(f)-\psi'''(f)}<\eps/8,\quad f\in\mathcal F.
\end{equation}

By \eqref{third-pert-inc-tr} and \eqref{second-pert-lbd}, one has
\begin{equation}\label{third-pert-lbd}
\mathrm{tr}(\phi'''(h)), \mathrm{tr}(\psi'''(h)) >\frac{3}{4}\Delta(\hat{h}), \quad h\in\mathcal H_{B, 1}.
\end{equation}
By \eqref{second-pert-tr}, one has
\begin{equation}\label{third-pert-tr}
\abs{\mathrm{tr}(\phi'''(h)) - \mathrm{tr}(\psi'''(h))} < \frac{3}{4}\Delta(\widehat{1-\tilde{g}_{\gamma/2}})\delta_B,\quad h\in \tilde{\mathcal H}_{B, 2}.
\end{equation}

Denote by
$$N=\mathrm{rank}(\phi_B'''(1))=\mathrm{rank}(\psi_B'''(1)).$$
Then, by \eqref{third-pert-lbd},
$$\frac{N}{m}\geq \mathrm{tr}(\phi'''(1-\tilde{g}_{\gamma/2}))>\frac{3}{4}\Delta(\widehat{1-\tilde{g}_{\gamma/2}}),$$
and
$$\frac{m-N}{m} \geq \mathrm{tr}(\phi'''(g_\gamma))>\Delta(\widehat{(\tilde{g}_\gamma)}).$$
Therefore
\begin{equation}\label{rank-bd}
\frac{3}{4}\Delta(\widehat{1-\tilde{g}_{\gamma/2}})  < \frac{N}{m} < 1- \Delta(\widehat{(\tilde{g}_\gamma)}). 
\end{equation}

Then, by considering the unital homomorphisms $\phi_B''', \psi_B''': B \to\Mat{N}$, it follows from \eqref{third-pert-tr} and \eqref{rank-bd} that 
\begin{equation}
\frac{1}{N}(\mathrm{Tr}(\phi_B'''(h))-\mathrm{Tr}(\phi_B'''(h))) =\frac{m}{N}(\mathrm{tr}(\phi'''(\Gamma_\gamma(h)))-\mathrm{tr}(\phi'''(\Gamma_\gamma(h)))) 
< \delta_B,\quad h\in\mathcal H_{B, 2}.
\end{equation}
It also follows from \eqref{third-pert-lbd} that 
$$\frac{1}{N}\mathrm{Tr}(\phi_B'''(h))\geq\mathrm{tr}(\phi'''_B(h)) = \mathrm{tr}(\phi'''(\Gamma_\gamma(h)))  > \Delta(\hat h),\quad h\in\mathcal H_{B, 1},$$
and
$$\frac{1}{N}\mathrm{Tr}(\psi_B'''(h))\geq\mathrm{tr}(\psi'''_B(h)) = \mathrm{tr}(\psi'''(\Gamma_\gamma(h)))  > \Delta(\hat h),\quad h\in\mathcal H_{B, 1}.$$

Then, it follows from the inductive hypothesis that there are homomorphisms 
$\tilde{\phi}_B, \tilde{\psi}_B: B\to\Mat{N}$ such that
\begin{equation}\label{pert-B}
\norm{\tilde{\phi}_B(f_B)-\phi'''_B(f_B)} <\eps/2\quad \mathrm{and}\quad \norm{\tilde{\psi}_B(f_B)-\psi'''_B(f_B)} <\eps/2,\quad f\in\mathcal F,
\end{equation}
and there are decompositions
$$\tilde{\phi}_B=\tilde{\phi}_{B, 0}\oplus\underbrace{\tilde{\phi}_{B, 1}\oplus\cdots\oplus \tilde{\phi}_{B, 1}}_K\quad\textrm{and}\quad \tilde{\psi}_B=\tilde{\psi}_{B, 0}\oplus\underbrace{\tilde{\psi}_{B, 1}\oplus\cdots\oplus \tilde{\psi}_{B, 1}}_K$$
with $\tilde{\phi}_{B, 1}$ and $\tilde{\psi}_{B, 1}$ unitarily equivalent and
\begin{equation}\label{pre-ubd-tr-0}
\frac{1}{N}\mathrm{rank}(\tilde{\phi}_{B, 0}(a)) \leq \eta \cdot \frac{1}{N}\mathrm{rank}(\tilde{\phi}_{B}(a)), \quad a\in\mathcal F_B,
\end{equation}
\begin{equation}\label{pre-ubd-tr-1}
\frac{1}{N}\mathrm{rank}(\tilde{\psi}_{B, 0}(a)) \leq \eta \cdot  \frac{1}{N}\mathrm{rank}(\tilde{\psi}_{B}(a)), \quad a\in\mathcal F_B.
\end{equation}


By \eqref{rank-bd} and \eqref{pre-ubd-tr-0}, one has that for any $a\in\mathcal F_B$,
one has
\begin{equation}\label{ubd-tr-0}
\mathrm{tr}(\tilde{\phi}_{B, 0}(a))= \frac{1}{m}\mathrm{rank}(\tilde{\phi}_{B, 0}(a))=\frac{N}{m}\frac{1}{N}\mathrm{rank}(\tilde{\phi}_{B, 0}(a)) \leq\eta\cdot \frac{1}{m}\mathrm{rank}(\tilde{\phi}_{B}(a))=\eta\cdot \mathrm{tr}(\tilde{\phi}_{B}(a)),
\end{equation}
and for the same reason,
\begin{equation}\label{ubd-tr-1}
\mathrm{tr}(\tilde{\psi}_{B, 0}(a)) \leq \eta\cdot \mathrm{tr}(\tilde{\psi}_{B}(a)), \quad a\in\mathcal F_B.
\end{equation}


Consider the map $\bigoplus_{\norm{x_i}<1-\gamma}\pi_{x_i}$, there is a decomposition
$$\bigoplus_{\norm{x_i}<1-\gamma}\pi_{x_i}=\bigoplus_{\norm{x_i}<1-\gamma}\pi_{x'_i} \oplus \underbrace{(\bigoplus_{\norm{x_i}<1-\gamma}\pi _{x_i''})\oplus\cdots\oplus (\bigoplus_{\norm{x_i}<1-\gamma}\pi _{x_i''})}_K,$$
where each $x_i'$ and $x_i''$ are same as $x_i$ but with different multiplies, and $x_i'$ has multiplicity at most $K$.
Since each $x_i$ has multiplicity at least $M=\lfloor 2K/\eta\rfloor +1$, one has that
\begin{equation}\label{ubd-tr-1}
\mathrm{tr}(\bigoplus_{\norm{x_i}<1-\gamma}\pi_{x'_i}(a)) \leq \eta\cdot \mathrm{tr}( \bigoplus_{\norm{x_i}<1-\gamma}\pi_{x_i}(a)),\quad a\in A.
\end{equation}

Then set
$$\tilde{\phi}_0=\tilde{\phi}_{B, 0}\oplus\bigoplus_{\norm{x_i}<1-\gamma}\pi_{x'_i}  \quad\textrm{and}\quad \tilde{\psi}_0=\tilde{\psi}_{B, 0}\oplus \bigoplus_{\norm{x_i}<1-\gamma}\pi_{x'_i},$$
and
$$\tilde{\phi}_1=\tilde{\phi}_{B, 1}\oplus(\bigoplus_{\norm{x_i}<1-\gamma}\pi_{x''_i}) \quad\textrm{and}\quad \tilde{\psi}_1=\tilde{\psi}_{B, 1}\oplus(\bigoplus_{\norm{x_i}<1-\gamma}\pi_{x''_i}),$$
where $\tilde{\phi}_{B, 0}, \tilde{\phi}_{B, 1}, \tilde{\psi}_{B, 0}, \tilde{\psi}_{B, 1}$ are regarded as maps on $A$.

The homomorphisms
$$\tilde{\phi}=\tilde{\phi}_0\oplus \underbrace{\tilde{\phi}_{B, 1}\oplus\cdots\oplus \tilde{\phi}_{B, 1}}_K\quad\textrm{and}\quad \tilde{\psi}=\tilde{\psi}_0\oplus \underbrace{\tilde{\psi}_{B, 1}\oplus\cdots\oplus \tilde{\psi}_{B, 1}}$$
and these decompositions have the properties required in the statement of the theorem.

Indeed, by \eqref{first-pert-phi}, \eqref{first-pert-psi}, \eqref{sec-pert-F}, \eqref{third-pert-F}, and \eqref{pert-B}, one has
$$\norm{\phi(f)-\tilde{\phi}(f)}<\norm{\phi(f)- \phi'''(f)}+\norm{\phi'''(f)-\tilde{\phi}(f)}<\eps/2+\eps/2=\eps,\quad f\in\mathcal F,$$
and
$$\norm{\psi(f)-\tilde{\psi}(f)}<\norm{\psi(f)- \psi'''(f)}+\norm{\psi'''(f)-\tilde{\psi}(f)}<\eps/2+\eps/2=\eps,\quad f\in\mathcal F.$$
It is also clear that $\tilde{\phi}_1$ and $\tilde{\psi}_1$ are unitarily equivalent, and it follows from \eqref{ubd-tr-0},  \eqref{ubd-tr-1} and \eqref{ubd-tr-1} that the maps $\tilde{\phi}_0$ and $\tilde{\psi}_0$ satisfy the desired trace condition.
\end{proof}

Recall (from \cite{GLN-TAS})
\begin{lem}[Lemma 4.13 of \cite{GLN-TAS}]\label{stable-uniq}
Let $A$ be a unital separable nuclear residually finite dimensional C*-algebra satisfying the UCT, and let $\Delta: A^+_{{1}, q}\setminus\{0\} \to (0,1)$ be an order preserving map.  For any finite set ${\mathcal F}\subseteq A$ and any $\eps>0$, there exist
$\delta>0,$ a finite set ${\mathcal G}\subseteq A,$ a finite set ${\mathcal P}\subseteq \underline{K}(A),$ a finite set ${\mathcal H}\subseteq A_+^{1}\setminus \{0\}$, and an integer $K\geq 1$ satisfying the following condition:
For any two unital ${\mathcal G}$-$\delta$-multiplicative linear maps $\phi_1, \phi_2: A\to \Mat{n}$ (for some integer $n$) and any
unital homomorphism $\psi: A\to \Mat{m}$ with $m\geq n$ such that
\begin{enumerate}
\item $\tau\circ \psi(g)\geq \Delta(\hat{g})$, $g\in \mathcal H$,
\item $[\phi_1]|_{\mathcal P}=[\phi_2]|_{\mathcal P}$,
\end{enumerate}
there exists a unitary $U\in \Mat{Km+n}$ such that
\begin{equation*}
\norm{\phi_1(f)\oplus \underbrace{\psi(f)\oplus\cdots\oplus\psi(f)}_K-u^*(\phi_2(f)\oplus \underbrace{\psi(f)\oplus\cdots\oplus\psi(f)}_K)u}<\eps,\quad  f\in \mathcal F.
\end{equation*}
\end{lem}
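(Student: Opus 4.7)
The plan is to deploy the stable uniqueness machinery for $\mathcal{G}$-$\delta$-multiplicative maps into matrix algebras (in the spirit of Dadarlat--Eilers and Lin), reducing the statement to a Weyl--von~Neumann--Voiculescu type absorption theorem in a sequence algebra via a standard contradiction argument. First I assume the conclusion fails: for every candidate tuple $(\delta,\mathcal{G},\mathcal{P},\mathcal{H},K)$ there exist counterexamples. Pick a sequence of such tuples with $\delta_n\to 0$, $\bigcup\mathcal{G}_n$ dense in $A$, $\bigcup \mathcal{P}_n$ exhausting $\underline{K}(A)$, $\bigcup\mathcal{H}_n$ dense in $A^+_1\setminus\{0\}$, and $K_n\to\infty$, with corresponding maps $\phi_{1,n}$, $\phi_{2,n}$, $\psi_n$. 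Assemble them into unital $*$-homomorphisms $\Phi_1,\Phi_2,\Psi:A\to B$, where $B:=\prod_n \Mat{k_n}/\bigoplus_n \Mat{k_n}$; the $\mathcal{G}$-$\delta$-multiplicativity becomes honest multiplicativity in the quotient (using separability and nuclearity of $A$).

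Next, the K-theoretic hypothesis propagates: agreement on the exhausting sets $\mathcal{P}_n$ implies $[\Phi_1]=[\Phi_2]$ in $KL(A,B)$, using the UCT to identify $KL(A,B)$ with graded group homomorphisms on $\underline{K}(A)$. The tracial hypothesis $\tau\circ\psi_n(h)\geq \Delta(\hat{h})$ for all $h\in\mathcal{H}_n$ translates, in the limit, to a strong tracial positivity property for $\Psi$: every $a\in A^+_1\setminus\{0\}$ satisfies $\tau_B(\Psi(a))\geq \Delta(\hat{a})$ for every tracial state of $B$ coming from a limit of normalized traces. The residual finite-dimensionality of $A$ enters here to guarantee that this quantitative tracial fullness is genuinely testable on a dense set of positive elements.

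Now I apply the stable absorption theorem in this sequence-algebra setting. Because $A$ is separable, nuclear, and satisfies the UCT, and $\Psi$ is sufficiently positive in trace (the decisive consequence of the $\Delta$-bound), the infinite amplification $\Psi^\infty$ absorbs any $*$-homomorphism whose $KL$-class is dominated appropriately by $[\Psi^\infty]$. In particular, $\Phi_j\oplus\Psi^\infty$ is approximately unitarily equivalent to $\Psi^\infty$ for $j=1,2$; combined with $[\Phi_1]=[\Phi_2]$ this yields approximate unitary equivalence of $\Phi_1\oplus \Psi^\infty$ and $\Phi_2\oplus \Psi^\infty$ in $B$. Truncating the implementing unitary in $B$ back to a finite stage $n$ and using only $K_n$ copies of $\psi_n$ produces, for sufficiently large $n$, a genuine unitary $U\in\Mat{K_n m_n+k_n}$ witnessing the inequality from the statement, contradicting the assumed failure.

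The main obstacle is precisely the absorption step: one needs a finite-dimensional, quantitative Voiculescu-type absorption result in which the traditional fullness hypothesis is replaced by the weaker quantitative tracial bound $\tau\circ\psi(h)\geq \Delta(\hat{h})$, and, crucially, in which the integer $K$ is uniform in $(\mathcal{F},\eps)$ rather than dependent on the particular maps $\phi_1,\phi_2,\psi$. This uniformity is extracted from the contradiction/compactness setup above, and the argument relies essentially on the combination of the UCT (to reduce $KK$-theoretic data to $\underline{K}(A)$), the nuclearity and residual finite-dimensionality of $A$ (to lift approximate morphisms and to have access to finite-dimensional representations), and the precise form of $\Delta$ (to quantify absorption in the finite-dimensional setting). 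These are exactly the ingredients assembled in the Dadarlat--Eilers/Lin stable uniqueness framework, so once the contradiction/sequence-algebra reduction is in place the remaining work is to invoke that machinery.
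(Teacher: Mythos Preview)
The paper does not actually prove this lemma; it is recalled verbatim from \cite{GLN-TAS} (Lemma~4.13 there) and used as a black box in the proof of Theorem~\ref{unique-F}, so there is no proof in the present paper against which to compare your proposal.

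That said, your outline is the correct architecture for this kind of stable uniqueness result and is essentially how the lemma is established in \cite{GLN-TAS} (following the Dadarlat--Eilers/Lin framework). A couple of minor comments on the sketch. First, the role you assign to residual finite-dimensionality is slightly misplaced: it is not really needed to ``test tracial fullness on a dense set'' (the $\Delta$-bound plus density of $\bigcup\mathcal H_n$ already gives that), but rather to guarantee that the sequence-algebra target $B=\prod\Mat{k_n}/\bigoplus\Mat{k_n}$ is rich enough and that the limit maps $\Phi_1,\Phi_2,\Psi$ are genuine unital $*$-homomorphisms into it. Second, in the absorption step one does not literally form $\Psi^\infty$; rather, one shows that $\Psi$ is a \emph{full} homomorphism into $B$ (this is exactly what the $\Delta$-bound provides), and then the stable uniqueness machinery (UCT + nuclearity + $[\Phi_1]=[\Phi_2]$ in $KL(A,B)$ + fullness of $\Psi$) yields a unitary in $\Mat{K+1}(B)$ conjugating $\Phi_1\oplus\Psi^{\oplus K}$ to $\Phi_2\oplus\Psi^{\oplus K}$ for some fixed $K$; lifting this unitary to $\prod\Mat{(Km_n+k_n)}$ and projecting to a late coordinate gives the contradiction. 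Your identification of this absorption step as the crux, and of the uniformity of $K$ as the point extracted by the contradiction argument, is exactly right.
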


%
%
%
%

\begin{thm}\label{unique-F}
Let $A$ be an NCCC. Let $\Delta: A^+_{1, q}\setminus\{0\} \to (0, 1)$ be an order preserving map. 
Let $(\mathcal F, \eps)$ be given. Then there are finite sets 
$\mathcal G\subseteq A$, 
$\mathcal H_1\subseteq A^+$, $\mathcal H_2\subseteq A^+$ and ${\mathcal P}\subseteq \underline{K}(A)$, and positive numbers $\delta, \sigma>0$ satisfying the following condition:

If $\phi, \psi: A\to \Mat{m}$ are unital $\mathcal G$-$\delta$-multiplicative maps such that 
\begin{enumerate}
\item $[\phi]|_{\mathcal P}=[\psi]|_{\mathcal P}$,
\item $\mathrm{tr}(\phi(h))>\Delta(\hat{h})$ and $\mathrm{tr}(\phi(h))>\Delta(\hat{h})$, $h\in\mathcal H_1$, and
\item $\abs{\mathrm{tr}(\phi)(h)-\mathrm{tr}(\psi)(h)}<\sigma$, $h\in\mathcal H_2$,
\end{enumerate}
then there is a unitary $u\in \Mat{m}$ such that
$$\norm{\phi(f)-u^*\psi(f)u}<\eps,\quad f\in\mathcal F.$$
\end{thm}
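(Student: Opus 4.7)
The plan is to combine the decomposition theorem (Theorem \ref{decp-thm}) with the stable uniqueness lemma (Lemma \ref{stable-uniq}): decompose $\phi$ and $\psi$, up to a small perturbation, as $\tilde\phi_0\oplus K\Psi$ and $\tilde\psi_0\oplus K\Psi$ (the latter after a unitary rearrangement) with the ``small'' summands $\tilde\phi_0,\tilde\psi_0$ contributing a negligible fraction of the trace; then apply stable uniqueness to $\tilde\phi_0,\tilde\psi_0$ using the $K$ copies of $\Psi$ as the absorbing homomorphism.

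Setup of the constants. First apply Lemma \ref{stable-uniq} to the data $A,\Delta,\mathcal F,\eps/3$ to obtain $\delta_0$, $\mathcal G_0\subseteq A$, $\mathcal P_0\subseteq\underline K(A)$, $\mathcal H_0\subseteq A^+_1\setminus\{0\}$, and an integer $K\geq 1$. Next, choose $\eta\in(0,1/(K+1))$ small enough that $(1-\eta)^{-1}\Delta$ still takes values in $(0,1)$. Then apply Theorem \ref{decp-thm} to $A$, with the rescaled map $(1-\eta)^{-1}\Delta$, the enlarged finite set $\mathcal F\cup\mathcal G_0\cup\mathcal H_0$, tolerance $\eps/3$, $\eta$, and the above $K$, to obtain $\mathcal H_1^{\mathrm{dec}},\mathcal H_2^{\mathrm{dec}}\subseteq A^+$ and $\delta^{\mathrm{dec}}>0$. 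Finally take $\mathcal G=\mathcal G_0\cup\mathcal F$, $\delta=\min(\delta_0,\delta^{\mathrm{dec}})$ small enough that any unital $\mathcal G$-$\delta$-multiplicative map into $\Mat m$ can be perturbed within $\eps/6$ to a genuine homomorphism (weak semiprojectivity of NCCCs), $\mathcal P=\mathcal P_0$, $\mathcal H_1=\mathcal H_0\cup\mathcal H_1^{\mathrm{dec}}$, $\mathcal H_2=\mathcal H_2^{\mathrm{dec}}$, and $\sigma=\delta^{\mathrm{dec}}$.

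Execution. Given $\phi,\psi$ satisfying (1)--(3), first perturb them to genuine unital homomorphisms (incurring error $\eps/6$ on $\mathcal F$). Then apply Theorem \ref{decp-thm}: one obtains $\tilde\phi,\tilde\psi$ within $\eps/3$ of the perturbed maps on $\mathcal F$, with decompositions $\tilde\phi=\tilde\phi_0\oplus K\tilde\phi_1$ and $\tilde\psi=\tilde\psi_0\oplus K\tilde\psi_1$, where $\tilde\phi_1\cong\tilde\psi_1$ (call this common map $\Psi:A\to\Mat{n_1}$) and $\mathrm{tr}_m(\tilde\phi_0(a))\leq\eta\,\mathrm{tr}_m(\tilde\phi(a))$ (and likewise for $\tilde\psi_0$). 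Let $n_0$ denote the rank of $\tilde\phi_0(1)$ (which equals that of $\tilde\psi_0(1)$, since $n_0+Kn_1=m$); the choice $\eta<1/(K+1)$ ensures $n_1\geq n_0$. A direct computation gives, for $h\in\mathcal H_0$, $\mathrm{tr}_{n_1}(\Psi(h))\geq(1-\eta)\cdot\tfrac{m}{Kn_1}\,\mathrm{tr}_m(\tilde\phi(h))\geq\Delta(\hat h)$, which is exactly the density hypothesis required by Lemma \ref{stable-uniq}. Moreover $[\tilde\phi_1]=[\tilde\psi_1]$, combined with $[\phi]|_{\mathcal P}=[\psi]|_{\mathcal P}$ (and the finite generation of $\underline K(A)$ from Lemma \ref{fg-K}), gives $[\tilde\phi_0]|_{\mathcal P}=[\tilde\psi_0]|_{\mathcal P}$. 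Therefore Lemma \ref{stable-uniq} applies, yielding a unitary $u\in\Mat{m}$ with $\|\tilde\phi_0(f)\oplus K\Psi(f)-u^*(\tilde\psi_0(f)\oplus K\Psi(f))u\|<\eps/3$ for $f\in\mathcal F$; absorbing the unitary conjugation that identifies $\tilde\psi_0\oplus K\Psi$ with $\tilde\psi$ and the two perturbations via the triangle inequality gives the desired $\eps$-bound.

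The main obstacle is the bookkeeping between the two normalizations of the trace. Specifically, Lemma \ref{stable-uniq} demands $\mathrm{tr}_{n_1}\circ\Psi\geq\Delta$ on the normalized trace of its \emph{range}, whereas hypothesis (2) only controls the normalized trace on the ambient $\Mat m$; the factor $m/(Kn_1)\leq(1-\eta)^{-1}$ must be absorbed by strengthening $\Delta$ to $(1-\eta)^{-1}\Delta$ before applying the decomposition theorem and by taking $\eta$ small relative to $K$. A secondary technical point is the passage from $\mathcal G$-$\delta$-multiplicative maps to honest homomorphisms (needed both to apply Theorem \ref{decp-thm} and to make sense of $[\tilde\phi_0]|_{\mathcal P}=[\tilde\psi_0]|_{\mathcal P}$ in ordinary K-theory), which is handled by the semiprojectivity properties inherent in the NCCC structure.
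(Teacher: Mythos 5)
Your overall strategy matches the paper's: combine the decomposition theorem (Theorem~\ref{decp-thm}) with the stable uniqueness lemma (Lemma~\ref{stable-uniq}), using the $K$ common copies from the decomposition as the absorbing homomorphism. However, there is a genuine gap in the step where you pass from $\mathcal G$-$\delta$-multiplicative maps to honest homomorphisms. You claim that $\delta$ can be chosen so that ``any unital $\mathcal G$-$\delta$-multiplicative map into $\Mat{m}$ can be perturbed within $\eps/6$ to a genuine homomorphism (weak semiprojectivity of NCCCs).'' This is false: NCCCs are not in general semiprojective, or even weakly so. Already for $A=\mathrm{C}(S^2)$ (a cell complex obtained by attaching one $2$-cell to a point), K-theoretic obstructions (a nontrivial Bott element) produce unital almost-multiplicative maps into matrix algebras which are uniformly far from every $*$-homomorphism, no matter how multiplicative they are on a fixed finite set. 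Since Theorem~\ref{decp-thm} is stated only for $*$-homomorphisms into $\Mat{m}$, this failure blocks your application of the decomposition theorem to the given $\phi,\psi$.

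The paper resolves this with an extra ingredient that your proposal omits: Corollary 5.5 of \cite{GLN-TAS}. That result does \emph{not} turn $\phi$ into a homomorphism; rather it produces a decomposition $\phi\approx\phi_0\oplus\phi_1$ where $\phi_1$ is an honest homomorphism carrying almost all of the trace and $\phi_0$ is a small (in trace) piece which remains merely $\mathcal G'$-$\delta'$-multiplicative. Theorem~\ref{decp-thm} is then applied only to the homomorphism pieces $\phi_1,\psi_1$, producing $\phi_1'\oplus K\mu$ and $\psi_1'\oplus K\mu$; finally Lemma~\ref{stable-uniq} is applied to the almost-multiplicative maps $\phi_0\oplus\phi_1'$ and $\psi_0\oplus\psi_1'$, with $\mu$ as the absorbing homomorphism. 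This is admissible precisely because Lemma~\ref{stable-uniq} accepts almost-multiplicative $\phi_1,\phi_2$; only the absorbing $\psi$ need be a homomorphism. To repair your argument you should replace the semiprojectivity step with this tracial splitting, and then route the small almost-multiplicative remainders into the ``$\phi_1,\phi_2$'' slots of Lemma~\ref{stable-uniq} as the paper does. Your handling of the trace normalization (rescaling $\Delta$ by $(1-\eta)^{-1}$ versus the paper's $\Delta/2$, $\Delta/4$) and the size comparison $n_1\geq n_0$ are fine once this is done.
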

\begin{proof}
Applying Lemma \ref{stable-uniq} to the data $A$, $\Delta/4$, and $(\mathcal F, \eps/3)$, one obtains $(\mathcal G', \delta', \mathcal P, \mathcal H, K)$ satisfying the conditions of Lemma \ref{stable-uniq}. 

Applying Theorem \ref{decp-thm} to the data $A$, $\Delta/2$, $\mathcal F\cup\mathcal H$, $\min\{\eps/6,\Delta(\mathcal H)/4\}$ (in place of $\eps$), $\eta=1/2K,$ and $K$, one obtains $\mathcal H_1$, $\mathcal H_2$, and $2\sigma$ (in place of $\delta$) satisfying the conclusion of Theorem \ref{decp-thm}. Without loss of generality, one may assume that $\mathcal H_1$ and $\mathcal H_2$ are in the unital ball of $A$.

Applying Corollary 5.5 of \cite{GLN-TAS} to $$\mathcal F\cup\mathcal H_1\cup\mathcal H_2,\ \min\{\eps/6, \Delta(\mathcal H_1)/2, \Delta(\mathcal H_2)/2, \sigma/4\},\  \min\{1/2K, \Delta(\mathcal H_1)/2, \Delta(\mathcal H_2)/2, \sigma/4(1+\sigma)\},$$ one obtains $(\mathcal G, \delta)$. 

Then $\mathcal G$, $\delta$, $\sigma$, $\mathcal H_1$, $\mathcal H_2$, and $\mathcal P$ satisfy the conclusion of the theorem. Indeed, let $\phi, \psi: A\to \Mat{m}$ be $\mathcal G$-$\delta$-multiplicative maps such that 
\begin{enumerate}
\item $[\phi]|_{\mathcal P}=[\psi]|_{\mathcal P}$,
\item $\mathrm{tr}(\phi(h))>\Delta(\hat{h})$ and $\mathrm{tr}(\phi(h))>\Delta(\hat{h})$, $h\in\mathcal H_1$, and
\item $\abs{\mathrm{tr}(\phi)(h)-\mathrm{tr}(\psi)(h)}<\sigma$, $h\in\mathcal H_2$.
\end{enumerate}

By Theorem 5.5 of \cite{GLN-TAS}, there are $\phi_0, \phi_1, \psi_0, \psi_1: A\to \Mat{m}$ such that $\phi_0, \psi_0$ are $\mathcal G'$-$\delta'$-multiplicative, $\phi_1$, $\psi_1$ are homomorphisms, such that
\begin{equation}\label{pert-am-phi-1}
\norm{\phi(a)-\phi_0(a)\oplus\phi_1(a)} < \min\{\eps/6, \Delta(\mathcal H_1)/2, \Delta(\mathcal H_2)/2, \sigma/4\}, \quad a\in \mathcal F\cup\mathcal H_1\cup\mathcal H_2,
\end{equation}
\begin{equation}\label{pert-am-psi-1}
\norm{\psi(a)-\psi_0(a)\oplus\psi_1(a)} < \min\{\eps/6, \Delta(\mathcal H_1)/2, \Delta(\mathcal H_2)/2, \sigma/4\},  \quad   a\in \mathcal F\cup\mathcal H_1\cup\mathcal H_2,
\end{equation}
and
$$\mathrm{tr}(\phi_0(1))=\mathrm{tr}(\psi_0(1)) <  \min\{1/2K, \Delta(\mathcal H_1)/2, \Delta(\mathcal H_2)/2, \sigma/4(1+\sigma)\}.$$

Consider the unital homomorphisms $\phi_1, \psi_1: A\to p\Mat{m}p$, where $p=\phi_1(1)=\psi_1(1)$. One has that 
\begin{enumerate}
\item $\tau(\phi_1(h)), \tau(\phi_1(h))>\Delta(\hat{h})/2,\quad h\in \mathcal H_1$, and
\item $\abs{\tau(\phi_1(h))-\tau(\psi_1(h))}<2\sigma,\quad h\in\mathcal H_2$,
\end{enumerate}
where $\tau\in\mathrm{T}(p\Mat{m}p)$.

By Theorem \ref{decp-thm}, up to unitary equivalence, there are homomorphisms $\phi_1', \psi_1', \mu: A\to p\Mat{m}p$ such that
\begin{equation}\label{pert-am-phi-2}
\norm{\phi_1(a) - \phi_1'(a)\oplus \underbrace{\mu(a)\oplus\cdots\oplus\mu(a)}_K}<\min\{\eps/6, \Delta(\mathcal H)/4\},\quad a\in\mathcal F\cup\mathcal H,
\end{equation}
\begin{equation}\label{pert-am-psi-2}
\norm{\psi_1(a) - \psi_1'(a)\oplus \underbrace{\mu(a)\oplus\cdots\oplus\mu(a)}_K}<\min\{\eps/6, \Delta(\mathcal H)/4\},\quad a\in\mathcal F\cup\mathcal H,
\end{equation}
and
$$\tau(\phi_1'(a)) \leq  \frac{1}{2K}  \cdot \tau(\phi_1(a)) \quad\mathrm{and}\quad \tau(\psi_1'(a)) \leq  \frac{1}{2K} \cdot \tau(\psi_1(a)),\quad a\in\mathcal F\cup\mathcal H, \tau\in\mathrm{T}(p\Mat{m}p).$$
Therefore, one has that 
$$\tau(\mu(h)) > \frac{1}{4}\Delta(\hat{h}),\quad h\in\mathcal H,\ \tau\in\mathrm{T}(q\Mat{m}q),$$
where $q=\mu(1)$.
Consider the map 
$$(\phi_0\oplus \phi_1')\oplus (\underbrace{\mu \oplus\cdots\oplus\mu }_K)\quad
\mathrm{and}
\quad (\psi_0\oplus \psi_1')\oplus (\underbrace{\mu \oplus\cdots\oplus\mu }_K),$$
and note that
$$\mathrm{tr}(\phi_0(1)\oplus\phi'_1(1))=\mathrm{tr}(\psi_0(1)\oplus\psi_1'(1))<\mathrm{tr}(\mu(1)).$$
It then follows from Lemma \ref{stable-uniq} that there is a unitary $u\in \Mat{m}$ such that for any $a\in\mathcal F$,
$$\norm{(\phi_0(a)\oplus \phi_1'(a))\oplus (\underbrace{\mu(a) \oplus\cdots\oplus\mu(a) }_K) - u^*((\psi_0(a)\oplus \psi_1'(a))\oplus (\underbrace{\mu(a) \oplus\cdots\oplus\mu(a) }_K))u} < \frac{\eps}{3}.$$
It then follows from \eqref{pert-am-phi-1}, \eqref{pert-am-psi-1}, \eqref{pert-am-phi-2}, and \eqref{pert-am-psi-2} that 
$$\norm{\phi(a) - u^* \psi(a)u} <\eps,\quad a\in\mathcal F,$$
as desired.
\end{proof}

Note that $\mathrm{KL}(A, Q)\cong\mathrm{Hom}(\Kzero(A), \Kzero(Q))$, a straightforward consequence is
\begin{cor}\label{unique-Q}
Let $A$ be an NCCC. Let $\Delta: A^+_{1, q}\setminus\{0\} \to (0, 1)$ be an order preserving map. 
Let $(\mathcal F, \eps)$ be given. Then there are finite sets 
$\mathcal H_1\subseteq A^+$ and $\mathcal H_2\subseteq A^+$ and a positive number $\delta>0$ satisfying the following condition:

If $\phi, \psi: A\to Q$ are unital homomorphisms such that 
\begin{enumerate}
\item $[\phi]_0=[\psi]_0$,
\item $\mathrm{tr}(\phi(h))>\Delta(\hat{h})$ and $\mathrm{tr}(\phi(h))>\Delta(\hat{h})$, $h\in\mathcal H_1$, and
\item $\abs{\mathrm{tr}(\phi)(h)-\mathrm{tr}(\psi)(h)}<\delta$, $h\in\mathcal H_2$,
\end{enumerate}
then there is a unitary $u\in Q$ such that
$$\norm{\phi(f)-u^*\psi(f)u}<\eps,\quad f\in\mathcal F.$$
\end{cor}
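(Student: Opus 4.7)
The plan is to reduce this to the finite-matrix uniqueness statement, Theorem \ref{unique-F}, by approximating the homomorphisms $\phi, \psi : A \to Q$ by almost-multiplicative u.c.p.\ maps into a matrix subalgebra of $Q$. First I would apply Theorem \ref{unique-F} with data $(A, \Delta, \mathcal F, \eps/2)$ to obtain finite sets $\mathcal G_0 \subseteq A$, $\mathcal P_0 \subseteq \underline{K}(A)$, $\mathcal H_1, \mathcal H_2 \subseteq A^+$ and positive constants $\delta_0, \sigma_0$ governing the stable uniqueness in $\Mat{m}$. These $\mathcal H_1, \mathcal H_2$ together with $\delta := \sigma_0/2$ will be the data output by the corollary.

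The decisive structural input is the hint immediately preceding the corollary: since $\Kone(Q) = 0$ and $\Kzero(Q) = \Ratn$ is torsion-free and divisible, the UCT yields $\mathrm{KL}(A, Q) \cong \mathrm{Hom}(\Kzero(A), \Kzero(Q))$, so the hypothesis $[\phi]_0 = [\psi]_0$ automatically upgrades to equality in $\mathrm{KL}(A, Q)$, and hence $[\phi]|_{\mathcal P_0} = [\psi]|_{\mathcal P_0}$ for the finite set $\mathcal P_0$ produced above. Next I use that $Q = \varinjlim_k \Mat{m_k}$ is the universal UHF algebra, with canonical trace-preserving conditional expectations $E_k : Q \to \Mat{m_k}$ converging pointwise to $\mathrm{id}_Q$. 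For $k$ large enough, $E_k$ approximates the identity within a small tolerance $\eta$ on the finite image set $\phi(\mathcal F \cup \mathcal G_0) \cup \psi(\mathcal F \cup \mathcal G_0) \subseteq Q$; since $E_k$ is an $\Mat{m_k}$-bimodule map, the composites $\phi' := E_k \circ \phi$ and $\psi' := E_k \circ \psi$ are u.c.p.\ maps $A \to \Mat{m_k}$ that agree with $\phi, \psi$ within $\eps/4$ on $\mathcal F$, are $\mathcal G_0$-$\delta_0$-multiplicative once $\eta$ is small enough, and (by continuity of $\underline{K}$-classes for sufficiently multiplicative maps) satisfy $[\phi']|_{\mathcal P_0} = [\psi']|_{\mathcal P_0}$. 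Because $E_k$ preserves the canonical trace of $Q$, we have $\mathrm{tr}(\phi'(h)) = \mathrm{tr}(\phi(h))$ and $\mathrm{tr}(\psi'(h)) = \mathrm{tr}(\psi(h))$, so the lower-bound and closeness hypotheses on traces transfer verbatim from the corollary's input to $\phi', \psi'$.

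Theorem \ref{unique-F} then produces a unitary $u \in \Mat{m_k} \subseteq Q$ with $\norm{\phi'(f) - u^*\psi'(f)u} < \eps/2$ for all $f \in \mathcal F$; combined with the approximation bounds $\norm{\phi(f) - \phi'(f)} < \eps/4$ and $\norm{\psi(f) - \psi'(f)} < \eps/4$, the triangle inequality yields $\norm{\phi(f) - u^*\psi(f)u} < \eps$, as required. The main obstacle is simultaneously choosing $\eta$ small enough to guarantee both sufficient multiplicativity of $\phi', \psi'$ and preservation of the $\mathcal P_0$-part of the $\underline{K}$-label; since $\mathcal P_0$ is finite and $A$ is nuclear (being type I), this is a routine quantitative continuity argument for $\underline{K}$, and the KL-reduction to $\mathrm{Hom}(\Kzero(A), \Ratn)$ is what makes the hypothesis $[\phi]_0 = [\psi]_0$ alone sufficient.
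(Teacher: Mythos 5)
Your plan is exactly the one the paper has in mind (it dismisses the proof with the one-line remark that $\mathrm{KL}(A,Q)\cong\mathrm{Hom}(\Kzero(A),\Kzero(Q))$ "so this is a straightforward consequence"), and most of your steps are sound: the trace-preserving conditional expectations $E_k$ do transport conditions (2) and (3) verbatim, and the triangle inequality at the end is fine. The KL identification is also correctly justified: $A$ is type I hence nuclear and satisfies the UCT, $\Kzero(A)$ is finitely generated by Lemma~\ref{fg-K} so $\mathrm{Pext}=0$, and $\Kzero(Q)=\Ratn$ is divisible while $\Kone(Q)=0$, so the UCT collapses $\mathrm{KL}(A,Q)$ to $\mathrm{Hom}(\Kzero(A),\Ratn)$; hence $[\phi]_0=[\psi]_0$ does give $[\phi]|_{\mathcal P_0}=[\psi]|_{\mathcal P_0}$ as elements of $\mathrm{Hom}(\underline{K}(A),\underline{K}(Q))$.

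There is, however, a genuine gap in the step ``(by continuity of $\underline{K}$-classes for sufficiently multiplicative maps) $[\phi']|_{\mathcal P_0}=[\psi']|_{\mathcal P_0}$.'' What you know is that $[\iota_k\circ\phi']|_{\mathcal P_0}=[\phi]|_{\mathcal P_0}=[\psi]|_{\mathcal P_0}=[\iota_k\circ\psi']|_{\mathcal P_0}$ in $\underline{K}(Q)$, where $\iota_k:\Mat{m_k}\hookrightarrow Q$. But Theorem~\ref{unique-F} requires equality in $\underline{K}(\Mat{m_k})$, and $(\iota_k)_*$ is \emph{not} injective on total K-theory: $\Kzero(\Mat{m_k};\Int/n)\cong\Int/n$, whereas $\Kzero(Q;\Int/n)=0$ by divisibility of $\Ratn$. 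Since $\mathcal P_0$ may contain torsion-coefficient classes (via $\mathrm{Tor}(\Kone(A),\Int/n)$, and $\Kone(A)$ can have torsion), the values $[\phi'](x)$ and $[\psi'](x)$ for such $x$ are invisible after composing with $\iota_k$, and nothing you have said forces them to coincide in $\Int/n$. So ``continuity'' is not enough. The repair is easy but has to be made explicit: since $\mathcal P_0$ is finite and $\Kone(A)$ is finitely generated, there is $N$ bounding all torsion orders relevant to $\mathcal P_0$; now pass from $\Mat{m_k}$ to $\Mat{m_{k'}}$ for $k'>k$ chosen so that $N\mid (m_{k'}/m_k)$, which is possible because $Q$ is the universal UHF algebra. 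The inclusion $\Mat{m_k}\hookrightarrow\Mat{m_{k'}}$ is multiplication by $m_{k'}/m_k$ on each $\Int/n$ coefficient group, hence zero on all the relevant torsion, so the pushed-forward classes $[\iota_{k,k'}\circ\phi']|_{\mathcal P_0}$ and $[\iota_{k,k'}\circ\psi']|_{\mathcal P_0}$ agree in $\underline{K}(\Mat{m_{k'}})$ (they agree on the torsion-free part by the $\Kzero$ hypothesis via the Bockstein operations, and on the torsion part both are zero). The embedding is trace-preserving, so conditions (2) and (3) are unaffected, and the rest of your argument goes through unchanged with $\Mat{m_{k'}}$ in place of $\Mat{m_k}$.

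Apart from this, your proposal is the paper's intended argument.
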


It is also worth pointing out the following corollary:
\begin{cor}
Let $A$ be a subhomogeneous C*-algebra. Let $\Delta: A^+_{1, q}\setminus\{0\} \to (0, 1)$ be an order preserving map. 
Let $(\mathcal F, \eps)$ be given. Then there are finite sets 
$\mathcal G\subseteq A$, 
$\mathcal H_1\subseteq A^+$, $\mathcal H_2\subseteq A^+$ and ${\mathcal P}\subseteq \underline{K}(A)$, and positive numbers $\delta, \sigma>0$ satisfying the following condition:

If $\phi, \psi: A\to \Mat{m}$ are unital $\mathcal G$-$\delta$-multiplicative maps such that 
\begin{enumerate}
\item $[\phi]|_{\mathcal P}=[\psi]|_{\mathcal P}$,
\item $\mathrm{tr}(\phi(h))>\Delta(\hat{h})$ and $\mathrm{tr}(\phi(h))>\Delta(\hat{h})$, $h\in\mathcal H_1$, and
\item $\abs{\mathrm{tr}(\phi)(h)-\mathrm{tr}(\psi)(h)}<\sigma$, $h\in\mathcal H_2$,
\end{enumerate}
then there is a unitary $u\in \Mat{m}$ such that
$$\norm{\phi(f)-u^*\psi(f)u}<\eps,\quad f\in\mathcal F.$$
\end{cor}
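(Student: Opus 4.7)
The plan is to reduce this statement to Theorem~\ref{unique-F} via the local approximation theorem (Theorem~2.2). Given data $(A, \Delta, \mathcal F, \eps)$ with $A$ unital subhomogeneous, first apply Theorem~2.2 to find a unital NCCC sub-C*-algebra $A_0 \subseteq A$ together with a finite set $\mathcal F_0 \subseteq A_0$ such that each $f \in \mathcal F$ is within $\eps/3$ of some $f_0 \in \mathcal F_0$.  (Unitality of $A_0$ is arranged by including $1_A$ among the elements to be approximated.)

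Next, transfer $\Delta$ to $A_0$.  Restriction $\tr(A) \to \tr(A_0)$ dualizes to a positive affine map $\aff(\tr(A_0)) \to \aff(\tr(A))$ sending $\widehat h_0$ to $\widehat h_A$ for $h \in A_0$; hence
$$\Delta_0 \colon (A_0)^+_{1, q} \setminus \{0\} \to (0, 1),\qquad \widehat h_0 \mapsto \Delta(\widehat h_A),$$
is well defined and order preserving. Apply Theorem~\ref{unique-F} to the NCCC $A_0$ with data $(\mathcal F_0, \eps/3, \Delta_0)$ to obtain finite sets $\mathcal G_0 \subseteq A_0$, $\mathcal H_{0, 1}, \mathcal H_{0, 2} \subseteq A_0^+$, $\mathcal P_0 \subseteq \underline K(A_0)$, and positive numbers $\delta_0, \sigma_0$.

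Finally, set $\mathcal G = \mathcal F \cup \mathcal G_0$, $\mathcal H_i = \mathcal H_{0, i}$, $\delta = \delta_0$, $\sigma = \sigma_0$, and $\mathcal P = \iota_*(\mathcal P_0) \subseteq \underline K(A)$, where $\iota \colon A_0 \hookrightarrow A$ is the inclusion.  Given unital $\mathcal G$-$\delta$-multiplicative $\phi, \psi \colon A \to \Mat{m}$ satisfying hypotheses (1)--(3), the restrictions $\phi \circ \iota$ and $\psi \circ \iota$ are unital $\mathcal G_0$-$\delta_0$-multiplicative, satisfy $[\phi \circ \iota]|_{\mathcal P_0} = [\psi \circ \iota]|_{\mathcal P_0}$ by naturality of $\underline K$, and inherit the trace conditions against $\Delta_0$ directly from the construction of $\Delta_0$.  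Theorem~\ref{unique-F} then supplies a unitary $u \in \Mat{m}$ with $\norm{\phi(f_0) - u^* \psi(f_0) u} < \eps/3$ for $f_0 \in \mathcal F_0$, and a three-term triangle inequality (using that unital approximately multiplicative maps are approximate contractions) extends this to $\norm{\phi(f) - u^* \psi(f) u} < \eps$ for $f \in \mathcal F$.  The only technical point to check carefully is the bookkeeping---that the NCCC approximation $A_0$ can be taken unital, and that enlarging $\mathcal G$ beyond $\mathcal G_0$ to include $\mathcal F$ does not disturb the approximate multiplicativity needed for the restriction; everything else is transport along the inclusion and the triangle inequality.
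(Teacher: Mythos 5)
Your proposal is correct and follows essentially the same route as the paper: the paper's proof is a one-line appeal to the local approximation of unital subhomogeneous C*-algebras by NCCCs (Theorem~2.15 of \cite{ENST-ASH}), together with Theorem~\ref{unique-F}. You have filled in exactly the bookkeeping that this appeal requires --- choosing a unital NCCC sub-C*-algebra $A_0$ with $\mathcal F \subseteq_{\eps/3} A_0$, transporting $\Delta$ along restriction of traces (well-definedness of $\Delta_0$ on $(A_0)^+_{1,q}\setminus\{0\}$ uses that for a subhomogeneous $A$ the traces $\mathrm{tr}\circ\pi$ over finite-dimensional irreducible representations separate nonzero positive elements from $0$, so $\hat h_{A_0}\neq 0$ forces $\hat h_A\neq 0$), and transporting the $K$-theory data along $\iota_*$ --- and the final $\eps/3$ triangle-inequality step using contractivity of c.p.c.\ maps.
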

\begin{proof}
This follows the fact that any subhomogeneous C*-algebra can be locally approximated by NCCCs (Theorem 2.15 of \cite{ENST-ASH}). 
\end{proof}

\section{Tracial factorization and tracial approximation}

Recall that 
\begin{defn}[\cite{LinTAF1}, \cite{EN-TApprox}]
Let $\mathcal S$ be a class of unital C*-algebras. A C*-algebra $A$ is said to be tracially approximated by the C*-algebras in $\mathcal S$, and one writes $A\in \mathrm{TA}\mathcal S$, if the following condition holds: For any finite set $\mathcal F\subseteq A$, any $\eps>0$, and any nonzero $a\in A^+$, there is a nonzero sub-C*-algebra $S\subseteq A$ such that $S\in\mathcal S$,  and if $p=1_S$, then
\begin{enumerate}
\item $\norm{pf-fp}<\eps$, $f\in\mathcal F$,
\item $pfp\in_\eps S$, $f\in\mathcal F$, and
\item $1-p$ is Murray-von Neumann equivalent to a subprojection of $\overline{aAa}$.
\end{enumerate}
\end{defn}

One particularly important class $S$ of building blocks is the class of Elliott-Thomsen algebras.
\begin{defn}(\cite{ET-PL}, \cite{point-line})
A C*-algebra $C$ is said to be an Elliott-Thomsen algebra if 
$$C=\{(a, f)\in E\oplus (F\otimes\mathrm{C}([0, 1])): f(0)=\varrho_0(a), f(1)=\varrho_1(a)\}$$
for some finite dimensional C*-algebras $E$, $F$, where $\varrho_0, \varrho_1: E\to F$ are unital homomorphisms. 
Denote by $\pi_\infty$ the standard quotient map
$$\pi_\infty: A\ni (a, f)\mapsto a\in E.$$

Let us denote the class of unital Elliott-Thomsen algebras by $\mathcal C$, and denote the class of unital Elliott-Thomsen algebras with trivial $\Kone$-group by $\mathcal C_0$.
\end{defn}

\begin{rem}
In fact, by Corollary 29.3 of \cite{GLN-TAS}, one has $\mathrm{TA}\mathcal C=\mathrm{TA}\mathcal C_0$.
\end{rem}

\begin{rem}
In fact, the class of unital Elliott-Thomsen algebras is exactly the class of NCCCs with dimensions of cells at most one; see \cite{ENST-ASH}. 
\end{rem}

For $\mathrm{TA}\mathcal C_0$ algebras, one has the following classification theorem.
\begin{thm}[Corollary 28.7 of \cite{GLN-TAS}]
Let $A, B$ be unital separable amenable simple C*-algebras satisfying the UCT. Assume that $A$, $B$ are Jiang-Su stable, and assume that $A\otimes Q\in\mathrm{TA}\mathcal C_0$ and $B\otimes Q\in \mathrm{TA}\mathcal C_0$. Then $A\cong B$ if and only if $\mathrm{Ell}(A)\cong\mathrm{Ell}(B)$.
\end{thm}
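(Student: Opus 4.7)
The ``only if'' direction is immediate since $\mathrm{Ell}$ is a functor. For the converse, assume $\mathrm{Ell}(A) \cong \mathrm{Ell}(B)$. The strategy has two layers: first classify the $Q$-stable versions $A\otimes Q$ and $B\otimes Q$ directly using the $\mathrm{TA}\mathcal C_0$ structure, and then transfer the classification to $A$ and $B$ via the Winter deformation technique, using Jiang-Su stability and the UCT.

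For the first layer, note that the given isomorphism of Elliott invariants tensors up to an isomorphism $\mathrm{Ell}(A\otimes Q) \cong \mathrm{Ell}(B\otimes Q)$, so it suffices to classify simple unital $Q$-stable $\mathrm{TA}\mathcal C_0$ algebras by their Elliott invariants. The plan is a two-sided approximate intertwining \`a la Elliott: fix exhausting sequences $\mathcal F_n \subseteq A\otimes Q$ and $\mathcal G_n\subseteq B\otimes Q$, tolerances $\eps_n \to 0$, and prescribed partial invariant isomorphisms. Using the tracial approximation property, cut down each algebra by a large Elliott-Thomsen corner $C\subseteq A\otimes Q$ with $1-1_C$ tracially small, so that $\mathcal F_n$ is almost contained in $C$. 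Next invoke an existence theorem in the spirit of Theorem \ref{est-AM} (but with $Q$ replaced by a $Q$-stable simple target---the same argument goes through because $B\otimes Q$ has divisible rational $\Kzero$ and unique Dedekind-complete tracial data, and $\mathcal C_0$ is a subclass of NCCCs with one-dimensional cells) to produce $\phi_n: C\to B\otimes Q$ inducing the prescribed $\Kzero$ and trace data on $C$ up to $\eps_n$. The uniqueness theorem Corollary \ref{unique-Q} (applied after cutting down to a matrix corner of $B\otimes Q$, which is legitimate because the target is $Q$-stable) shows that any two such lifts are approximately unitarily equivalent, enabling the intertwining to be carried through on both sides and iterated to a bona fide isomorphism $A\otimes Q\cong B\otimes Q$ in the limit.

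For the second layer, the Winter deformation technique (\cite{Winter-Z}, refined by Lin in \cite{Lin-App} and by Lin and Niu in \cite{L-N}) takes as input a classification of $\mathcal Z$-stable, UCT, simple separable unital amenable C*-algebras that become isomorphic after tensoring with $Q$, and upgrades it to a classification of the algebras themselves. Concretely, once $A\otimes Q\cong B\otimes Q$ compatibly with the given $\mathrm{Ell}(A)\cong\mathrm{Ell}(B)$, and both $A$ and $B$ absorb $\mathcal Z$, one constructs compatible embeddings of $A$ and $B$ into $\mathcal Z$-stable models and applies a $\mathrm{KK}$-theoretic intertwining argument combined with the fact that $\mathcal Z\otimes Q\cong Q$ to descend the isomorphism from the $Q$-stabilization to the $\mathcal Z$-stabilization (which equals $A$ and $B$ themselves). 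This step uses the UCT crucially to identify the $\mathrm{KK}$-class of the putative isomorphism.

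The main obstacle is the first layer, and within it the bookkeeping of the tracial remainder $1-1_C$ during the intertwining. One must ensure that the piece of $A\otimes Q$ not captured by the Elliott-Thomsen corner is, on the one hand, tracially small enough to be absorbed into the error term of the uniqueness theorem (which is controlled by a density function $\Delta$), and on the other hand, K-theoretically compatible with what has already been pinned down on the other side. This is precisely what the quantitative versions of the existence theorem (Theorem \ref{est-AM}) and the decomposition/uniqueness theorems (Theorem \ref{decp-thm} and Corollary \ref{unique-Q}) are designed for: they allow one to absorb a controlled remainder into a large multiple of a common summand while preserving the invariant to arbitrary precision. The intertwining then closes up in the limit, giving the desired isomorphism.
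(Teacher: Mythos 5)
This theorem is not proved in the present paper: it is quoted verbatim as Corollary~28.7 of \cite{GLN-TAS}, serving as a black-box input to the classification argument in Section~5. There is therefore no ``paper's own proof'' here to compare your proposal against --- the paper simply cites the result. Any review of your sketch has to be measured against what you know of \cite{GLN-TAS} rather than against anything in this manuscript.

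Taken on its own terms, your outline does point at the right two layers (classify the $Q$-stable versions, then run the Winter deformation to descend to the $\mathcal Z$-stable algebras), and the ``only if'' direction is of course trivial by functoriality. But the first layer as you describe it badly understates what is involved. You propose a two-sided Elliott intertwining built from Theorem~\ref{est-AM} and Corollary~\ref{unique-Q}, but those results produce and compare maps into $Q$ (or into matrix algebras), not into a general simple $Q$-stable $\mathrm{TA}\mathcal C_0$ target. The parenthetical ``the same argument goes through because $B\otimes Q$ has divisible rational $\Kzero$ and unique Dedekind-complete tracial data'' is not a proof; the existence and uniqueness theorems in \cite{GLN-TAS} at this level are entirely different in scale, requiring the full $\underline{K}$/$\mathrm{KL}$-machinery, control of rotation maps, asymptotic (not merely approximate) unitary equivalence, the Basic Homotopy Lemma, and range-of-invariant results for $\mathcal C_0$-limits; one cannot get there by cutting down to a matrix corner of the target. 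Similarly, the second layer is not merely ``a $\mathrm{KK}$-theoretic intertwining argument combined with $\mathcal Z\otimes Q\cong Q$''; the deformation technique of \cite{Winter-Z}, \cite{Lin-App}, \cite{L-N} hinges on realizing $\mathcal Z$ as an inductive limit of prime dimension drop algebras and on a strengthened asymptotic uniqueness theorem, and the UCT enters through lifting $\mathrm{KL}$-elements rather than through a formal identification of $\mathrm{KK}$-classes. In short: the proposal is a plausible bird's-eye map of \cite{GLN-TAS}, but it does not constitute a proof, and it is in any case answering a question the present paper does not pose, since the statement is imported here as an external theorem.
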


In this section, let us show that for any separable simple unital locally ASH algebra $A$, one has that $A\otimes Q\in\mathrm{TA}\mathcal C_0$. 

\begin{thm}\label{TFT}
Let $A$ be a unital simple separable locally approximately subhomogeneous (ASH) C*-algebra satisfying $A\cong A\otimes Q$. Then, for any finite set $\mathcal F\subseteq A$ and any $\eps>0$, there exist an Elliott-Thomsen algebra $C$ with $\Kone(C)=\{0\}$, a unital completely positive linear map $\Phi: A\to C$, and a unital embedding $\Psi: C\to A$ such that
\begin{enumerate}
\item $\Phi$ is $\mathcal F$-$\eps$-multiplicative, and
\item $\abs{\tau(f)-\tau(\Psi(\Phi(f)))}<\eps$, $f\in\mathcal F$, $\tau\in \tr(A)$.
\end{enumerate}
\end{thm}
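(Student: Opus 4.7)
The plan is to reduce to a unital NCCC sub-C*-algebra of $A$, approximate its tracial simplex by finitely many model traces, realize each model trace by a finite-dimensional-range homomorphism via the existence theorem, and then reassemble these pieces as a single embedding $\Psi$ inside $A$ along a partition of unity in projections whose traces recover the affine weights coming from that approximation. The hypothesis $A\cong A\otimes Q$ is exactly what allows the last step.

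Concretely, since $A$ is locally ASH, the approximation theorem (Theorem 2.15 of \cite{ENST-ASH}) provides a unital NCCC sub-C*-algebra $A_0\subseteq A$ containing $\mathcal F$ within $\eps/8$; replace $\mathcal F$ by a nearby finite subset of $A_0$. Apply Lemma~\ref{approx-finite-tr} to $A_0$ with the finite set $\widehat{\mathcal F}\cup\{\hat 1\}\subseteq\aff(\tr(A_0))$ and tolerance $\eps/8$ to obtain positive continuous affine maps $\theta_1\colon\aff(\tr(A_0))\to\Real^s$ and $\theta_2\colon\Real^s\to\aff(\tr(A_0))$ with $\norm{\theta_2\circ\theta_1(\hat f)-\hat f}_\infty<\eps/8$ for $f\in\mathcal F$. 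After a small perturbation (and exploiting simplicity of $A$ to ensure the relevant traces can be taken faithful), one may write these as $\theta_1(\hat a)=(\tau_1(a),\dots,\tau_s(a))$ for $\tau_i\in\tr(A_0)$ and $\theta_2(x_1,\dots,x_s)=\sum_i x_i g_i$ for nonnegative $g_i\in\aff(\tr(A_0))$ with $\sum_i g_i=\hat 1$.

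For each $i$, invoke Theorem~\ref{est-AM} to produce a rational $\kappa_i\colon\Kzero(A_0)\to\Ratn$ that is $\delta$-compatible with $\tau_i$ on a fixed set of generators of $\rho_{A_0}(\Kzero(A_0))$, together with a unital homomorphism $\phi_i\colon A_0\to\Mat{N_i}$ satisfying $[\phi_i]_0=\kappa_i$ and $|\mathrm{tr}_{N_i}(\phi_i(f))-\tau_i(f)|<\eps/8$ for $f\in\mathcal F$. Use $A\cong A\otimes Q$ to find pairwise orthogonal projections $p_1,\dots,p_s\in A$ with $\sum_i p_i=1_A$ and $|\tau(p_i)-g_i(\tau|_{A_0})|$ uniformly small in $\tau\in\tr(A)$; this is possible because $Q$-stability makes $\rho_A(\Kzero(A))$ norm-dense in the positive cone of $\aff(\tr(A))$, and Lemma~\ref{pert-sol} rationalizes the $g_i$ consistently with $\sum g_i=1$ and with the prescribed $\kappa_i$. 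Because each corner $p_iAp_i$ is again $Q$-stable, there is a unital embedding $\iota_i\colon\Mat{N_i}\hookrightarrow p_iAp_i$. Set
\[
C=\Mat{N_1}\oplus\cdots\oplus\Mat{N_s},\qquad \Psi((b_i))=\sum_i\iota_i(b_i),
\]
and define the unital $*$-homomorphism $\Phi_0\colon A_0\to C$ by $\Phi_0(a)=(\phi_1(a),\dots,\phi_s(a))$; extend $\Phi_0$ to a unital completely positive map $\Phi\colon A\to C$ via Arveson's extension theorem. Since $C$ is finite-dimensional, it is Elliott--Thomsen with $\Kone(C)=0$, and $\Psi$ is a unital embedding. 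Multiplicativity of $\Phi$ on $\mathcal F$ within $\eps$ is immediate from $\Phi_0$ being a genuine homomorphism, from $\Phi$ being contractive, and from $\mathcal F\subseteq_{\eps/8} A_0$. For the trace estimate, unitality of $\iota_i$ into $p_iAp_i$ together with uniqueness of trace on $\Mat{N_i}$ gives $\tau(\iota_i(b))=\tau(p_i)\,\mathrm{tr}_{N_i}(b)$ for every $\tau\in\tr(A)$ and every $b\in\Mat{N_i}$, so that, choosing $f_0\in A_0$ close to $f\in\mathcal F$,
\[
\tau(\Psi(\Phi(f)))\approx\sum_i\tau(p_i)\,\mathrm{tr}_{N_i}(\phi_i(f_0))\approx\sum_i g_i(\tau|_{A_0})\,\tau_i(f_0)=\theta_2(\theta_1(\hat{f_0}))(\tau|_{A_0})\approx\tau(f_0)\approx\tau(f),
\]
with each approximation made smaller than $\eps/5$ by appropriate choice of parameters.

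The main obstacle will be the coordination in the middle step: the rational $K_0$-data $\kappa_i$ demanded by the existence theorem must be selected consistently with the projections $p_i$ whose traces approximate $g_i$, because the identity $\tau(\iota_i(b))=\tau(p_i)\,\mathrm{tr}_{N_i}(b)$ ties these two choices together. This consistency is arranged by a single application of Lemma~\ref{pert-sol} to the combined linear system encoding $\sum p_i=1$, the values $\tau_i$ applied to a generating set of $\Kzero(A_0)$, and the rank data of the $\phi_i$; the role is exactly analogous to the use of Lemma~\ref{pert-sol} inside the proof of Lemma~\ref{est-exact}.
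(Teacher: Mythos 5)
The proposal is incorrect; the gap is concrete and fatal. You try to make the target algebra $C$ finite-dimensional, namely $C=\Mat{N_1}\oplus\cdots\oplus\Mat{N_s}$, and build the embedding $\Psi$ by filling orthogonal corners $p_iAp_i$ with $\Mat{N_i}$. Any such $\Psi$ gives
\[
\tau(\Psi(b))=\sum_{i=1}^s \tau(p_i)\,\mathrm{tr}_{N_i}(b_i)\qquad(\tau\in\tr(A),\ b\in C),
\]
so the affine function $\tau\mapsto\tau(\Psi(\Phi(f)))$ always lies in the $\Real$-linear span of $\hat p_1,\dots,\hat p_s\subseteq\rho_A(\Kzero(A))$. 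To make this approximate $\hat f$ you therefore need, in effect, $\rho_A(\Kzero(A))$ (equivalently its rational span) to be norm-dense in $\aff(\tr(A))$. You assert this follows from $Q$-stability, but that is false: $Q$-stability gives $\Kzero(A)\cong\Kzero(A)\otimes\Ratn$, yet the image of $\rho_A$ is only the $\Ratn$-span of the original $\rho_{A}(\Kzero)$, which is dense if and only if $A$ has real rank zero. There are simple separable unital Jiang--Su stable ASH algebras $A$ (e.g.\ with $\Kzero(A)\cong\Int$ mapping onto the constants but with $\tr(A)$ a nontrivial simplex) for which $\rho_{A\otimes Q}(\Kzero(A\otimes Q))=\Ratn\cdot\hat 1$ is nowhere near dense. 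For such $A$ there simply are no orthogonal projections $p_1,\dots,p_s$ with $\widehat{p_i}$ approximating the functions $g_i$, and the trace estimate (2) cannot be achieved with finite-dimensional $C$. (Moreover, if your construction were valid, feeding it into Theorem~\ref{WTA} would make every such $A\otimes Q$ tracially AF, which is also false.)

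This is exactly why the paper cannot avoid genuine one-dimensional Elliott--Thomsen building blocks. The intervals in $C$ allow the trace simplex $\tr(C)$ to be an arbitrary metrizable Choquet simplex, and the embedding $\Psi$ is not built by hand out of corner projections but is produced by Robert's classification of morphisms via $\mathrm{Cu}^\sim$ (Theorem~1 of \cite{Robert-Cu}), after first realizing the \emph{entire} invariant $((G,G^+,u),\tr(A),\rho_A)$ by an inductive limit of Elliott--Thomsen algebras (Theorem~5.2.2 of \cite{point-line}). On the $\Phi$ side, the paper does not use Arveson extension at all: it discretizes each interval of $C_1$, applies the existence theorem (Theorem~\ref{est-AM}) at each partition point, then uses the uniqueness theorem (Corollary~\ref{unique-Q}) and small exponential length of unitaries in $Q$ to conjugate consecutive homomorphisms close together, and finally linearly interpolates to get an $\mathcal F$-$\eps$-multiplicative (but non-multiplicative) u.c.p.\ map into $C_1$. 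That is the machinery your proposal skips, and it is not optional: it is what injects the affine structure of $\tr(A)$ into $C$ without needing projections in $A$ with prescribed traces.

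A secondary, smaller issue: the traces $\tau_i$ you extract from the components of $\theta_1$ in Lemma~\ref{approx-finite-tr} are in general point-evaluations on the cells of the NCCC and need not satisfy the uniform lower bounds $\tau_i(h)>\sigma$ required by Theorem~\ref{est-AM}; simplicity of $A$ gives lower bounds only for traces of $A_0$ that arise as restrictions from $\tr(A)$, and the $\tau_i$ need not be of that form. But this is dwarfed by the density error above.
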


\begin{proof}
Without loss of generality, one may assume that $1\in\mathcal F$ and each element of $\mathcal F$ is self-adjoint and has norm at most $1$. 

Let $A$ be a unital separable simple locally ASH algebra satisfying $A\cong A\otimes Q$. By Theorem 2.15 of \cite{ENST-ASH}, the C*-algebra $A$ can be locally approximated by unital NCCCs. Therefore, without loss of generality, one may assume that there is a sub-C*-algebra $A_1\subseteq A$ such that $A_1$ is a NCCC and
$\mathcal F\subseteq A_1.$

Put
$$G_{A_1}=\rho_{A_1}(\Kzero(A_1)),\ G^+_{A_1}=\rho_{A_1}(\Kzero(A_1))\cap\aff^+(\tr(A_1)),\ u_{A_1}=\rho_{A_1}([1]),$$
and fix a set $\{p_1, ..., p_n\}$ which generates the group $G_{A_1}$. Note that by Lemma \ref{fg-quotient}, the positive cone $G^+_{A_1}$ is finitely generated.

For each $h\in A_+$, define 
$$\Delta(h)=\inf\{\tau(\iota(h)): \tau\in \tr(A)\},$$
where $\iota: A_1\to A$ is the embedding map. Since $A$ is simple, the map $\Delta$ induces a order preserving map from $A_+^{1, q}$ to $(0, 1)$. Let us still denote this by $\Delta$.

Let $\mathcal H_1\subseteq (A_1)_+$, $\mathcal H_2\subseteq (A_1)_+$ and $\delta_0>0$ be the finite sets and constant of Corollary \ref{unique-Q} with respect to $\mathcal F\cdot\mathcal F$, $\eps/4$, and $\Delta/4$.

Let $\mathcal H$ be the subset of Theorem \ref{est-AM} with respect to $A_1$, $\mathcal F\cup\mathcal H_1\cup\mathcal H_2$ (in the place of $\mathcal F$), $\min\{\eps/4, \delta_0/4, \Delta(\hat{h})/4: h\in\mathcal H_1\}$ (in the place of $\eps$). Put
$$\sigma=\frac{1}{2}\min\{\Delta(\hat{h}): h\in\mathcal H\},$$
and denote by $\delta_1$ the constant of Theorem \ref{est-AM} with respect to $\sigma$. 

Put
$$G=\rho_A(\Kzero(A)),\ G^+=\rho_A(\Kzero(A))\cap\aff^+(\tr(A)),\ u=\rho_A([1]).$$
Then $(G, G^+, u)$ is a unperforated order-unit group, and there is a natural pairing between $(G, G^+, u)$ and $\tr(A)$ induced by $\rho_A$. Still denote this pairing map by $\rho_A$.
Note that one has the following commutative diagram:
\begin{displaymath}
\xymatrix{
\Kzero(A_1) \ar[r]^{[\iota]_0} \ar[d]^{\rho_{A_1}} & \Kzero(A) \ar[d]_{\rho_A}\\
G_{A_1} \ar[r]^{(\iota)_*} & G.
}
\end{displaymath}

Consider $((G, G^+, u), \tr(A), \rho_A)$. By Theorem 5.2.2 of \cite{point-line}, there is an inductive system $C'=\varinjlim(C'_i, \eta'_i)$ with $C'_i\in\mathcal C_0$ and $\eta_i$ injective such that there is an isomorphism
$$\Xi: ((G, G^+, u), \tr(A), \rho_A) \to ((\Kzero(C'), \Kzero(C'), [1]), \tr(C'), \rho_C').$$
Consider $C=C'\otimes Q=\varinjlim(C_i, \eta_i)$, where $C_i=C'_i\otimes Q$ and $\eta_i=\eta'_i\otimes\mathrm{id}_Q$. One has
$$((G, G^+, u), \tr(A), \rho_A) \cong ((\Kzero(C), \Kzero(C), [1]), \tr(C), \rho_C),$$
and let us still denote by $\Xi$ the isomorphism. In the remaining part of the paper, let us also use $\Xi$ to denote its restriction to $G$ or to $\aff(\tr(A))$, depending on the context.

Consider the following diagram:
\begin{displaymath}
\xymatrix{
G_{A_1}\ar[rrr]^{[\iota]_0}& & & G \ar[d]^\Xi \\
\Kzero(C_1) \ar[r]_-{[\eta_1]_0} & \Kzero(C_2) \ar[r]_-{[\eta_2]_0} & \cdots \ar[r] & \Kzero(C).
}
\end{displaymath}
Since the positive cone of $G_{A_1}$ is finitely generated (Lemma \ref{fg-quotient}), the positive map $[\iota]_0$ can be lifted to a positive homomorphism $G_{A_1}\to \Kzero(C_n)$ for sufficiently large $n$. Without loss of generality, one may assume that $[\iota]_0$ has a lifting $\kappa: G_{A_1}\to \Kzero(C_1)$, making the diagram commutative:
\begin{displaymath}
\xymatrix{
G_{A_1}\ar[rrr]^{[\iota]_0}\ar[d]_\kappa & & & G \ar[d]^\Xi \\
\Kzero(C_1) \ar[r]_-{[\eta_1]_0} & \Kzero(C_2) \ar[r]_-{[\eta_2]_0} & \cdots \ar[r] & \Kzero(C).
}
\end{displaymath}

By Lemma \ref{approx-finite-tr}, after a telescoping of the inductive system $(C_i, \eta_i)$, there is also an approximate lifting, making the diagram of affine function spaces,
\begin{displaymath}
\xymatrix{
\aff(\tr(A_1))\ar[rrr]^{(\iota)_*}\ar[d]^\gamma & & & \aff(\tr(A))) \ar[d]^\Xi \\
\aff(\tr(C_1))) \ar[r]_-{(\eta_1)_*} & \aff(\tr(C_2))) \ar[r]_-{(\eta_2)_*} & \cdots \ar[r] & \aff(\tr(C))),
}
\end{displaymath}
approximately commutative, and such that
\begin{equation}\label{lift-cpt}
\abs{\tau(\kappa([p_i]))-\gamma([p_i])(\tau)}<\delta_1,\quad \tau\in\tr(C_1),\ 1\leq i\leq n,
\end{equation}
\begin{equation}\label{lift-lbd}
\gamma(\hat{h})(\tau) > \frac{1}{2}\Delta(h)>\sigma,\quad h\in\mathcal H\cup\mathcal H_1,\ \tau\in\tr(C_1),
\end{equation}
and
\begin{equation}\label{lift-tr}
\abs{(\Xi^{-1}\circ (\eta_{1, \infty})_*\circ\gamma)(\hat{f})(\tau)-(\iota)_*(\hat{f})(\tau)} <\eps/8,\quad \tau\in\tr(A),\ f\in \mathcal F.
\end{equation}

Write 
$$C_1=C_1'\otimes Q=\{(a, f)\in E\oplus (F\otimes\mathrm{C}([0, 1])): f(0)=\varrho_0(a), f(1)=\varrho_1(a)\},$$
where 
$$E=\bigoplus_{i=1}^p Q,\quad F=\bigoplus_{j=1}^l Q,$$
for natural numbers $p, l$, and $\varrho_0, \varrho_1: E\to F$ are unital homomorphisms.

On each interval $[0, 1]_j$, choose a partition
$$0=t_0<t_1<t_2<\cdots<t_{k-1}< t_k = 1$$
such that 
\begin{equation}\label{dense-part}
\abs{\gamma(\hat{f})(\tau_s) - \gamma(\hat{f})(\tau_{t_i})} < \min\{\eps/4, \delta_0/2\},\quad s\in[t_i, t_{i+1}], f\in\mathcal F\cup\mathcal H_2,
\end{equation}
where $\tau_s=\mathrm{tr}\circ\pi_s$ and $\mathrm{tr}$ is the canonical trace of $Q$. One may assume that $k$ is sufficiently large that $$2\pi/(k-1)<\eps/8.$$

For each $0< i < k$, define
$$\kappa_i=[\pi_{t_i}]_0\circ\kappa: \Kzero(A_1) \to \Kzero(Q)\cong\Ratn,$$
and
$$\gamma_i=(\pi_{t_i})_*\circ\gamma: \aff(\tr(A_1)) \to \aff(\tr(Q))\cong\Real.$$
By \eqref{lift-cpt}, each pair $(\kappa_i, \gamma_i)$ is $\delta_1$-compatible on $[p_i]$, $1\leq i\leq n$. By \eqref{lift-lbd}, one has that 
$$\gamma_i(\hat{h})(\mathrm{tr})>\sigma,\quad h\in\mathcal H.$$
Therefore, by Theorem \ref{est-AM}, there is a homomorphism $\phi_i: A_1\to Q$ such that
$$[\phi_i]_0=\kappa_i$$
and
\begin{equation}\label{small-pert-est}
\abs{\gamma_i(\hat{h})(\mathrm{tr})-\mathrm{tr}(\phi_i(h))} < \min\{\eps/4, \delta_0/4, \Delta(\hat{h})/4: h\in\mathcal H_1\},\quad h\in\mathcal F\cup\mathcal H_1\cup\mathcal H_2.
\end{equation}
Together with \eqref{lift-lbd}, it then follows that  
\begin{equation}\label{lbd-trace-final}
\mathrm{tr}(\phi_i(h))> \gamma_i(\hat{h})(\mathrm{tr})-\frac{1}{4}\Delta(\hat{h})>\frac{1}{4}\Delta(\hat{h}),\quad h\in\mathcal H_1.
\end{equation}
It also follows from \eqref{small-pert-est} and \eqref{dense-part} that for any $h\in \mathcal H_2$ and any $1\leq i \leq k-2$, 
\begin{eqnarray}\label{trace-close}
\abs{\mathrm{tr}(\phi_i(h))-\mathrm{tr}(\phi_{i+1}(h))} & \leq & \abs{\mathrm{tr}(\phi_i(h))-\gamma_i(\hat{h})(\mathrm{tr})}+\abs{\gamma_i(\hat{h})(\mathrm{tr})-\gamma_{i+1}(\hat{h})(\mathrm{tr})}  \\
&& + \abs{\mathrm{tr}(\phi_{i+1}(h))-\gamma_{i+1}(\hat{h})(\mathrm{tr})} \nonumber \\
&<& \delta_0/4 + \delta_0/2 +\delta_0/4 =\delta_0. \nonumber
\end{eqnarray}
Since $\pi_i$ is homotopic to $\pi_{i+1}$, it is clear that $[\kappa_i]=[\kappa_{i+1}]$, $1\leq i\leq k-2$, and therefore
\begin{equation}\label{same-k}
[\phi]_0=[\phi_{i+1}]_0,\quad 1\leq i\leq k-2.
\end{equation}

Denote by $\pi_\infty: C_1\to E$ the standard quotient map, and consider 
$$\kappa_\infty=[\pi_\infty]_0\circ\kappa: \Kzero(A_1)\to\Kzero(E)\quad \mathrm{and}\quad \gamma_\infty=[\pi_\infty]\circ\gamma: \aff(\tr(A_1))\to\aff(\tr(E)).$$
The same argument as above shows that there is a homomorphism $\phi_\infty: A_1\to E$ such that 
\begin{equation}\label{same-k-bdary}
[\phi_\infty]_0=\kappa_\infty,
\end{equation}
and
\begin{equation}\label{small-pert-bdary}
\abs{\gamma_\infty(\hat{h})(\tau) - \tau(\phi_\infty(h))}< \min\{\eps/4, \delta_0/4, \Delta(\hat{h})/4: h\in\mathcal H_1\},\quad h\in\mathcal H_1\cup\mathcal H_2,\ \tau\in\tr(E).
\end{equation}

Define $$\phi_0=\varrho_0\circ\phi_\infty \quad\mathrm{and}\quad \phi_k=\varrho_1\circ\phi_\infty $$
and consider the restrictions of these maps to the $j$-th direct summand; still denote them by $\phi_0$ and $\phi_k$ respectively. It then follows from \eqref{same-k-bdary} that
\begin{equation}\label{same-k-all}
[\phi_0]_0=[\phi_k]=[\phi_i]_0,\quad 1\leq i\leq k-1,
\end{equation}
and it follows from \eqref{small-pert-bdary} and \eqref{lift-lbd} that
\begin{equation}\label{lbd-trace-bdary}
\mathrm{tr}(\phi_0(h)), \mathrm{tr}(\phi_k(h)) > \frac{1}{4}\Delta(\hat{h}),\quad h\in\mathcal H_1.
\end{equation}
Moreover, with \eqref{small-pert-bdary} and \eqref{dense-part}, the same argument as that of \eqref{trace-close} shows that
\begin{equation}\label{trace-close-bdary}
\abs{\mathrm{tr}(\phi_0(h))-\mathrm{tr}(\phi_1(h))}<\delta_0\quad\mathrm{and}\quad \abs{\mathrm{tr}(\phi_{k-1}(h))-\mathrm{tr}(\phi_k(h))}<\delta_0,\quad h\in \mathcal H_2.
\end{equation}

Thus, with \eqref{same-k-all}, \eqref{lbd-trace-final}, \eqref{lbd-trace-bdary}, \eqref{trace-close} and \eqref{trace-close-bdary}, Corollary \ref{unique-Q} implies that there are unitaries
$u_1, u_2, ..., u_{k-1}\in Q$
such that
$$
\norm{\phi_i(f)-u_{i+1}^*\phi_{i+1}(f)u_{i+1}}<\eps/4,\quad f\in\mathcal F\cdot\mathcal F,\ 0\leq i\leq k-2.
$$
Define $v_0=1$, and
$$v_i=u_iu_{i-1} \cdots u_1,\quad i=1, ..., k-1.$$
Then, for any $0\leq i\leq k-2$ and any $f\in\mathcal F\cdot\mathcal F$, one has
\begin{eqnarray*}
&&\norm{\mathrm{Ad}(v_i)\circ\phi_i(f)-\mathrm{Ad}(v_{i+1})\circ\phi_{i+1}(f)}\\
&=& \norm{(u_i \cdots u_1)^*\phi_i(f)(u_i \cdots u_1)-(u_{i+1} \cdots u_1)^*\phi_{i+1}(f)(u_{i+1} \cdots u_1)}\\
&=& \norm{\phi_i(f)-u_{i+1}^*\phi_{i+1}(f)u_{i+1}}<\eps/4.
\end{eqnarray*}
Replacing each homomorphism $\phi_i$ by $\mathrm{Ad}(v_i)\circ\phi_i$ for $i=1, ..., k-1$, and still denoting it by $\phi_i$, one has
$$
\norm{\phi_i(f)-\phi_{i+1}(f)}<\eps/4,\quad f\in\mathcal F\cdot\mathcal F,\ 0\leq i\leq k-2.
$$
Note that the replacement of $\phi_i$ does not change the induced map on the invariant, and hence one still has 
$$[\phi_{k-1}]_0= [\phi_k]_0,$$
$$\mathrm{tr}(\phi_0(h)), \mathrm{tr}(\phi_k(h)) > \frac{1}{4}\Delta(\hat{h}),\quad h\in\mathcal H_1,$$
and
$$\abs{\mathrm{tr}(\phi_{k-1}(h))-\mathrm{tr}(\psi_k(h))}<\delta_0,\quad h\in \mathcal H_2.$$

Applying Corollary \ref{unique-Q} again, we obtain a unitary $w\in Q$ such that 
$$\norm{w^*\phi_{k-1}(f)w-\phi_k(f)}<\eps/4,\quad f\in\mathcal F\cdot\mathcal F.$$
Since $Q$ is AF, and any unitary in a finite dimensional C*-algebra can be connected to the identity along a path with length at most $\pi$ (i.e., it has exponential length at most $\pi$), there are unitaries
$$1=w_0, w_1, ..., w_{k-2}, w_{k-1}=w \in Q$$
such that 
$$\norm{w_{i}-w_{i-1}}< 2\pi/(k-1)<\eps/8.$$
Hence,
$$\norm{w^*_i \phi_i(f)w_i-w^*_{i+1}\phi_{i+1}(f)w_{i+1}}<3\eps/8,\quad f\in\mathcal F\cdot\mathcal F, 0\leq i\leq k-2.$$
Replace each homomorphism $\phi_i$ again by $\mathrm{Ad}(w_i)\circ\phi_i$, $1\leq i \leq k-1$, and still denote it by $\phi_i$. One then has 
\begin{equation}\label{app-div}
\norm{\phi_i(f)-\phi_{i+1}(f)}<3\eps/8,\quad f\in\mathcal F\cdot\mathcal F,\ 0\leq i\leq k-1.
\end{equation}

Define the positive linear map $\phi: A_1\to \mathrm{C}([0, 1]_j, Q)$ by
\begin{displaymath}
\Phi_j(f)(t)=\frac{t_{i+1}-t}{t_{i+1}-t_{i}}\phi_i(f) + \frac{t-t_i}{t_{i+1}-t_i}\phi_{i+1}(f),\quad \textrm{if $t\in[t_i, t_{i+1}]$}.
\end{displaymath}
By \eqref{app-div}, the map $\Phi_j$ is $\mathcal F$-$\eps$-multiplicative. It also follows from \eqref{dense-part} that if $t\in[t_i, t_{i+1}]$, then, for any $f\in\mathcal F$,
\begin{eqnarray}\label{app-aff-ind}
&& \abs{\gamma(\hat{f})(\tau_t)-\tau_t(\Phi_j(f))} \\
& \leq& \abs{\gamma(\hat{f})(\tau_{t_i})-\tau_t(\Phi_j(f))} + \eps/4\quad \textrm{(by \eqref{dense-part})}  \nonumber \\
&=& \abs{\gamma_i(\hat{f})(\mathrm{tr})  -(\frac{t_{i+1}-t}{t_{i+1}-t_{i}}\mathrm{tr}(\phi_i(f)) + \frac{t-t_i}{t_{i+1}-t_i}\mathrm{tr}(\phi_{i+1}(f)))}+\eps/4 \nonumber \\
&\leq &\abs{\gamma_i(\hat{f})(\mathrm{tr}) -\mathrm{tr}(\phi_i(f))} +5\eps/8\quad \textrm{(by \eqref{app-div})}  \nonumber \\
&< & \eps/4+5\eps/8=7\eps/8\quad \textrm{(by \eqref{small-pert-est})}. \nonumber
\end{eqnarray}

Repeat this construction for all $j=1, ..., l$, and note that the maps $\Phi_1, \Phi_2, ..., \Phi_l$ induce a map $\Phi: A_1\to C_1$. Since each $\Phi_j$ is $\mathcal F$-$\eps$ multiplicative, the map $\Phi$ is $\mathcal F$-$\eps$-multiplicative. By \eqref{app-aff-ind}, one has
\begin{equation}\label{app-tr-0}
\abs{\gamma(\hat{f})(\tau)-\tau(\Phi(f))}<7\eps/8,\quad f\in\mathcal F, \tau\in\tr(C_1).
\end{equation}

Now, let us construct an embedding $\Psi: C_1\to A$ such that
$$|\tau(f)-\tau(\Psi(\Phi(f)))|<\eps,\quad f\in\mathcal H, \tau\in\tr(A).$$

Since $A\cong A\otimes Q$, the subgroup $H:=\ker\rho_A\subseteq \Kzero(A)$ is divisible, and therefore
the exact sequence
\begin{displaymath}
\xymatrix{
0\ar[r] & H \ar[r] & \Kzero(A) \ar[r]^-{\rho_A} & G \ar[r] & 0
}
\end{displaymath}
splits. 
Pick a decomposition
$$\Kzero(A)=G\oplus H.$$ Since $\Kzero(A)$ is weakly unperforated, the order on $(G\oplus H)$ is completely determined by that on $G$.

Define $$\kappa': G \ni g\mapsto (g, 0) \in G\oplus H=\Kzero(A).$$
Then $\kappa'$ is a positive homomorphism, and the pair
$$(\kappa'\circ(\Xi^{-1}|_{\Kzero(C)}), \Xi|_{\aff(\tr(C))})$$
is compatible. It induces a positive homomorphism
$$\theta: \mathrm{Cu}^{\sim}(C) \to \mathrm{Cu}^{\sim}(A).$$
By Theorem 1 of \cite{Robert-Cu}, there is a homomorphism $\psi: C\to A$ such that the Cuntz map induced by $\psi$ is $\theta$. In particular, one has that
\begin{equation}\label{partial-lift}
(\psi)_*=\Xi^{-1}|_{\aff(\tr(C))}.
\end{equation}

Then the map
$$\Psi=\psi\circ\eta_{1, \infty}$$
satisfies the conclusion of the theorem (together with $\Phi$).
Indeed, since $C$ is simple, the map $\psi$ is an embedding, and therefore $\Psi$ is an embedding.
Moreover, for any $f\in \mathcal F$, one has 
\begin{eqnarray*}
&& \abs{\tau(\iota(f))-\tau(\Psi\circ\Phi(f))} \\
& = & \abs{(\iota)_*(\hat{f})(\tau)-(\Psi)_*((\Phi)_*(\hat{f}))(\tau)} \\
&=&  \abs{(\iota)_*(\hat{f})(\tau)-(\psi)_*((\eta_{i, \infty}\circ\Phi)_*(\hat{f}))(\tau)} \\
& = & \abs{(\iota)_*(\hat{f})(\tau)-(\Xi^{-1}\circ(\eta_{i, \infty})_*\circ(\Phi)_*)(\hat{f})(\tau)}\quad \textrm{(by \eqref{partial-lift})}\\
&< & \abs{(\iota)_*(\hat{f})(\tau)-(\Xi^{-1}\circ(\eta_{i, \infty})_*\circ\gamma)(\hat{f})(\tau)} + 7\eps/8\quad \textrm{(by \eqref{app-tr-0})} \\
&\leq & \eps/8+ 7\eps/8=\eps\quad \textrm{(by \eqref{lift-tr})},
\end{eqnarray*}
as desired.
\end{proof}

\begin{rem}
With a slight modification (a perturbation of the linear map $\gamma$), the same argument as Theorem \ref{TFT} shows that the same statement holds for C*-algebras which are tracially approximated by subhomogeneous C*-algebras.
\end{rem}

The passage from Theorem \ref{TFT} to the actual tracial approximation is an application of the following very important theorem due to Winter:
\begin{thm}[Theorem 2.2 of \cite{Winter-TA}]\label{WTA}
Let $\mathcal S$ be a class of separable unital C*-algebras which have a finite presentation with weakly stable relations. Suppose further that $\mathcal S$ is closed under taking direct sums and under taking tensor products with finite dimensional C*-algebras, and that $\mathcal S$ contains all finite dimensional C*-algebras.

Let $A$ be a separable, simple, unital C*-algebra with $\mathrm{dim}_{\mathrm{nuc}} A<\infty$ and $\tr(A)\neq\O$, and let
$$(
\xymatrix{
A\ar[r]^{\sigma_i} & B_i \ar[r]^{\varrho_i}& A 
}
)_{i\in\mathbb N}$$
be a system of maps with the following properties:
\begin{enumerate}
\item $B_i\in\mathcal S$, $i\in\mathbb N$,
\item $\varrho_i$ is an embedding for each $i\in\mathbb N$,
\item $\sigma_i$ is a completely positive contraction for each $i\in\mathbb N$,
\item $\bar{\sigma}: A\to \prod_{i\in\mathbb N}B_i/\bigoplus_{i\in\mathbb N}B_i$ induced by $\sigma_i$ is a unital homomorphism
\item $\sup\{\abs{\tau(\varrho_i\sigma_i(a)-a)}: \tau\in\tr(A)\}\to 0$, as $i\to\infty$ for each $a\in A$.
\end{enumerate}
Then $A\otimes Q\in\mathrm{TA}\mathcal S$.
\end{thm}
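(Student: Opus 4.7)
My plan is to verify the definition of $\mathrm{TA}\mathcal S$ directly. Fix a finite set $\mathcal F\subseteq A\otimes Q$, an $\eps>0$, and a nonzero $a\in(A\otimes Q)^+$. Since $Q\cong Q\otimes Q$ and $Q=\varinjlim\Mat{N_k}$, a routine approximation lets me assume $\mathcal F\subseteq A\otimes\Mat{N}\otimes 1_Q$ for some $N$; closure of $\mathcal S$ under tensoring with finite dimensional algebras ensures that any building block produced for the $A$-slot yields, after tensoring with $\Mat{N}$, another member of $\mathcal S$.

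The argument rests on two ingredients. First, weak stability of the finite presentation of $B_i$: because $\bar\sigma$ is a genuine unital $\ast$-homomorphism, each $\sigma_i$ is asymptotically multiplicative on any prescribed finite set, so weak stability applied to a generating set of $B_i$ pulled back through $\sigma_i$ yields, for all large $i$, an honest unital homomorphism $\tilde\sigma_i:A\to B_i$ within $\eps/3$ of $\sigma_i$ on the relevant finite set. Composing with $\varrho_i$ and tensoring with $\mathrm{id}_{\Mat{N}}$ produces a unital sub-C*-algebra $S'\cong B_i\otimes\Mat{N}\in\mathcal S$ of $A\otimes Q$ that approximately absorbs $\mathcal F$. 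Second, the hypothesis $\mathrm{dim}_{\mathrm{nuc}}A<\infty$, which by the Toms--Winter theorem forces $A$ (and hence $A\otimes Q$) to be $\mathcal Z$-stable and to have strict comparison of positive elements by traces.

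The main obstacle is to convert the tracial convergence $\sup_{\tau\in\tr(A)}\abs{\tau(\varrho_i\sigma_i(b)-b)}\to 0$ into a genuine operator-norm approximation through a cut-down projection $p=1_S$; the tracial hypothesis by itself is far too weak to imply $\norm{[p,f]}<\eps$ or $pfp\in_\eps S$. I would resolve this by working in the sequence algebra $(A\otimes Q)_\omega$: the lifted maps $(\varrho_i\circ\tilde\sigma_i)\otimes\mathrm{id}_{\Mat{N}}$ assemble into an almost-central c.p.\ element whose range sits in a copy of an $\mathcal S$-algebra and whose defect from the unit is tracially null. Strict comparison then upgrades this tracial smallness to Cuntz subordination to the image of $a$, and a standard reindexing argument produces a projection $p\in A\otimes Q$ satisfying $\norm{[p,f]}<\eps$, $pfp\in_\eps pS'p$, and $1-p\precsim a$. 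The technical heart — and the reason finite nuclear dimension is invoked rather than merely $\mathcal Z$-stability — is the combinatorial use of the finite-colour order-zero decomposition supplied by $\mathrm{dim}_{\mathrm{nuc}}A<\infty$ to manufacture the almost-central projection inside $S'$ with both the commutator bound and the Cuntz estimate holding simultaneously.
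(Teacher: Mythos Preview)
The paper does not prove this theorem; it is quoted verbatim as Theorem~2.2 of \cite{Winter-TA} and used as a black box in the proof of Theorem~\ref{classification}. There is therefore no proof in the paper to compare your proposal against.

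That said, your sketch contains a genuine error in the use of weak stability. You write that weak stability yields ``an honest unital homomorphism $\tilde\sigma_i:A\to B_i$'', but weak stability of the relations presenting $B_i$ says that approximate $\ast$-homomorphisms \emph{from} $B_i$ into a target algebra can be perturbed to genuine ones --- it says nothing about maps \emph{into} $B_i$. The map $\sigma_i:A\to B_i$ is given only as a completely positive contraction, and there is no mechanism to upgrade it to a $\ast$-homomorphism; indeed this is typically impossible, since $A$ is simple and infinite-dimensional while $B_i$ may even be finite-dimensional. In Winter's actual argument the weak stability enters in the opposite direction: after one has cut down by an approximately central projection $p$ in $A\otimes Q$, the compression $p(\varrho_i\otimes\mathrm{id})(B_i\otimes\Mat{N})p$ is only approximately a homomorphic image of $B_i\otimes\Mat{N}$, and weak stability is what lets one perturb it to an honest sub-C*-algebra $S\in\mathcal S$ of $p(A\otimes Q)p$.

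Your final paragraph is closer in spirit to the real argument, but the sentence ``a standard reindexing argument produces a projection $p$'' conceals the entire difficulty. Manufacturing $p$ with simultaneous control of $\norm{[p,f]}$, $\mathrm{dist}(pfp,S)$, and $1-p\precsim a$ is precisely the substance of Winter's proof; it uses the extra room in the identification $A\otimes Q\cong (A\otimes Q)\otimes Q$, the colour decomposition coming from finite nuclear dimension, and strict comparison in a specific and delicate way. It is not a reindexing trick, and as written your proposal does not supply it.
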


With this and Theorem \ref{TFT}, one has
\begin{thm}\label{classification}
Let $A$ be a unital simple separable locally ASH C*-algebra. Then $A\otimes Q\in\mathrm{TA}\mathcal C_0$, where $\mathcal C_0$ is the class of unital Elliott-Thomsen algebras with trivial $\Kone$-group. In particular, if $A\cong A\otimes\mathcal Z$, where $\mathcal Z$ is the Jiang-Su algebra, then $A$ is classifiable (by means of the naive Elliott invariant). (The converse is also true.)
\end{thm}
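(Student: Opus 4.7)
The plan is to combine Theorem \ref{TFT} with Winter's tracial approximation theorem (Theorem \ref{WTA}), applied not to $A$ itself but to $A\otimes Q$. First I would observe that $A\otimes Q$ is again a unital simple separable locally ASH C*-algebra: any net of ASH subalgebras locally approximating $A$, tensored with the matrix subalgebras of $Q=\varinjlim \Mat{k_n}$, furnishes ASH subalgebras of $A\otimes Q$ whose union is dense; simplicity is preserved since $Q$ is simple nuclear. Moreover $(A\otimes Q)\otimes Q\cong A\otimes Q$, so Theorem \ref{TFT} is available for $A\otimes Q$. Fix an increasing sequence of finite subsets $\mathcal F_1\subseteq \mathcal F_2\subseteq \cdots \subseteq A\otimes Q$ with dense union and $\eps_i\searrow 0$. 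For each $i$, apply Theorem \ref{TFT} with data $(\mathcal F_i,\eps_i)$ to obtain $B_i\in\mathcal C_0$, a unital $\mathcal F_i$-$\eps_i$-multiplicative completely positive map $\sigma_i:A\otimes Q\to B_i$, and a unital embedding $\varrho_i:B_i\to A\otimes Q$ such that
$$\sup_{\tau\in\tr(A\otimes Q)}\abs{\tau(\varrho_i\sigma_i(f)-f)}<\eps_i,\quad f\in\mathcal F_i.$$

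Next I would verify the remaining hypotheses of Theorem \ref{WTA} for the class $\mathcal S=\mathcal C_0$ and the algebra $A\otimes Q$. The class $\mathcal C_0$ contains all finite-dimensional C*-algebras, is closed under direct sums and under tensoring with finite-dimensional C*-algebras, and each Elliott--Thomsen algebra with trivial $\Kone$-group is finitely presented with weakly stable relations (see \cite{ENST-ASH}, or alternatively \cite{ET-PL}). The tracial state space of $A\otimes Q$ is nonempty because simple unital locally ASH algebras are stably finite. Finally, $A\otimes Q$ has finite nuclear dimension: it is simple and absorbs $\mathcal Z$ (because it absorbs $Q$), so the result of \cite{ENST-ASH} applies. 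The approximate multiplicativity on the dense sequence $(\mathcal F_i)$ and the fact that each $\sigma_i$ is unital together imply that the induced map $\bar\sigma:A\otimes Q\to\prod_i B_i/\bigoplus_i B_i$ is a unital homomorphism, and condition (5) of Theorem \ref{WTA} is precisely the tracial estimate above.

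Theorem \ref{WTA} therefore yields $(A\otimes Q)\otimes Q\in\mathrm{TA}\mathcal C_0$, and since $Q\otimes Q\cong Q$ this gives $A\otimes Q\in\mathrm{TA}\mathcal C_0$, as desired. For the ``in particular'' clause, assume $A\cong A\otimes\mathcal Z$. Then $A$ is separable, simple, unital, nuclear (as a locally type-I algebra, nuclearity being inherited through local approximation), and satisfies the UCT (as an inductive limit along the local approximation of type-I, hence UCT, subalgebras); combining this with $A\otimes Q\in\mathrm{TA}\mathcal C_0$, the classification theorem quoted above (Corollary 28.7 of \cite{GLN-TAS}) shows that $A$ is classified by its Elliott invariant. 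The converse is routine: algebras classified by the Elliott invariant and of the stated form are in particular Jiang--Su stable.

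The principal obstacle I anticipate is the finite nuclear dimension step: \cite{ENST-ASH} is phrased for ASH rather than locally ASH algebras, so one must verify (or invoke a permanence property showing) that the argument there goes through for algebras merely locally approximated by ASH subalgebras once $\mathcal Z$-stability is present. Once this is in hand, everything else is a straightforward assembly of the ingredients already developed earlier in the paper.
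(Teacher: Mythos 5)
Your proposal is correct and takes essentially the same route as the paper: apply Theorem~\ref{TFT} to $A\otimes Q$ (valid since $(A\otimes Q)\otimes Q\cong A\otimes Q$), feed the resulting maps into Winter's Theorem~\ref{WTA}, use $Q\otimes Q\cong Q$ to drop the extra copy of $Q$, and then invoke the Gong--Lin--Niu classification theorem. The paper's own proof is just a terse version of this assembly; you have spelled out the verification of the hypotheses of Theorem~\ref{WTA} (the class $\mathcal C_0$ has the required permanence properties, the asymptotic multiplicativity of the $\sigma_i$ on the increasing dense sequence $\mathcal F_i$ gives that $\bar\sigma$ is a homomorphism, etc.), which the paper leaves implicit. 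Concerning the one obstacle you flag: Theorem 3.1 of \cite{ENST-ASH} is in fact stated for simple separable unital \emph{locally} subhomogeneous $\mathcal Z$-stable algebras, which is precisely the locally ASH case needed here, so that step goes through as the paper asserts and no additional permanence argument is required.
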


\begin{proof}
By Theorem 3.1 of \cite{ENST-ASH}, one has $\mathrm{dr}(A\otimes Q)\leq 2$, and in particular, $\mathrm{dim}_{\mathrm{nuc}} (A\otimes Q)\leq 2<+\infty$. It then follows from Theorems \ref{TFT} and \ref{WTA} that $A\otimes Q\in\mathrm{TA}\mathcal C_0$. By the classification theorem of \cite{GLN-TAS} (based in particular on the deformation technique of \cite{Winter-Z} and \cite{Lin-App}---see also \cite{L-N}), the C*-algebra $A$ is classifiable. 
\end{proof}

\begin{cor}
Let $A$ be a simple separable unital locally ASH (respectively, locally AH) algebra. Then $A\otimes\mathcal Z$ is an ASH (respectively, AH) algebra. 
\end{cor}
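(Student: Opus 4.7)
The plan is to combine the classification obtained in Theorem \ref{classification} with standard range-of-invariant results for ASH (respectively AH) algebras. First, since $\mathcal{Z}$ is a simple inductive limit of prime dimension drop algebras (which are subhomogeneous), the algebra $A\otimes\mathcal{Z}$ is simple, separable, unital, Jiang-Su stable, and locally ASH; if $A$ is in addition locally AH, then $A\otimes\mathcal{Z}$ is locally AH. Hence Theorem \ref{classification} applies and $A\otimes\mathcal{Z}$ is classifiable by its Elliott invariant $\mathrm{Ell}(A\otimes\mathcal{Z})$.

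Next, I would realize $\mathrm{Ell}(A\otimes\mathcal{Z})$ by a concrete Jiang-Su stable simple unital ASH algebra $B$. The argument in the proof of Theorem \ref{TFT} already produces, via Theorem~5.2.2 of \cite{point-line}, an inductive limit of Elliott--Thomsen algebras realizing the $(K_0,K_0^+,[1],T,r)$-data and carrying trivial $K_1$. To accommodate $K_1(A\otimes\mathcal{Z})$ one appends appropriate one-dimensional noncommutative building blocks (for instance matrix algebras over $C(\mathbb{T})$ and related subhomogeneous pieces), obtaining an ASH algebra $B$ with $\mathrm{Ell}(B)\cong\mathrm{Ell}(A\otimes\mathcal{Z})$. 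Replacing $B$ by $B\otimes\mathcal{Z}$ (still ASH, since $\mathcal{Z}$ is an inductive limit of subhomogeneous algebras) makes $B$ Jiang-Su stable without changing its Elliott invariant. Theorem~\ref{classification} then yields $A\otimes\mathcal{Z}\cong B$, so $A\otimes\mathcal{Z}$ is ASH.

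For the AH case the same strategy applies, but one invokes the Elliott--Gong--Li range theorem for simple AH algebras. The extra hypothesis to verify is Riesz interpolation in $K_0(A\otimes\mathcal{Z})$. This is inherited from $A$: since $A$ is locally AH and each AH local approximant has Riesz interpolation in its own $K_0$, any finite interpolation problem with data in $K_0(A)$ can be pulled back to (and solved inside) such a local approximant, so $K_0(A)$---and hence $K_0(A\otimes\mathcal{Z})$---has Riesz interpolation. One then obtains an AH model $B'$ with $\mathrm{Ell}(B')\cong\mathrm{Ell}(A\otimes\mathcal{Z})$, tensors with $\mathcal{Z}$ to ensure Jiang-Su stability (AH is preserved since $\mathcal{Z}$ is AH by Jiang--Su), and concludes $A\otimes\mathcal{Z}\cong B'$ by classification.

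The main obstacle I anticipate is the bookkeeping required to enlarge the Elliott--Thomsen range result to cover arbitrary (countable) $K_1$-groups while keeping the model algebra ASH, and, in the AH case, the verification that Riesz interpolation descends from local AH approximants to the whole algebra in a uniform way---once these are in hand, everything else reduces to invoking the already-established classification.
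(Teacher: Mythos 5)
Your high-level strategy---apply Theorem~\ref{classification} to conclude that $A\otimes\mathcal Z$ is classifiable, produce an ASH (resp.\ AH) model algebra realizing the Elliott invariant, and identify---is the same as the paper's. The execution, however, is more elaborate than necessary and has one genuine gap.

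The paper's proof simply cites the range-of-invariant theorems directly: Elliott's range theorem \cite{point-line} together with \cite{GJS-Z} (finiteness implies stable finiteness) for the ASH case, and Villadsen's range theorem \cite{Vill-EHS} for the AH case, noting that both produce models with \emph{no dimension growth}, hence automatically Jiang-Su stable by Winter's theorem. Your proposal reinvents the ASH range construction by hand (augmenting the $\mathcal C_0$ realization with $K_1$ data and then tensoring with $\mathcal Z$). This is avoidable: the ASH range theorem already accommodates arbitrary $K_1$, and choosing a no-dimension-growth model dispenses with the $\mathcal Z$-tensoring step. In particular, your appeal to ``$\mathcal Z$ is AH'' is an unneeded nontrivial fact (Jiang and Su's original construction of $\mathcal Z$ is as an ASH, not an AH, inductive limit; that $\mathcal Z$ is AH is itself a consequence of classification and range theorems of the kind being invoked here).

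The genuine gap is your Riesz interpolation argument in the AH case. You claim that interpolation problems in $K_0(A)$ can be ``pulled back to and solved inside'' a local AH approximant. Two things go wrong. First, local approximation by subalgebras does not give an inductive-limit description of $K_0(A)$: the order on $K_0(A)$ restricted to the image of $K_0(A_n)$ may be strictly coarser than the intrinsic order on $K_0(A_n)$, so the interpolation problem in $K_0(A)$ need not reappear as a valid interpolation problem in any $K_0(A_n)$. Second, it is not true that every AH algebra has $K_0$ satisfying Riesz interpolation---this is known for AH algebras with slow or no dimension growth, but the local approximants of a locally AH algebra carry no dimension bound. The paper sidesteps this entirely by asserting, as part of the citation of \cite{Vill-EHS}, that the Elliott invariant of a Jiang-Su stable simple unital locally AH algebra lies in the AH range; the required Riesz property ultimately comes from the $\mathrm{TA}\mathcal C_0$ structure established in Theorem~\ref{classification}, not from the local approximants directly.
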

\begin{proof}
By Theorem \ref{classification}, the C*-algebra $A\otimes\mathcal Z$ is classifiable by means of the Elliott invariant. By \cite{point-line} and \cite{GJS-Z}, the Elliott invariant for separable, Jiang-Su stable, simple, unital, finite C*-algebras (in particular, locally ASH algebras) is exhausted by ASH algebras (by Theorem 3 of \cite{GJS-Z}, finiteness implies stable finiteness in this setting). Furthermore, by \cite{Vill-EHS}, the Elliott invariant for separable, Jiang-Su stable, simple, unital, locally AH algebras is exhausted by AH algebras. (In both settings, the models have no dimension growth.)
\end{proof}

The classification of locally ASH algebras (Theorem \ref{classification}) in fact allow us to recover the recent classification result for the C*-algebra of a minimal homeomorphism---assumed to have mean dimension zero but not to be uniquely ergodic (\cite{Karen-sphere}, \cite{Lin-Dym}---the uniquely ergodic case was dealt with in \cite{EN-MD0}, or in \cite{TW-Dym} on the ease the space is finite dimensional): 
\begin{cor}[Corollary 5.3 of \cite{Lin-Dym}]
Let $X$ be a compact metrizable space, and let $\sigma: X\to X$ be a minimal homeomorphism. Then the C*-algebra $(\mathrm{C}(X)\rtimes_\sigma\Int)\otimes\mathcal Z$ is classifiable. In particular, if $(X, \sigma)$ has mean dimension zero, the C*-algebra $\mathrm{C}(X)\rtimes_\sigma\Int$ is classifiable.
\end{cor}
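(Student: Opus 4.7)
The plan is to reduce the corollary to Theorem~\ref{classification} by verifying that the crossed product is locally ASH, and then to handle the ``in particular'' part by invoking a separate $\mathcal Z$-stability result for minimal systems of mean dimension zero.

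First I would record the basic properties of $A:=\mathrm{C}(X)\rtimes_\sigma\Int$: it is unital (as $X$ is compact), separable (as $X$ is metrizable), nuclear, and satisfies the UCT, being a crossed product of a commutative C*-algebra by $\Int$. Minimality of $\sigma$ (together with the trivial case $\#X=1$ being vacuous) implies that $A$ is simple. Thus the only hypothesis of Theorem~\ref{classification} that requires work is the ``locally ASH'' assumption.

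The main step is to show that $A$ is locally ASH. For this I would use the Putnam--Lin--Phillips orbit-breaking subalgebra construction: given $x\in X$, write $u$ for the canonical unitary implementing $\sigma$ and set
$$A_x:=\mathrm{C}^*(\mathrm{C}(X),\, u\,\mathrm{C}_0(X\setminus\{x\}))\subseteq A.$$
A standard calculation shows that $A_x$ is a recursive subhomogeneous C*-algebra (built inductively over ``Rokhlin towers'' based at $x$), hence subhomogeneous, hence in particular ASH. Given a finite set $\mathcal F\subseteq A$ and $\eps>0$, I would first approximate each element of $\mathcal F$ by a finite Fourier sum $\sum_{k=-N}^{N}f_k u^k$ with $f_k\in\mathrm{C}(X)$. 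For such a sum, a standard cutoff-and-partition-of-unity argument together with minimality of $\sigma$ produces, for a suitable choice of $x\in X$, a $\eps$-approximation inside $A_x$: concretely, $u f_k$ is approximated by $u(f_k\psi)$, where $\psi$ is a continuous function supported away from $\{x,\sigma^{-1}(x),\dots,\sigma^{-(k-1)}(x)\}$ and equal to $1$ on a large enough compact set. This shows that $A$ is locally approximated by the subhomogeneous subalgebras $A_x$, so $A$ is locally ASH. Applying Theorem~\ref{classification} then yields that $A\otimes\mathcal Z$ is classifiable by the naive Elliott invariant, which is the first assertion.

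For the ``in particular'' statement, I would invoke the known fact that mean dimension zero for a minimal system $(X,\sigma)$ forces the crossed product $A$ to be Jiang--Su stable (this is the content of \cite{EN-MD0} when the system is uniquely ergodic, of \cite{Karen-sphere} and \cite{TW-Dym} in further special cases, and in full generality follows from the results underlying \cite{Lin-Dym}). Given $A\cong A\otimes\mathcal Z$, the first assertion immediately gives that $A$ itself is classifiable.

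The delicate step is establishing local ASH structure, and within it the verification that the orbit-breaking subalgebras $A_x$ are genuinely (recursive) subhomogeneous when $X$ has no finite-dimensionality assumption, and that the cutoff-and-partition-of-unity approximation works uniformly across all finite Fourier sums occurring in $\mathcal F$. Everything else (simplicity, nuclearity, UCT, $\mathcal Z$-stability under mean dimension zero) is either elementary or a direct citation.
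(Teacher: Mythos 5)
Your reduction to Theorem~\ref{classification} and your handling of the ``in particular'' clause are the right moves, and your list of elementary properties of $A := \mathrm{C}(X)\rtimes_\sigma\Int$ (simple, separable, unital, nuclear, UCT) is correct. But the crucial step---showing that $A$ itself is locally ASH by approximating finite sets inside a single orbit-breaking subalgebra $A_x$---does not work, and it is not the route the paper takes.

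The obstruction is concrete: the canonical unitary $u$ satisfies $\mathrm{dist}(u, A_x)\geq 1$ for every $x\in X$. Indeed, let $E:A\to\mathrm{C}(X)$ be the canonical contractive conditional expectation, and consider the contraction $a\mapsto E(au^{-1})$ picking out the coefficient of $u$ in the Fourier expansion. For $a=u$ this returns the constant function $1$; for any $b\in A_x$, the coefficient $E(bu^{-1})$ vanishes at a fixed point of the orbit of $x$ (this is built into the definition of $A_x$ and persists under products, adjoints, sums, and norm limits). Hence $\norm{u-b}\geq \norm{E((u-b)u^{-1})}\geq 1$. Your own cutoff formula exhibits the problem: taking $f_k=1$ (i.e., trying to approximate $u$ itself), the candidate $u\psi$ has error $\norm{1-\psi}_\infty=1$ because $\psi$ vanishes near $x$. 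So the claim that an arbitrary finite set of Fourier polynomials can be $\eps$-captured inside a well-chosen $A_x$ is false whenever the set involves positive powers of $u$ nontrivially, which is generic.

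What the paper actually does is cite Theorem~4.1 of \cite{ENST-ASH}, which establishes the weaker but sufficient statement that $A\otimes Q$ is locally ASH. The tensor factor $Q$ is essential: the proof there runs a Berg's-technique/``two towers'' argument using a pair of orbit-breaking subalgebras at points on distinct orbits, and rotating a corner of one tower onto the other requires the extra room provided by $Q$-absorption---precisely the room that is missing in a single $A_x$. One then applies Theorem~\ref{classification} to $A\otimes Q$ (which is automatically $Q$-stable, hence $\mathcal Z$-stable) to get $A\otimes Q\in\mathrm{TA}\mathcal C_0$, and the $\mathrm{TA}\mathcal C_0$ classification theorem classifies $A\otimes\mathcal Z$. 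Your plan therefore needs to be routed through $A\otimes Q$ and the cited ENST-ASH result rather than through a direct claim that $A$ is locally ASH via the $A_x$. The mean-dimension-zero step is fine as you stated it (the paper cites \cite{EN-MD0} for $A\cong A\otimes\mathcal Z$).
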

\begin{proof}
By Theorem 4.1 of \cite{ENST-ASH}, the C*-algebra $(\mathrm{C}(X)\rtimes_\sigma\Int)\otimes Q$ is locally ASH. By Theorem \ref{classification}, the C*-algebra $(\mathrm{C}(X)\rtimes_\sigma\Int)\otimes Q$ belongs to the class $\mathrm{TA}\mathcal C_0$, and hence the C*-algebra $(\mathrm{C}(X)\rtimes_\sigma\Int)\otimes\mathcal Z$ is classifiable. 

If $(X, \sigma)$ has mean dimension zero, then it follows from \cite{EN-MD0} that $$\mathrm{C}(X)\rtimes_\sigma\Int\cong (\mathrm{C}(X)\rtimes_\sigma\Int)\otimes\mathcal Z,$$
and so the C*-algebra $\mathrm{C}(X)\rtimes_\sigma\Int$ is classifiable.
\end{proof}

\bibliographystyle{plainurl}

\end{document}